\crefname{equation}{}{}
\Crefname{equation}{Equation}{Equations}
\crefname{lemma}{Lemma}{Lemmas}
\crefname{theorem}{Theorem}{Theorems}
\crefname{corollary}{Corollary}{Corollaries}
\crefname{figure}{Figure}{Figures}
\Crefname{figure}{Figure}{Figures}
\crefname{table}{Table}{Tables}
\Crefname{table}{Table}{Tables}
\DeclareMathOperator{\vcurl}{\mathbf{curl}}
\DeclareMathOperator{\grad}{\mathbf{grad}}
\DeclareMathOperator{\dive}{div}
\DeclareMathOperator{\spann}{span}
\newcommand{\bdd}[1]{ \boldsymbol{#1} }
\newcommand{\unitvec}[1]{\bdd{#1}}
\newcommand{\vertex}[1]{\bdd{{#1}}}
\newcommand{\tensorbd}[1]{\mathbf{{#1}}}
\newcommand{\symgrad}[1]{\tensorbd{D}{#1}}
\newcommand{\iseminormsup}[4]{\tensor*[_{#1}]{ \left|#2 \right| }{_{#3}^{#4} } }
\newcommand{\iseminorm}[3]{\iseminormsup{#1}{#2}{#3}{}}
\newcommand{\reftri}{\hat{T}}
\renewcommand{\d}[1]{\,\mathrm{d}{#1}}
\let\orgdescriptionlabel\descriptionlabel
\renewcommand*{\descriptionlabel}[1]{%
	\let\orglabel\label
	\let\label\@gobble
	\phantomsection
	\edef\@currentlabel{#1}%
	\let\label\orglabel
	\orgdescriptionlabel{#1}%
}
\newcommand{\newinf}{\mathop{\inf\vphantom{\sup}}}
\newcommand{\vertiii}[1]{%
	{\left\vert\kern-0.25ex\left\vert\kern-0.25ex\left\vert #1 
		\right\vert\kern-0.25ex\right\vert\kern-0.25ex\right\vert}}
\definecolor{fourth}{RGB}{230, 159, 0}
\definecolor{fifth}{RGB}{86, 180, 233}
\definecolor{sixth}{RGB}{0, 158, 115}
\definecolor{pver}{RGB}{204, 121, 167}
\newtheorem{theorem}{Theorem}[section]
\newtheorem{lemma}[theorem]{Lemma}
\newtheorem{corollary}[theorem]{Corollary}
\theoremstyle{definition}
\theoremstyle{remark}
\newtheorem{remark}[theorem]{Remark}
\numberwithin{equation}{section}
\begin{document}
	

\title[Scott-Vogelius Stability]{Stability of high-order Scott-Vogelius elements 
for 2D non-Newtonian incompressible flow}


\author{Charles Parker}
\address{Ridgway Scott Foundation, 2544 S Forest Ln. 
	Cedarville, MI 49719}
\curraddr{}
\email{charles\_parker@alumni.brown.edu}
\thanks{This material is based upon work supported by 
	the National Science Foundation under Award No.DMS-2201487 
	and by the Mary Wheeler Fellowship from the Ridgway Scott Foundation.}

\author{Endre S\"{u}li}
\address{Mathematical Institute, 
	University of Oxford, Andrew Wiles Building, Woodstock Road, Oxford OX2 6GG, 
	UK}
\curraddr{}
\email{suli@maths.ox.ac.uk}
\thanks{}

\subjclass[2020]{Primary 65N30, 65N12, 76A05}

\date{}

\dedicatory{}

\begin{abstract}
	We consider the stability of high-order Scott-Vogelius elements for 2D  
	non-Newtonian incompressible flow problems. For elements of degree 4 or 
	higher, we construct a right-inverse of the divergence operator that is 
	stable uniformly in the polynomial degree $N$ from $L^p$ to 
	$\boldsymbol{W}^{1,p}$, show that the associated inf-sup constant is 
	bounded below by a constant that decays at worst like $N^{-3\left| 
		\frac{1}{2} - \frac{1}{p}\right|}$, and 
	construct local Fortin operators with stability constants explicit in the
	polynomial degree. We demonstrate these results with several numerical 
	examples suggesting that the $p$-version method can offer superior 
	convergence rates over the $h$-version method even in the non-Newtonian 
	setting.
\end{abstract}

\maketitle


\section{Introduction}

Non-Newtonian fluids, particularly those with implicit constitutive laws 
\cite{Rajagopal03,Rajagopal06,Rajagopal08}, arise in a variety of engineering, 
biological, and industrial 
applications including slurries (Bingham) in drilling engineering, 
synovial fluid (shear-thinning) in animal joints, and adhesives (viscoelastic) 
in manufacturing. The mathematical analysis 
\cite{BLM25,BMM23,BPSS22,SuliDebiec23,SuliDebiec25,Gazca24,Gazca22,Kaltenbach23b,%
	Kaltenbach23a} 
of these models for incompressible fluids and the numerical analysis of 
corresponding finite element methods
\cite{Barrenechea23,Berselli24,Diening25,FarSuli22,Gazca23,Gazca25,%
	HeidSuliBing22,Heid22,Jessberger24,Kaltenbach23i,Kaltenbach23ii,%
	Kaltenbach23iii}
have received much attention recently. These finite element methods roughly 
fall into two categories: those that produce exactly (pointwise) 
divergence-free velocity approximations and those that do not. While many of 
the standard classical methods fall into the latter category, exactly 
divergence-free methods, typically in the Newtonian setting, have also been an 
area of active research; see e.g. \cite{JohnLinkeMerdonNeilReb17,Neilan20} for 
reviews.

Exactly divergence-free finite element discretizations offer a number of 
benefits over standard finite elements. 
By construction, the numerical approximation more accurately respects 
the underlying physics in the sense that mass is conserved pointwise. 
Moreover, for Newtonian fluids, these schemes are pressure robust 
\cite{JohnLinkeMerdonNeilReb17}, meaning that the velocity error is 
independent of the pressure error and is quasi-optimal.
Properly quantifying pressure robustness in the non-Newtonian setting
and showing that pressure robust schemes for Newtonian fluids remain 
pressure robust in non-Newtonian setting is an ongoing area of research. 
Recent results \cite{Diening25} show that some particular divergence-free 
finite element schemes based on the lowest-order Crouzeix-Raviart element are 
also pressure robust for the $p$-Stokes equation.

Divergence-free elements offer an additional benefit in the non-Newtonian 
setting related to the stabilization of convective terms. We 
briefly summarize the discussion in \cite[\S 3.2 \& 3.3]{Diening13}. For 
concreteness, consider the following $p$-Navier-Stokes problem for a power-law 
fluid:
\begin{subequations}
	\label{eq:p-navier-stokes}
	\begin{align}
		-\dive \tensorbd{S}(\symgrad{\bdd{u}}) + \dive(\bdd{u} \otimes 
		\bdd{u}) + \grad q &= \bdd{f} \qquad & 
		&\text{in } \Omega, \\
		-\dive \bdd{u} &= 0 \qquad & &\text{in } \Omega, \\
		\bdd{u} &= \bdd{0} \qquad & &\text{on } \partial \Omega,
	\end{align}
\end{subequations}
where $\Omega \subset \mathbb{R}^d$, $d \in \{2,3\}$, is a bounded, Lipschitz, 
polyhedral domain, $\symgrad{\bdd{v}}$ is the symmetric part of the 
gradient of $\bdd{v}$, and for fixed $p \in (1,\infty)$, $\tensorbd{S}$ is the 
shear stress defined by
\begin{align}
	\label{eq:extra-stress-tensor}
	\tensorbd{S}(\tensorbd{A}) := \nu |\tensorbd{A}|^{p-2} 
	\tensorbd{A}, \qquad \forall \tensorbd{A} \in \mathbb{R}^{d \times d},
\end{align}
where $\nu > 0$ is constant.
For an appropriate choice of Sobolev spaces, problem \cref{eq:p-navier-stokes} 
has a weak solution provided that $p > \frac{2d}{d+2}$ \cite{BGMWG09}. 
Typically, the weak form of the convective term $\dive(\bdd{u} \otimes 
\bdd{u})$ 
is rewritten as
\begin{align}
	\label{eq:convective-weak}
	\frac{1}{2} \left[ (\bdd{u} \otimes \bdd{v}, \grad \bdd{u} ) - ( \bdd{u} 
	\otimes \bdd{u}, \grad \bdd{v} ) \right],
\end{align}
where $(\cdot,\cdot)$ denotes the $L^2(\Omega)$-inner product, to ensure 
that discretizations inherit the skew-symmetry of the convective term (see 
e.g. \cite{Temam77}). 
For finite elements that do not deliver divergence-free velocities, the 
trilinear form \cref{eq:convective-weak} is only bounded in the proper norms 
uniformly in the discretization parameters provided that $p > \frac{2d}{d+1}$. 
Consequently, existing results for weak convergence for these methods require 
$p > \frac{2d}{d+1}$, while for divergence-free elements, 
one can prove weak convergence over the full range $p > \frac{2d}{d+2}$ for 
which the continuous problem is well-posed \cite[Theorem 19]{Diening13}. See, 
however, \cite{Barrenechea23}, where weak convergence of a stabilized finite 
element method has been shown for the full range of $p > \frac{2d}{d+2}$ for 
which weak solutions are known to exist. The 
existence of fluids for which $p \in \left(  \frac{2d}{d+2}, 
\frac{2d}{d+1}\right]$, e.g. shear-thinning nanofluids, which play a key role 
in the heat transfer in mirco-structures \cite{Gonzalez21}, shows that
the construction of methods that converge in the full range $p > 
\frac{2d}{d+2}$ is of practical importance.	

We focus on high-order 2D Scott-Vogelius elements \cite{ScottVog85}, which 
deliver exactly exactly divergence-free velocities. In 
particular, the velocity space consists of piecewise polynomials of degree $N 
\geq 1$ and the pressure space is taken to be the divergence of this space:
\begin{align}
	\label{eq:sv-def-nobc}
	\bdd{V}^N := \{ \bdd{v} \in C(\bar{\Omega})^2 : \bdd{v}|_{K} \in 
	\mathcal{P}_N(K)^2 \ \forall K \in \mathcal{T} \} 
	\quad \text{and} \quad 
	Q^{N-1} := \dive \bdd{V}^N,
\end{align}
where $\mathcal{T}$ is a triangulation of $\Omega \subset \mathbb{R}^2$ and 
$\mathcal{P}_N(K)$ denotes the space of polynomials of degree at most $N$. A 
local characterization of $Q^{N-1}$ is only available on 
general meshes if $N \geq 4$ \cite{ScottVog85,Vogelius83le}, which is the case 
we consider here. For meshes with 
special structure, local characterizations are available for lower degrees 
\cite{Qin92} e.g. on barycentrically refined meshes for $N \geq 2$  
\cite[Theorems 4.6.1 \& 6.4.1]{Qin92}. 

The stability of the discretization hinges on the behavior of the inf-sup 
constant
\begin{align}
	\label{eq:inf-sup-sv-nobc-def}
	\beta_{N, p} := \newinf_{ q \in Q^{N-1} } \sup_{ \bdd{v} \in 
		\bdd{V}^N } \frac{ (\dive \bdd{v}, q) }{ \|\bdd{v}\|_{1,p,\Omega} 
		\|q\|_{p',\Omega}  },
\end{align}
where $p' := p/(p-1)$ is the H\"{o}lder conjugate of $p$ and $\| \cdot 
\|_{1,p,\Omega}$ and $\|\cdot\|_{p', \Omega}$ denote the standard norms on the 
Sobolev space $W^{1,p}(\Omega)$ and Lebesgue space $L^{p'}(\Omega)$. 
For the power-law fluid in \cref{eq:p-navier-stokes}, $p$ is the same as the 
index in the shear stress \cref{eq:extra-stress-tensor}; for a general 
non-Newtonian 	fluid, the value of $p$ depends on the constitutive law. In 
the Newtonian setting 
($p=2$), $\beta_{N, 2}$ is bounded below uniformly in the mesh size and 
polynomial degree provided that $N \geq 4$ \cite{AinCP21LE}. Under the same 
condition, the Scott-Vogelius elements are also stable with respect to the 
mesh size in the non-Newtonian setting: $\beta_{N, p}$ is bounded below 
uniformly in the mesh size for $p \in (1, \infty)$ \cite[Lemma A.4 \& Lemma 
A.11]{Tscherpel18}. However, the behavior of $\beta_{N, p}$ with respect to the
polynomial degree $N$ for $p \neq 2$ has not previously been addressed for any 
finite element pair. 

In this work, we establish three novel results regarding the stability of 
high-order Scott-Vogelius elements on general meshes in the non-Newtonian 
setting. First, we show that for $N \geq 4$, there exists a right-inverse of 
the divergence operator $\mathcal{L}_{\dive, N} : Q^{N-1} \to \bdd{V}^N$ that 
is stable uniformly in the polynomial degree from $L^p(\Omega)$ to 
$\bdd{W}^{1, 
	p}(\Omega)$. Second, we show that the inf-sup constant 
$\beta_{N, p}$ is bounded below by a constant the decays like $N^{-3\left| 
	\frac{1}{2} - \frac{1}{p} \right|}$. Third, we construct local Fortin 
operators, which are ubiquitous in the numerical analysis of finite element 
methods for non-Newtonian flows, with stability constants explicit in the
polynomial degree. 
We believe these results to be the first with stability constants explicit 
in the 
polynomial degree for any finite element method for non-Newtonian flow, and 
they are foundational in understanding the convergence behavior of $p$- and 
$hp$-finite element methods for non-Newtonian flows.\footnote{Note that the 
	``$p$" in ``$p$- and $hp$-finite element methods" and ``$p$-version" 
	refers to the polynomial 
	degree of the spaces, which is ``$N$" in our notation, and clashes with our 
	notation for ``$L^p$" spaces. In the remainder of the manuscript, we 
	continue 
	to use the standard nomenclature ``$p$- and $hp$-finite element methods" and 
	``$p$-version" without confusion.} We also observe in some numerical 
examples in \cref{sec:p-stokes-numerics} that the $p$-version method can 
offer superior convergence rates over the $h$-version method for some 
smooth and nonsmooth solutions. 

The remainder of the paper is organized as follows. In \cref{sec:sv-review}, 
we review existing results for high-order Scott-Vogelius elements. The main 
contributions are summarized in \cref{sec:main-results} and applied to 
the $p$-Stokes system in \cref{sec:p-stokes-app}, where numerical results 
highlight several gaps in the existing theory. Each of the final three 
sections develops the theory for each main result: the right-inverse of the 
divergence operator $\mathcal{L}_{\dive, N}$ is constructed in 
\cref{sec:invert-div}, a lower bound for the inf-sup constant $\beta_{N, p}$ 
is proved in \cref{sec:lp-stability-l2-projection}, and the local Fortin 
operators are constructed in \cref{sec:local-fortin}.

\section{Scott-Vogelius FEM discretization}
\label{sec:sv-review}

We begin with a review of the Scott-Vogelius element \cite{ScottVog85} and its 
stability properties. Let $\Omega \subset \mathbb{R}^2$ be a polygonal domain 
with $\Gamma_0 \subset \partial \Omega$ consisting of a nonzero finite number 
of open 
segments of $\partial \Omega$ and $\Gamma_1 := \partial \Omega \setminus 
\Gamma_0$. For $p \in [1,\infty]$ and $s \in (0, \infty]$,
let $W^{s, p}(\Omega)$ denote the usual Sobolev spaces with norm 
$\|\cdot\|_{s,p,\Omega}$ and let $p':= p/(p-1)$ denote the H\"{o}lder 
conjugate of $p$. For $s = 0$, in which $W^{0,p}(\Omega) = L^p(\Omega)$, we 
also denote the norm by $\|\cdot\|_{p, \Omega}$. Spaces of vector fields are 
denoted by $\bdd{W}^{s,p}(\Omega)$, while spaces of tensors are denoted by 
$\tensorbd{W}^{s,p}(\Omega)$. The velocity space with zero trace on $\Gamma_0$ 
and the corresponding pressure space are given by
\begin{align}
	\bdd{W}^{1,p}_{\Gamma}(\Omega) &:= \{ \bdd{v} \in 
	\bdd{W}^{1,p}(\Omega) : 
	\bdd{v}|_{\Gamma_0} = \bdd{0} \} \\
	L^{p'}_{\Gamma}(\Omega) &:= \begin{cases}
		\{ q \in L^{p'}(\Omega) : (q, 1) = 0 \} & \text{ if 
		} |\Gamma_1| = 0, \\
		L^{p'}(\Omega) & \text{otherwise}.
	\end{cases}
\end{align}	
Note that the pressure space is mean-free in the case of enclosed flow 
($|\Gamma_1| = |\partial \Omega|$).

Let $\mathcal{T}$ denote a shape-regular triangulation of $\Omega$ satisfying 
the usual assumptions \cite[p. 38]{Cia02}. Additionally, we 
assume that the interior of every boundary edge of $\mathcal{T}$ lies either 
on $\Gamma_0$ or $\Gamma_1$. For $N \geq 1$, the Scott-Vogelius velocity and 
pressure space in the absence of boundary conditions are given by 
\cref{eq:sv-def-nobc}. The corresponding spaces with boundary conditions are
\begin{align}
	\label{eq:sv-def-bc}
	\bdd{V}^N_{\Gamma} &:= \bdd{V}^N \cap \bdd{W}^{1, 1}_{\Gamma}(\Omega)
	\quad \text{and} \quad
	Q^{N-1}_{\Gamma} := \dive \bdd{V}^N_{\Gamma}.
\end{align}

\subsection{Characterization of the pressure space}

The space $Q^{N-1}_{\Gamma}$ has a more explicit characterization on general 
meshes if the degree is sufficiently high ($N \geq 4$) 
\cite{AinCP21LE,ScottVog85}. We first note that for any $N \geq 1$, 
$Q^{N-1}_{\Gamma}$ is a subspace of (discontinuous) piecewise polynomials; i.e.
\begin{align}
	\label{eq:q-subspace-dg}
	Q^{N-1}_{\Gamma} \subseteq DG^{N-1}(\mathcal{T}) \cap L^1_{\Gamma}(\Omega),
\end{align}
where, given a collection of elements $\mathcal{U} \subseteq \mathcal{T}$, the 
space $ DG^N(\mathcal{U})$ is defined as follows.
\begin{align}
	\label{eq:dg-definition}
	DG^N(\mathcal{U}) &:= \{ q \in L^1(\Omega) : q|_{K} \in \mathcal{P}_N(K) \ 
	\forall K \in \mathcal{U} \}.
\end{align}
The inclusions in \cref{eq:q-subspace-dg} can be strict provided the mesh 
contains so-called ``singular" vertices. To describe these vertices, we 
require some additional notation.

\begin{figure}[htb]
	\centering
	\begin{subfigure}[b]{0.45\linewidth}
		\centering
		\begin{tikzpicture}[scale=0.525]
			
			\coordinate (a) at (0, 0);
			\coordinate (a0) at (4, 0.2);
			\coordinate (a1) at (3.1, 2.9);
			\coordinate (a2) at (-2.1, 3.9);
			\coordinate (a3) at (-4.2, -2);
			\coordinate (a4) at (2.9, -3.2);				
			
			\filldraw (a) circle (2pt) node[align=center,below]{$\vertex{a}$}
			-- (a0) circle (2pt) 	
			-- (a1) circle (2pt) 
			-- (a);
			\filldraw (a) circle (2pt) node[align=center,below]{}	
			-- (a1) circle (2pt) 
			-- (a2) circle (2pt) 
			-- (a);
			\filldraw (a) circle (2pt) node[align=center,below]{}	
			-- (a2) circle (2pt) 
			-- (a3) circle (2pt) 
			-- (a);
			\filldraw (a) circle (2pt) node[align=center,below]{}	
			-- (a4) circle (2pt) 
			-- (a0) circle (2pt) 
			-- (a);
			\draw[dashed] (a3) -- (a4);
			\draw (2.8, 1.4) node(K0){$K_1$};
			\draw (1.3, 2.7) node(K1){$K_2$};
			\draw (-2, 1.5) node(K1){$K_3$};
			\draw (-0.1, -0.8) node(Kdots){$\ldots$};
			\draw (2.5, -1.8) node(Km){$K_m$};
			\pic["$\theta_1$"{anchor=west}, draw, angle radius=0.8cm, angle 
			eccentricity=1] {angle=a0--a--a1};
			\pic["$\theta_2$"{anchor=south}, draw, angle radius=1cm, angle 
			eccentricity=1] {angle=a1--a--a2};
			\pic["$\theta_3$"{anchor=east}, draw, angle radius=0.8cm, angle 
			eccentricity=1] {angle=a2--a--a3};
			\pic["$\theta_m$"{anchor=west}, draw, angle radius=1cm, angle 
			eccentricity=1] {angle=a4--a--a0};
			
		\end{tikzpicture}
		\caption{}
		\label{fig:internal schema}
	\end{subfigure}
	\hfill
	\begin{subfigure}[b]{0.5\linewidth}
		\centering
		\begin{tikzpicture}[scale=0.525]
			\coordinate(a) at (0, 0);
			\coordinate(a0) at (6, -0.2);
			\coordinate(a1) at (5, 2);
			\coordinate(a2) at (3.4, 4);
			\coordinate(a3) at (1, 4.6);
			\coordinate(a4) at (-1.9, 4);
			
			\filldraw (a) circle (2pt) node[align=center,below]{$\vertex{a}$}
			-- (a0) circle (2pt) node[align=center,below]{}	
			-- (a1) circle (2pt) node[align=center,above]{}
			-- (a);
			\filldraw (a) circle (2pt) node[align=center,below]{}	
			-- (a1) circle (2pt) node[align=center,above]{}
			-- (a2) circle (2pt) node[align=center,below]{}
			-- (a);
			\filldraw (a) circle (2pt) node[align=center,below]{}	
			-- (a2) circle (2pt) node[align=center,above]{};
			\filldraw (a3) circle (2pt) node[align=center,below]{}
			-- (a);
			\filldraw (a) circle (2pt) node[align=center,below]{}	
			-- (a3) circle (2pt) node[align=center,above]{}
			-- (a4) circle (2pt) node[align=center,below]{}
			-- (a);
			
			\draw[dashed] (a2) -- (a3);

			\coordinate (a12) at ($(a)!2/3!(a1)$);
			\coordinate (a121) at ($(a)!2/3-1/sqrt(29)!(a1)$);
			\coordinate (a1205) at ($(a)!2/3-0.5/sqrt(29)!(a1)$);
			
			\coordinate (a22) at ($(a)!2/3!(a2)$);
			\coordinate (a221) at ($(a)!2/3-1/sqrt(27.56)!(a2)$);
			\coordinate (a2205) at ($(a)!2/3-0.5/sqrt(27.56)!(a2)$);
			
			\coordinate (a32) at ($(a)!2/3!(a3)$);
			\coordinate (a321) at ($(a)!2/3-1/sqrt(22.16)!(a3)$);
			\coordinate (a3205) at ($(a)!2/3-0.5/sqrt(22.16)!(a3)$);
			
			\coordinate (a42) at ($(a)!2/3!(a4)$);
			\coordinate (a421) at ($(a)!2/3-1/sqrt(17)!(a4)$);
			\coordinate (a4205) at ($(a)!2/3-0.5/sqrt(17)!(a4)$);

			\pic["$\theta_1$"{anchor=west}, draw, angle radius=1cm, angle 
			eccentricity=1] {angle=a0--a--a1};
			\pic["$\theta_2$"{anchor=west}, draw, angle radius=1.3cm, angle 
			eccentricity=1] {angle=a1--a--a2};
			\pic["$\theta_m$"{anchor=south}, draw, angle radius=1cm, angle 
			eccentricity=1] {angle=a3--a--a4};
			
			\draw (4.75, 1) node(K0){$K_1$};
			\draw (3.75, 2.6) node(K1){$K_2$};
			\draw (1.9, 3.75) node(Kdots){$\ldots$};
			\draw (-0.75, 3.5) node(Km){$K_m$};
			
			\filldraw[thick] (0, 0) -- ($(a0) + ($(a)!1/sqrt(17)!(a0)$) $);
			\filldraw[thick] (0, 0) -- ($(a4) + ($(a)!1/sqrt(17)!(a4)$) $);
			
			\draw ($(a0) + ($(a)!0.5/sqrt(17)!(a0)$) $) 
			node[align=center,below](G0){$\partial \Omega$};
			\draw ($(a4) + ($(a)!0.5/sqrt(17)!(a4)$) $) 
			node[align=center,left](Gm1){$\partial \Omega$};
		\end{tikzpicture}		
		\caption{}
		\label{fig:boundary schema}
	\end{subfigure}
	\caption{Notation for mesh around (a) an internal vertex $\vertex{a}$ and 
		(b) a boundary vertex $\vertex{a}$, each abutting $m = 
		|\mathcal{T}_{\vertex{a}}|$ elements.}
	\label{fig:patch-schema}
\end{figure}
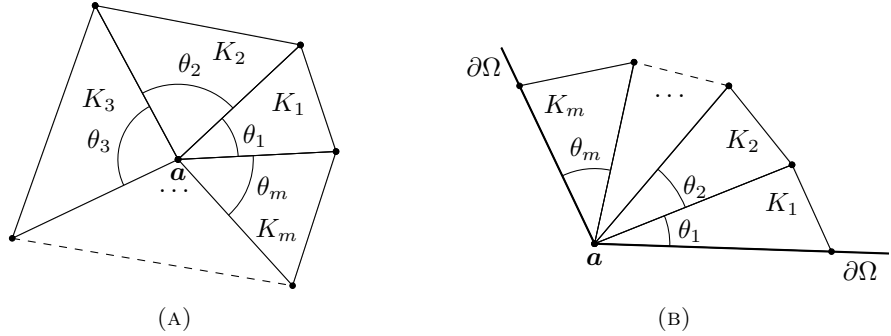

Let $\mathcal{V}$ denote the set of vertices of the mesh, partitioned into 
interior vertices $\mathcal{V}_I$ and boundary vertices $\mathcal{V}_B$. We 
partition $\mathcal{V}_B$ further into $\mathcal{V}_{B, 0}$ consisting of 
boundary vertices laying on the interior of $\Gamma_0$ and $\mathcal{V}_{B, 1} 
:= \mathcal{V}_B \setminus \mathcal{V}_{B, 0}$. Given a vertex $\vertex{a} \in 
\mathcal{V}$, we let $\mathcal{T}_{\vertex{a}}$ denote the set of elements 
abutting $\vertex{a}$ labeled as in \cref{fig:patch-schema}, and we define 
$\omega_{\vertex{a}} = \mathrm{int}(\cup_{K \in \mathcal{T}_{\vertex{a}}} 
\bar{K})$. A vertex $\vertex{a} \in \mathcal{V}$ is \textit{singular} if all 
element edges meeting at $\vertex{a}$ lie on two lines and $\vertex{a} \in 
\mathcal{V}_I \cup \mathcal{V}_{B, 0}$, and we denote by $\mathcal{V}_S$ the 
set of singular vertices; i.e.
\begin{align}
	\mathcal{V}_S := \{ \vertex{a} \in \mathcal{V}_I \cup \mathcal{V}_{B, 0} : 
	\theta_{i} + \theta_{i+1} = \pi, \ 1 \leq i \leq 
	|\mathcal{T}_{\vertex{a}}|-1 \}.
\end{align}
It is straightforward to see that if $\vertex{a} \in \mathcal{V}_S$, then 
$\vertex{a}$ abuts exactly four elements if $\vertex{a} \in \mathcal{V}_I$, 
otherwise $\vertex{a}$ abuts $1$, $2$, or $3$ elements if $\vertex{a} \in 
\mathcal{V}_{B, 0}$ (see \cref{fig:singular-vertices}). The space 
$Q^{N-1}_{\Gamma}$ has ``weak continuity" at singular vertices: For $q \in 
Q^{N-1}_{\Gamma}$, there holds
\begin{align}
	\label{eq:alternating-sum-condition}
	\mathcal{A}_{\vertex{a}}(q) := \sum_{i=1}^{|\mathcal{T}_{\vertex{a}}|} 
	(-1)^i q|_{K_i}(\vertex{a}) =  0 \qquad \forall \vertex{a} \in  
	\mathcal{V}_S.
\end{align}
Thanks to \cite[Theorem 4.1]{AinCP21LE}, condition 
\cref{eq:alternating-sum-condition} is the only compatibility condition 
satisfied by functions in $Q_{\Gamma}^{N-1}$ if $N \geq 4$.
\begin{lemma}
	For $N \geq 4$, there holds
	\begin{align}
		\label{eq:q-characterization}
		Q^{N-1}_{\Gamma} = \{ q \in DG^{N-1}(\mathcal{T}) \cap 
		L^1_{\Gamma}(\Omega) : \mathcal{A}_{\vertex{a}}(q) = 0 \ \forall 
		\vertex{a} \in \mathcal{V}_S \}.
	\end{align}
\end{lemma}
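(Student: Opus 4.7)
The plan is to establish the two inclusions separately. The direction $Q^{N-1}_\Gamma \subseteq \{\,\cdot\,\}$ is essentially mechanical. Given $q = \dive \bdd{v}$ with $\bdd{v} \in \bdd{V}^N_\Gamma$, the piecewise polynomial property is immediate, and the mean-value condition when $|\Gamma_1|=0$ follows from the divergence theorem and the boundary condition $\bdd{v}|_{\Gamma_0} = \bdd{0}$. For the alternating-sum condition at a singular vertex $\vertex{a}$, the plan is to rotate coordinates so that the two lines through $\vertex{a}$ align with coordinate axes. Writing $(\dive \bdd{v})|_{K_i}(\vertex{a}) = (\partial_1 v_1 + \partial_2 v_2)|_{K_i}(\vertex{a})$ and exploiting the fact that the tangential (and, along a straight edge, normal) derivative of $\bdd{v}$ is single-valued across each edge emanating from $\vertex{a}$, one sees that consecutive terms in $\sum_i (-1)^i (\dive \bdd{v})|_{K_i}(\vertex{a})$ cancel pairwise. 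The boundary-vertex case is treated analogously using $\bdd{v}|_{\Gamma_0} = \bdd{0}$, which supplies the missing directional derivative on the edges lying in $\Gamma_0$.

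The reverse inclusion is the substantive part and is where the hypothesis $N \geq 4$ enters. My plan is a dimension-counting argument: letting $M^{N-1}_\Gamma$ denote the right-hand side of \cref{eq:q-characterization}, the forward inclusion already gives $Q^{N-1}_\Gamma \subseteq M^{N-1}_\Gamma$, so it suffices to show $\dim Q^{N-1}_\Gamma \geq \dim M^{N-1}_\Gamma$. The dimension of $M^{N-1}_\Gamma$ is computed by showing that the $|\mathcal{V}_S|$ alternating-sum functionals (together with the global mean if $|\Gamma_1|=0$) are linearly independent on $DG^{N-1}(\mathcal{T})$, which is easy since each $\mathcal{A}_{\vertex{a}}$ is supported on a distinct vertex patch. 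The dimension of $Q^{N-1}_\Gamma$ is computed via rank-nullity, $\dim Q^{N-1}_\Gamma = \dim \bdd{V}^N_\Gamma - \dim(\ker \dive \cap \bdd{V}^N_\Gamma)$, together with the stream-function characterization: any divergence-free $\bdd{v} \in \bdd{V}^N_\Gamma$ has the form $\bdd{v} = \vcurl \phi$ for a $C^0$ piecewise polynomial $\phi$ of degree $N+1$ satisfying boundary conditions matching $\bdd{v}|_{\Gamma_0} = \bdd{0}$. One then applies Euler's formula and the standard simplicial-complex counts (vertices, edges, elements of $\mathcal{T}$) to reduce both dimensions to an explicit combinatorial expression.

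The main obstacle will be the bookkeeping in this dimension count, in particular correctly accounting for local degrees of freedom near the boundary and for the cases $\vertex{a} \in \mathcal{V}_{B,0}$ with $|\mathcal{T}_{\vertex{a}}| \in \{1,2,3\}$ depicted in the singular-vertex classification. Equivalently, a local macroelement approach on each patch $\omega_{\vertex{a}}$ may be used: on every patch around a singular vertex one verifies by direct computation that \cref{eq:alternating-sum-condition} is the only obstruction to the local divergence being surjective onto $DG^{N-1}(\mathcal{T}_{\vertex{a}})$. The threshold $N \geq 4$ is precisely where the local patch dimension count balances; for $N \leq 3$, extra compatibility conditions arise at near-singular configurations and the characterization fails. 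Since \cite[Theorem 4.1]{AinCP21LE} executes exactly this program, the proof reduces to invoking that result once the forward direction has been verified.
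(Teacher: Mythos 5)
Your proposal is correct and ends up in the same place as the paper, which offers no proof of its own and simply cites \cite[Theorem 4.1]{AinCP21LE} for the fact that \cref{eq:alternating-sum-condition} is the only compatibility condition when $N \geq 4$. Your sketch of the forward inclusion (telescoping of the alternating sum via single-valued directional derivatives along the two lines through a singular vertex) and your identification that the reverse inclusion is exactly the content of the cited theorem are both accurate, so the proposal is sound.
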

\begin{remark}
	Note that for $N \geq 4$, we have the identity $Q^{N-1}_{\Gamma} = Q^{N-1} 
	\cap L^1_{\Gamma}(\Omega)$ if and only if there are no singular vertices 
	on $\Gamma_{0}$ (i.e. $\mathcal{V}_S \cap \mathcal{V}_{B, 0} = \emptyset$).
\end{remark}

\begin{figure}[htb]
	\centering
	\begin{subfigure}{0.48\linewidth}
		\centering
		\begin{tikzpicture}[scale=0.525]
			
			\coordinate (a) at (0, 0);
			\coordinate (a0) at (4, 0);
			\coordinate (a1) at (1, 2);
			\coordinate (a2) at (-3, 0);
			\coordinate (a3) at (-1.5, -3);
			
			\filldraw (a) circle (3pt) node[align=center,below]{}
			-- (a0) circle (3pt) 	
			-- (a1) circle (3pt) 
			-- (a);
			\filldraw (a) circle (3pt) node[align=center,below]{}	
			-- (a1) circle (3pt) 
			-- (a2) circle (3pt) 
			-- (a);
			\filldraw (a) circle (3pt) node[align=center,below]{}	
			-- (a2) circle (3pt) 
			-- (a3) circle (3pt) 
			-- (a);
			\filldraw (a) circle (3pt) node[align=center,below]{}	
			-- (a3) circle (3pt) 
			-- (a0) circle (3pt) 
			-- (a);
			
			\draw ($(a)+(0.3, -0.3) $) node{$\vertex{a}$};
		\end{tikzpicture}
		\caption{}
		\label{fig:singular-vertex-interior}
	\end{subfigure}
	\hfill
	\begin{subfigure}{0.48\linewidth}
		\centering
		\begin{tikzpicture}[scale=0.525]
			
			\coordinate (a) at (3, 0);
			\coordinate (a0) at (0, 0);
			\coordinate (a1) at (0, 3);
			
			\coordinate (b0) at (-1.5, 0);
			\coordinate (b1) at (-1, 4);
			
			\filldraw (a) circle (3pt) node[align=center,below]{}
			-- (a0) circle (3pt) 	
			-- (a1) circle (3pt) 
			-- (a);
			
			\filldraw[thick] (a) -- (b0);
			\filldraw[thick] (a) -- (b1);
			
			\draw ($($(a0)!0.5!(b0)$) + (0, -0.4)$) node{$\Gamma_0$};
			\draw ($($(a1)!0.5!(b1)$) + (0.5, 0.2)$) node{$\Gamma_0$};
			
			\draw ($(a)+(0.3, -0.3) $) node{$\vertex{a}$};
			
		\end{tikzpicture}
		\caption{}
		\label{fig:singular-vertex-boundary-1}
	\end{subfigure} \\
	\begin{subfigure}{0.48\linewidth}
		\centering
		
		\begin{tikzpicture}[scale=0.525]
			
			\coordinate (a) at (0, 0);
			\coordinate (a0) at (4, 0);
			\coordinate (a1) at (1, 2);
			\coordinate (a2) at (-3, 0);
			
			\coordinate (b0) at (5, 0);
			\coordinate (b2) at (-4, 0);
			
			\filldraw (a) circle (3pt) node[align=center,below]{}
			-- (a0) circle (3pt) 	
			-- (a1) circle (3pt) 
			-- (a);
			\filldraw (a) circle (3pt) node[align=center,below]{}	
			-- (a1) circle (3pt) 
			-- (a2) circle (3pt) 
			-- (a);
			
			\filldraw[thick] (a) -- (b0);
			\filldraw[thick] (a) -- (b2);
			
			\draw ($(a)+(0, -0.4) $) node{$\vertex{a}$};
			
			\draw ($($(a0)!0.5!(b0)$) + (0, -0.4)$) node{$\Gamma_0$};
			\draw ($($(a2)!0.5!(b2)$) + (0, -0.4)$) node{$\Gamma_0$};
		\end{tikzpicture}
		
		\caption{}
		\label{fig:singular-vertex-boundary-2}
	\end{subfigure}
	\hfill
	\begin{subfigure}{0.48\linewidth}
		\centering
		\begin{tikzpicture}[scale=0.525]
			
			\coordinate (a) at (0, 0);
			\coordinate (a0) at (4, 0);
			\coordinate (a1) at (1, 2);
			\coordinate (a2) at (-3, 0);
			\coordinate (a3) at (-1, -2);
			
			\coordinate (b0) at (5, 0);
			\coordinate (b3) at (-1.5, -3);
			
			\filldraw (a) circle (3pt) node[align=center,below]{}
			-- (a0) circle (3pt) 	
			-- (a1) circle (3pt) 
			-- (a);
			\filldraw (a) circle (3pt) node[align=center,below]{}	
			-- (a1) circle (3pt) 
			-- (a2) circle (3pt) 
			-- (a);
			\filldraw (a) circle (3pt) node[align=center,below]{}	
			-- (a2) circle (3pt) 
			-- (a3) circle (3pt) 
			-- (a);
			
			\filldraw[thick] (a) -- (b0);
			\filldraw[thick] (a) -- (b3);

			\draw ($(a)+(0.3, -0.3) $) node{$\vertex{a}$};
			
			\draw ($($(a0)!0.5!(b0)$) + (0, -0.4)$) node{$\Gamma_0$};
			\draw ($($(a3)!0.5!(b3)$) + (0.5, -0.3)$) node{$\Gamma_0$};
		\end{tikzpicture}
		\caption{}
		\label{fig:singular-vertex-boundary-3}
	\end{subfigure}
	\caption{Possible configurations for a singular vertex $\vertex{a} \in 
		\mathcal{V}_S$}
	\label{fig:singular-vertices}
\end{figure}
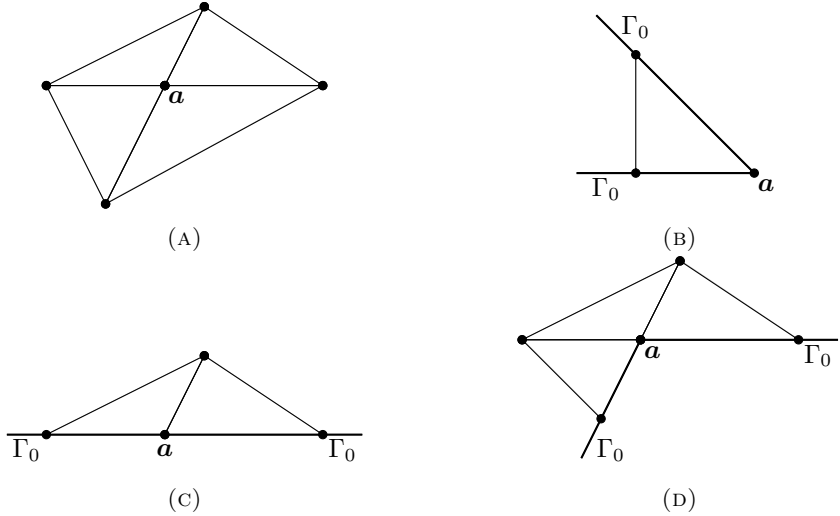

\subsection{Existing stability results}

The inf-sup constant with boundary conditions is the following analogue of 
\cref{eq:inf-sup-sv-nobc-def}:
\begin{align}
	\label{eq:inf-sup-sv-def}
	\beta_{N, p, \Gamma} := \newinf_{ q \in Q^{N-1}_{\Gamma} } \sup_{ \bdd{v} 
		\in 
		\bdd{V}^N_{\Gamma} } \frac{ (\dive \bdd{v}, q) }{ 
		\|\bdd{v}\|_{1,p,\Omega} 
		\|q\|_{p',\Omega}  }, \qquad p \in (1, \infty),
\end{align}
where we implicitly assume that $0$ is always excluded from the infimums and 
supremums. Since $\dive : \bdd{V}^N_{\Gamma} \to Q^{N-1}_{\Gamma}$ is 
surjective by construction, $\beta_{N, p, \Gamma} > 0$ for any $N \geq 1$ and 
$p \in (1, \infty)$ but may decay to 0 as the mesh is refined or as the 
polynomial degree is raised.

In the Newtonian case ($p=2$), sharp lower and upper bounds for the inf-sup 
constant $\beta_{N, 2}$ in \cref{eq:inf-sup-sv-def} explicit in the polynomial 
degree 
are known for $N \geq 4$. Related to the algebraic constraint in 
$Q^{N-1}_{\Gamma}$ 
\cref{eq:q-characterization}, we define the following geometric quantity at a 
vertex $\vertex{a}$:
\begin{align*}
	\xi_{\vertex{a}} := \sum_{i=1}^{|\mathcal{T}_{\vertex{a}}|-1} 
	|\sin(\theta_i + \theta_{i+1})| + \begin{cases}
		|\sin(\theta_{|\mathcal{T}_{\vertex{a}}|} + \theta_1)| & \text{if } 
		\vertex{a} \in \mathcal{V}_I, \\
		0 & \text{otherwise}.
	\end{cases} 
\end{align*}
Note that for $\vertex{a} \in \mathcal{V}_I \cup \mathcal{V}_{B, 0}$, 
$\xi_{\vertex{a}} = 0$ if and only if $\vertex{a} \in \mathcal{V}_S$; i.e.  
$\mathcal{V}_S = \{ \vertex{a} \in \mathcal{V}_I \cup \mathcal{V}_{B, 0} : 
\xi_{\vertex{a}} = 0 \}$. For a mesh $\mathcal{T}$, we define
\begin{align}
	\label{eq:xi-t-def}
	\xi_{\mathcal{T}} := \min_{ \vertex{a} \in (\mathcal{V}_I \cup 
		\mathcal{V}_{B, 0}) \setminus \mathcal{V}_{S} } \xi_{\vertex{a}}.
\end{align}
Then, \cite[Theorem 5.1]{AinCP21LE} gives the following lower bound on 
$\beta_{N, 2}$, while the upper bound follows from \cite[Theorem 6]{Grable24}.
\begin{lemma}
	\label{lem:inf-sup-p2}
	Let $N \geq 4$. Then, there exist constants $\beta_0, \beta_1, \tilde{\xi} 
	> 0$, depending only on $\Omega$, $\Gamma_0$, and shape regularity, such 
	that
	\begin{subequations}
		\begin{alignat}{2}
			\label{eq:inf-sup-p2-lower}
			\beta_{N, 2} &\geq \xi_{\mathcal{T}} \beta_0, \qquad & & \\
			\label{eq:inf-sup-p2-upper}
			\beta_{N, 2} &\leq \xi_{\mathcal{T}} \beta_1 \qquad & &\text{if } 
			\xi_{\mathcal{T}} < \tilde{\xi} \text{ and } |\Gamma_1| = 0.
		\end{alignat}
	\end{subequations}
\end{lemma}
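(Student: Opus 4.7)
My plan for the lower bound \eqref{eq:inf-sup-p2-lower} is the classical one: by Babu\v{s}ka-Brezzi duality, it suffices to produce, for each $q\in Q^{N-1}_{\Gamma}$, a velocity $\bdd{v}\in\bdd{V}^N_{\Gamma}$ with $\dive\bdd{v}=q$ and $\|\bdd{v}\|_{1,2,\Omega}\le (\xi_{\mathcal{T}}\beta_0)^{-1}\|q\|_{2,\Omega}$. I would split $q=q_c+q_j$, where $q_c$ lies in the divergence image of a Stokes pair known to be inf-sup stable uniformly in $N$ (for instance a Taylor-Hood-type pair sitting inside $\bdd{V}^N_{\Gamma}$), and $q_j$ captures the discontinuous remainder, which can be written as an edge-jump function compatible with the constraints \cref{eq:q-characterization}. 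The inversion for $q_c$ is $\xi_{\mathcal{T}}$-independent and furnishes the first half of $\bdd{v}$ with $\|\bdd{v}_c\|_{1,2,\Omega}\lesssim \|q_c\|_{2,\Omega}$.

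The jump part $q_j$ is then handled patchwise. At each non-singular vertex $\vertex{a}\in(\mathcal{V}_I\cup\mathcal{V}_{B,0})\setminus\mathcal{V}_S$, the alternating-sum functional $\mathcal{A}_{\vertex{a}}$ evaluated on the element-wise traces of $q_j$ across the edges emanating from $\vertex{a}$ gives rise to a small local linear system whose conditioning is precisely $\xi_{\vertex{a}}^{-1}$ (by the elementary trigonometric identities that motivate the definition of $\xi_{\vertex{a}}$). Solving this system prescribes the tangential traces of a correction velocity supported in $\omega_{\vertex{a}}$ built from edge- and vertex-bubble functions of degree $N$; polynomial stability estimates on the reference triangle then give $\|\bdd{v}_j\|_{1,2,\omega_{\vertex{a}}}\lesssim \xi_{\vertex{a}}^{-1}\|q_j\|_{2,\omega_{\vertex{a}}}$ with $N$-independent constants. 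Bounded overlap of the patches upgrades this to the global estimate with the factor $\xi_{\mathcal{T}}^{-1}$.

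The upper bound \eqref{eq:inf-sup-p2-upper} I would prove by exhibiting a witness. Pick a vertex $\vertex{a}_0$ realizing $\xi_{\vertex{a}_0}=\xi_{\mathcal{T}}$ and define $q^\star$ to alternate in sign on the elements of $\mathcal{T}_{\vertex{a}_0}$ and vanish elsewhere, corrected by a constant to be mean-zero (using $|\Gamma_1|=0$). For any $\bdd{v}\in\bdd{V}^N_{\Gamma}$, integration by parts on each $K_i\subset\omega_{\vertex{a}_0}$ reduces $(\dive\bdd{v},q^\star)$ to boundary integrals along the spokes emanating from $\vertex{a}_0$, whose leading trigonometric coefficients are precisely the $\sin(\theta_i+\theta_{i+1})$ factors that define $\xi_{\vertex{a}_0}$. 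Cauchy-Schwarz then gives $(\dive\bdd{v},q^\star)\lesssim \xi_{\mathcal{T}}\|\bdd{v}\|_{1,2,\Omega}\|q^\star\|_{2,\Omega}$, and the condition $\xi_{\mathcal{T}}<\tilde\xi$ ensures that the mean-zero correction does not swamp the leading term.

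\textbf{Main obstacle.} The delicate step is the splitting $q=q_c+q_j$ together with the claim that the resulting dual basis for the vertex-local systems has $N$-uniform stability: the matrix one inverts at each vertex has size growing with $N$, and although its Euclidean conditioning is $O(\xi_{\vertex{a}}^{-1})$, translating this into the desired $H^1$-$L^2$ inf-sup requires an explicit dual polynomial basis on the reference triangle whose norms are controlled independently of $N$. I expect this construction, essentially the content of \cite[Theorem 5.1]{AinCP21LE}, to be the main technical hurdle; once it is in hand, the upper bound is a short computation driven by the choice of witness.
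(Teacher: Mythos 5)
First, be aware that the paper does not prove this lemma: the lower bound \cref{eq:inf-sup-p2-lower} is quoted from \cite[Theorem 5.1]{AinCP21LE} and the upper bound \cref{eq:inf-sup-p2-upper} from \cite[Theorem 6]{Grable24}, so your sketch has to be measured against those proofs (and against the paper's own $L^p$ generalization in \cref{sec:invert-div,sec:lp-stability-l2-projection}). Your lower-bound outline is broadly aligned with that machinery, but the decomposition is not ``continuous plus jump'': the actual construction splits $q$ into element averages (matched by a fixed low-order stable pair, \cref{lem:interp-avg-div}), vertex values (matched patchwise via the local algebraic system of \cite[Lemma 4.2]{AinCP21LE}, which is where $\xi_{\vertex{a}}^{-1}$ enters, cf.\ \cref{lem:interp-div-vertices}), and an interior remainder in $Q_0^{N-1}(K)$ inverted element-by-element by a Bogovskii-type operator (\cref{cor:invert-div-element-phys}). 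One correction to your ``main obstacle'': the vertex-local system has fixed size, of order $|\mathcal{T}_{\vertex{a}}|$ and independent of $N$ --- it only prescribes the values $q|_{K_i}(\vertex{a})$ --- and the $N$-uniformity comes instead from rapidly decaying polynomial edge traces and stable polynomial liftings, so the difficulty is not where you place it.

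Your upper-bound argument has a genuine gap. The mean-corrected sign-alternating indicator $q^\star=\sum_i(-1)^i\mathds{1}_{K_i}$ cannot serve as a witness: its element averages are of size $|K_i|$, comparable (after scaling) to $\|q^\star\|_{2,\omega_{\vertex{a}_0}}$, and element averages can always be matched by a degree-$2$ velocity whose $H^1$ norm is bounded independently of $\xi_{\mathcal{T}}$ --- this is precisely \cref{lem:interp-avg-div}. Consequently $\sup_{\bdd{v}}(\dive\bdd{v},q^\star)/\|\bdd{v}\|_{1,2,\Omega}\gtrsim\|q^\star\|_{2,\Omega}$ uniformly in $\xi_{\mathcal{T}}$, and no bound proportional to $\xi_{\mathcal{T}}$ can emerge from this choice. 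The witness actually used in \cite{Grable24} is the Jacobi-weighted alternating function $b_{\vertex{a}_0}$ appearing in Step 2 of the proof of \cref{lem:l2-proj-lp-stability-local}: it spans the $L^2$-orthogonal complement of $\widetilde{DG}^{N-1}_{\Gamma}(\mathcal{T}_{\vertex{a}_0})$ in $DG^{N-1}(\mathcal{T}_{\vertex{a}_0})$ when $\vertex{a}_0$ is singular, so that for a nearly singular vertex the pairing $(\dive\bdd{v},b_{\vertex{a}_0})$ degenerates at rate $\xi_{\vertex{a}_0}$ against \emph{every} admissible $\bdd{v}$. Near-orthogonality to all of $\dive\bdd{V}^N_{\Gamma}$ is a much stronger property than the smallness of a few trigonometric boundary coefficients, and the indicator function does not have it.
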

In particular, the factor of $\xi_{\mathcal{T}}$ in \cref{eq:inf-sup-p2-lower} 
is sharp. For $p \in (1, \infty)$, one also has $\beta_{N, p} \geq \beta_0(N, 
p) \xi_{\mathcal{T}}$ \cite[Lemma A.4 \& Lemma 
A.11]{Tscherpel18}, where $\beta_0$ depends on $N$, $p$, shape regularity, 
$\Omega$, and $\Gamma_0$. One of the goals of this work is to quantify the 
dependence of this constant on the polynomial degree $N$.

\section{Main results}
\label{sec:main-results}

We now turn to the three main results: a discrete right-inverse of the 
divergence operator bounded uniformly in polynomial degree, a lower bound for 
the inf-sup constant $\beta_{N, p, \Gamma}$ \cref{eq:inf-sup-sv-def} that 
decays at 
worst like $N^{-3/2}$, and a local Fortin operator with continuity explicit in 
polynomial degree. For constants appearing in estimates, we use the notation 
$C(r, s, \ldots)$ to denote that, unless specified otherwise, the constant 
depends only on $r, s, \ldots$.

\subsection{Right-inverse of the divergence operator}

Our first result is the existence of a right-inverse of the divergence 
operator for each $N \geq 4$ mapping from $Q^{N-1}_{\Gamma}$ to 
$\bdd{V}^N_{\Gamma}$ that is $W^{1,p}(\Omega)$-stable uniformly in polynomial 
degree.	
\begin{theorem}
	\label{thm:invert-div}
	For $N \geq 4$, there exists a linear operator $\mathcal{L}_{\dive, N} : 
	Q_{\Gamma}^{N-1} \to \bdd{V}_{\Gamma}^{N}$ satisfying the following for 
	all $q \in Q_{\Gamma}^{N-1}$:
	\begin{align}
		\label{eq:invert-div-fem}
		\dive \mathcal{L}_{\dive, N} q = q \quad \text{and} \quad \|  
		\mathcal{L}_{\dive, N} q \|_{1, p, \Omega} \leq C(p) 
		\xi_{\mathcal{T}}^{-1} \|q\|_{p, \Omega} \qquad \forall p \in 
		(1,\infty),
	\end{align}
	where $C(p)$ also depends on $\Omega$, $\Gamma_0$, and shape regularity.
\end{theorem}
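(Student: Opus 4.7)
The plan is to construct $\mathcal{L}_{\dive,N}$ by decomposing $q\in Q^{N-1}_{\Gamma}$ as $q = q_c + q_f$, where $q_c \in Q^{3}_{\Gamma} \subseteq Q^{N-1}_{\Gamma}$ is a low-order projection of $q$ matching the elementwise means of $q$ (and respecting \cref{eq:alternating-sum-condition}), and $q_f := q - q_c \in Q^{N-1}_{\Gamma}$ has mean zero on every element. The right-inverse will then be assembled as $\mathcal{L}_{\dive,N} q := \bdd{v}_c + \bdd{v}_f$, with $\bdd{v}_c$ and $\bdd{v}_f$ constructed from $q_c$ and $q_f$ by quite different mechanisms: $\bdd{v}_c$ through the existing $L^2$-stable construction, and $\bdd{v}_f$ through a local $p$-version lifting on each element.

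For $\bdd{v}_c$ I would apply the Scott--Vogelius right-inverse from \cite[Theorem~5.1]{AinCP21LE} at the minimal admissible degree $N=4$, obtaining $\bdd{v}_c \in \bdd{V}^{4}_{\Gamma} \subseteq \bdd{V}^{N}_{\Gamma}$ with $\dive \bdd{v}_c = q_c$ and $\|\bdd{v}_c\|_{1,2,\Omega} \le C\,\xi_{\mathcal{T}}^{-1}\|q_c\|_{2,\Omega}$. Since $q_c$ and $\bdd{v}_c$ both live in spaces of fixed polynomial degree, elementwise scaling together with norm equivalences on finite-dimensional spaces (with constants depending only on shape regularity and $p$) upgrades this to $\|\bdd{v}_c\|_{1,p,\Omega} \le C(p)\,\xi_{\mathcal{T}}^{-1}\|q\|_{p,\Omega}$; the factor $\xi_{\mathcal{T}}^{-1}$ enters the final estimate only through this coarse lift. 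For $\bdd{v}_f$ I would exploit the mean-zero property of $q_f$ on each element $K$ to apply a $p$-version Bogovski\u{\i}-type right-inverse of $\dive$ on $K$, producing $\bdd{w}_K \in \mathcal{P}_N(K)^2$ that vanishes on $\partial K$, satisfies $\dive \bdd{w}_K = q_f|_K$, and obeys $\|\bdd{w}_K\|_{1,p,K} \le C(p)\,\|q_f\|_{p,K}$ uniformly in $N$. Pasting the $\bdd{w}_K$ together by zero extension across edges then yields $\bdd{v}_f \in \bdd{V}^{N}_{\Gamma}$ with $\|\bdd{v}_f\|_{1,p,\Omega} \le C(p)\|q\|_{p,\Omega}$.

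The main obstacle is the construction of the $p$-version right-inverse of $\dive$ on a single triangle that is continuous $L^p \to \bdd{W}^{1,p}_0$ with constants independent of the polynomial degree $N$. For $p=2$ this is classical (Costabel--Dauge, Demkowicz--Gopalakrishnan), but the general-$p$ case requires an explicit polynomial construction and careful tracking of $L^p$-continuity constants as $N$ grows; I expect to carry this out on the reference triangle via a stable polynomial extension of suitable boundary traces together with a direct $L^p$-estimate on the resulting polynomial field. A secondary technical point is arranging the coarse projection $q_c$ so that simultaneously $q_c \in Q^{3}_{\Gamma}$ and $q_f$ has elementwise mean zero; this reduces to a finite-dimensional linear-algebra question whose solvability follows from the mutual compatibility of elementwise-mean matching and singular-vertex conditions on $Q^{3}_{\Gamma}$, independent of $N$.
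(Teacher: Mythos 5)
There is a genuine gap in the local correction step, and it is exactly the point where the Scott--Vogelius structure bites. Your residual $q_f = q - q_c$ has zero mean on each element but does \emph{not} vanish at the vertices of each element, and the local problem you pose for $\bdd{w}_K$ is then unsolvable: if $\bdd{w}_K \in \mathcal{P}_N(K)^2$ vanishes on all of $\partial K$, then at each vertex $\vertex{a} \in \mathcal{V}_K$ the directional derivatives of $\bdd{w}_K$ along both edge tangents meeting at $\vertex{a}$ vanish, hence $\nabla \bdd{w}_K(\vertex{a}) = 0$ and in particular $\dive \bdd{w}_K(\vertex{a}) = 0$. So a Bogovski\u{\i}-type lift with homogeneous trace on a single triangle can only produce divergences in $Q_0^{N-1}(K)$, i.e.\ right-hand sides vanishing at the three vertices as well as having zero mean. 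This is not a technicality one can absorb into the "finite-dimensional linear algebra" you defer at the end: the vertex values of $q$ are coupled across the elements of each vertex patch, and the solvability and conditioning of the resulting algebraic system is precisely where the alternating-sum condition \cref{eq:alternating-sum-condition} and the factor $\xi_{\mathcal{T}}^{-1}$ originate. The paper's proof inserts a dedicated intermediate step (\cref{lem:interp-div-vertices}) that builds, for each vertex, a patch-supported field whose divergence matches the prescribed vertex values on every abutting element while keeping zero element means; only after both the means and the vertex values have been interpolated does the elementwise correction via \cref{cor:invert-div-element-phys} apply. Your proposal's claim that $\xi_{\mathcal{T}}^{-1}$ "enters only through the coarse lift" of the element means is therefore structurally wrong: the near-singular-vertex degeneracy concerns vertex values of the pressure, not element averages. (By contrast, the part you flag as the "main obstacle" --- an $N$-uniform $L^p \to \bdd{W}^{1,p}$ right inverse on the reference triangle --- is available from the Bogovski\u{\i} operator of Costabel--McIntosh combined with a divergence-free polynomial trace lifting, but again only for data in $Q_0^{N-1}(\reftri)$.)

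A secondary concern: for the coarse part you propose to take the $N=4$ right inverse with its global $L^2$ bound and "upgrade" it to $L^p$ by elementwise norm equivalence. Elementwise equivalence of $L^2$ and $L^p$ norms on fixed-degree spaces does not convert a \emph{global} $\ell^2$-type bound into a global $\ell^p$-type bound without incurring a factor depending on the number of elements (for $p<2$ this is just H\"older applied to the sum over elements). You would need the coarse right inverse to be local, or to argue as the paper does via the genuinely $L^p$-bounded continuous right inverse of Arnold--Scott--Vogelius (\cref{lem:invert-div-cont}) followed by Scott--Zhang interpolation and an edge-moment correction (\cref{lem:interp-avg-div}).
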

The proof of \cref{thm:invert-div} appears in \cref{sec:proof-invert-div}. In 
the case $p=2$, \cref{thm:invert-div} is equivalent to the inf-sup stability 
\cref{eq:inf-sup-p2-lower}. As we shall see shortly, the case $p\neq 2$ 
requires additional results.

\subsection{Inf-sup stability}

An immediate consequence of \cref{thm:invert-div} is the following lower 
bound on the inf-sup constant $\beta_{N, p, \Gamma}$ \cref{eq:inf-sup-sv-def}.
\begin{align}
	\label{eq:inf-sup-by-discrete-norm}
	\beta_{N, p, \Gamma} = \newinf_{ q \in Q^{N-1}_{\Gamma} } \sup_{ \bdd{v} 
		\in 
		\bdd{V}^N_{\Gamma} } \frac{ (\dive \bdd{v}, q) }{ 
		\|\bdd{v}\|_{1,p,\Omega} \|q\|_{p',\Omega}  } \geq 
	\frac{\xi_{\mathcal{T}}}{C(p)}  
	\newinf_{ q \in Q^{N-1}_{\Gamma} } \sup_{r \in 
		Q_{\Gamma}^{N-1}} \frac{ (q, r) }{ \|r\|_{p, \Omega} \|q\|_{p', 
			\Omega} },
\end{align}
where $C(p)$ is the same constant as in \cref{eq:invert-div-fem}. The final 
term of the RHS involves a discrete analogue of the dual $L^p$ norm, which we 
denote by
\begin{align}
	\label{eq:discrete-lpprime-norm}
	\vertiii{q}_{p', Q_{\Gamma}^{N-1}} := \sup_{r \in 
		Q_{\Gamma}^{N-1}} \frac{ (q, r) }{ \|r\|_{p, \Omega} } \qquad \forall 
	q \in Q_{\Gamma}^{N-1}.
\end{align}
Of course, if the supremum above is taken over all of $L^{p}(\Omega)$, then 
the above quantity is the $L^{p'}$-norm of $q$. Since $\dive 
\bdd{V}^N_{\Gamma} = Q^{N-1}_{\Gamma}$ and $\|\dive \bdd{v}\|_{1, p} \leq C 
\|\bdd{v}\|_{1, p}$, this discrete dual norm also appears in an upper-bound 
for $\beta_{N, p, \Gamma}$, as
\begin{align*}
	\beta_{N, p, \Gamma} \leq \frac{1}{C} \newinf_{ q \in Q^{N-1}_{\Gamma} } 
	\sup_{ 
		\bdd{v} \in \bdd{V}^N_{\Gamma} } \frac{ (\dive \bdd{v}, q) }{ 
		\|\dive \bdd{v}\|_{1,p,\Omega} \|q\|_{p',\Omega}  } 
	= \frac{1}{C} \newinf_{ q \in Q^{N-1}_{\Gamma} } 
	\frac{\vertiii{q}_{p', Q_{\Gamma}^{N-1}}}{\|q\|_{p', \Omega}}.
\end{align*}

In the case $p = 2$, one has $\vertiii{\cdot}_{2, Q^{N-1}_{\Gamma}} = 
\|\cdot\|_{2, \Omega}$, and so $\beta_{N, 2} \geq C \xi_{\mathcal{T}}$, as 
previously stated in \cref{eq:inf-sup-p2-lower}. For $p \neq 2$, one 
additionally needs to find the equivalence constants between $\|\cdot\|_{p', 
	\Omega}$ and  $\vertiii{\cdot}_{p', Q^{N-1}_{\Gamma}}$ on $Q^{N-1}_{\Gamma}$. 
Suppose that the $L^2$-projection operator $\mathbb{P}_{N-1}^{Q} : 
L^1(\Omega) \to Q_{\Gamma}^{N-1}$ is bounded in $L^p(\Omega)$; that is 
\begin{alignat*}{2}
	(\mathbb{P}_{N-1}^{Q} q, r) &= (q, r) \qquad & &\forall r \in 
	Q_{\Gamma}^{N-1}, \ \forall q \in L^1(\Omega), \\
	\|\mathbb{P}_{N-1}^{Q} r\|_{p, \Omega} &\leq \Delta(N, p) \|r\|_{p, 
		\Omega} \qquad & &\forall r \in L^{p}(\Omega),
\end{alignat*}
for some constant $\Delta(N, p)$. Then, we have
\begin{align}
	\label{eq:discrete-norm-by-projection}
	\vertiii{q}_{p', Q_{\Gamma}^{N-1}} \geq \frac{ (\mathbb{P}_{N-1}^{Q}(q 
		|q|^{p'-2}), q ) }{ \| \mathbb{P}_{N-1}^{Q}(q |q|^{p'-2}) \|_{p, \Omega} }
	\geq \frac{1}{\Delta(N, p)} \frac{ \|q\|_{p', \Omega}^{p'} }{ 
		\|q|q|^{p'-2}\|_{p, \Omega} } = \frac{1}{\Delta(N, p)} \|q\|_{p', 
		\Omega}.
\end{align}
Consequently, one route to proving inf-sup stability is to 
establish the $L^p$-stability of the $L^2$-projection onto $Q_{\Gamma}^{N-1}$, 
which is the route we pursue.

To this end, we place the following two mild restrictions on the mesh 
$\mathcal{T}$.		

\vspace{0.5em}

\begin{description}
	\item[(M1)\label{cond:mesh-disjoint-sing}] No two singular vertices share 
	an edge; i.e. if $\vertex{a}, \bdd{b} \in \mathcal{V}_S$ with $\vertex{a} 
	\neq \bdd{b}$, then $\mathcal{T}_{\vertex{a}} \cap \mathcal{T}_{\bdd{b}} = 
	\emptyset$.
	
	\vspace{0.5em}
	
	\item[(M2)\label{cond:mesh-no-odd-sing}] If $|\Gamma| = |\Gamma_0|$, then 
	no singular boundary vertex $\vertex{a} \in \mathcal{V}_{B, 0} \cap 
	\mathcal{V}_S$ coincides with a corner of $\Omega$; i.e. if $\vertex{a} 
	\in \mathcal{V}_{S} \cap \mathcal{V}_{B, 0}$, then 
	$|\mathcal{T}_{\vertex{a}}| = 2$.
\end{description}
\vspace{0.5em}
As we will see in \cref{sec:lp-stability-l2-projection} below, the first 
condition \ref{cond:mesh-disjoint-sing} ensures that 
$\mathbb{P}_{N-1}^{Q}$ can be decomposed into local projections that are 
defined on an individual element or a single vertex patch, while 
\ref{cond:mesh-no-odd-sing} ensures that $\mathcal{A}_{\vertex{a}}(1) = 0$ 
for all $\vertex{a} \in \mathcal{V}_{S}$, which is helpful in dealing with the 
zero-mean constraint in $Q_{\Gamma}^{N-1}$ when $|\partial \Omega| = 
|\Gamma_0|$. Under 
these conditions, the operator norm of $\mathbb{P}_{N-1}^{Q}$ grows at most 
like $N^{3/2}$:	
\begin{theorem}
	\label{thm:inf-sup-stability}
	If the mesh satisfies 
	\ref{cond:mesh-disjoint-sing}--\ref{cond:mesh-no-odd-sing} and $N \geq 4$, 
	then
	\begin{align}
		\label{eq:lp-norm-l2-projection}
		\|\mathbb{P}_{N-1}^{Q} r\|_{p, \Omega} &\leq C N^{3\left| \frac{1}{2} 
			- \frac{1}{p} \right|}  \|r\|_{p, 
			\Omega} \qquad \forall r \in L^{p}(\Omega), \ \forall p \in 
		[1,\infty],
	\end{align}
	where $C$ depends on shape regularity and if $|\Gamma| = |\Gamma_0|$, also 
	on $|\Omega|$. Consequently, 
	\begin{align}
		\label{eq:inf-sup-stability}
		\beta_{N, p, \Gamma} \geq C(p) N^{-3\left| \frac{1}{2} - \frac{1}{p} 
			\right|} 
		\xi_{\mathcal{T}} > 0 \qquad \forall p \in (1,\infty), 
	\end{align}
	where $C(p)$ also depends on  $\Omega$, $\Gamma_0$, and shape regularity 
	and $\beta_{N, p, \Gamma}$ is defined in \cref{eq:inf-sup-sv-def}.
\end{theorem}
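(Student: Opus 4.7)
The strategy is to first establish the projection bound \cref{eq:lp-norm-l2-projection} and then derive \cref{eq:inf-sup-stability} from it via \cref{thm:invert-div}. Indeed, given \cref{eq:lp-norm-l2-projection}, \cref{thm:invert-div} combined with \cref{eq:inf-sup-by-discrete-norm} yields $\beta_{N,p,\Gamma}\geq (\xi_\mathcal{T}/C(p))\newinf_{q} \vertiii{q}_{p',Q^{N-1}_\Gamma}/\|q\|_{p',\Omega}$, and \cref{eq:discrete-norm-by-projection} converts this infimum into $1/\Delta(N,p)$, where $\Delta(N,p)$ is the $L^p$ operator norm of $\mathbb{P}_{N-1}^Q$. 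Since $\mathbb{P}_{N-1}^Q$ is self-adjoint in $L^2$, $\Delta(N,p)=\Delta(N,p')$, and Riesz--Thorin interpolation between $L^2$ (norm $1$) and $L^\infty$ reduces \cref{eq:lp-norm-l2-projection} to the single endpoint estimate $\Delta(N,\infty)\leq CN^{3/2}$.

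\textbf{Decomposition via \ref{cond:mesh-disjoint-sing}--\ref{cond:mesh-no-odd-sing}.} Let $\Pi_{N-1}:L^1(\Omega)\to DG^{N-1}(\mathcal{T})$ denote the piecewise $L^2$-projection onto polynomials of degree $N-1$, and for each $\vertex{a}\in\mathcal{V}_S$ let $\psi_{\vertex{a}}\in DG^{N-1}(\omega_{\vertex{a}})$ (extended by zero) be the Riesz representer in $DG^{N-1}(\omega_{\vertex{a}})$ of the alternating-sum functional $\mathcal{A}_{\vertex{a}}$. Condition \ref{cond:mesh-disjoint-sing} makes the supports of the $\psi_{\vertex{a}}$ pairwise disjoint, so they form an $L^2$-orthogonal family, and \ref{cond:mesh-no-odd-sing} ensures that when $|\Gamma_1|=0$ the constant function lies in $Q_{\Gamma}^{N-1}$, decoupling the zero-mean constraint from the singular-vertex constraints. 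Under these reductions,
$$\mathbb{P}_{N-1}^Q r \;=\; \Pi_{N-1} r \;-\; \sum_{\vertex{a}\in\mathcal{V}_S}\frac{\mathcal{A}_{\vertex{a}}(\Pi_{N-1} r)}{\|\psi_{\vertex{a}}\|^2_{2,\omega_{\vertex{a}}}}\,\psi_{\vertex{a}} \;-\; (\text{mean correction, if needed}),$$
i.e.\ the projection localizes to a piecewise projection plus rank-one corrections supported on disjoint patches.

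\textbf{Endpoint estimate and main obstacle.} It remains to bound each piece above in $L^\infty\to L^\infty$. On the reference triangle $\reftri$, the operator norm of the $L^2$-projection onto $\mathcal{P}_{N-1}(\reftri)$ equals $\sup_{\hat{x}\in\reftri}\|K_N(\hat{x},\cdot)\|_{1,\reftri}$, where $K_N$ is the Koornwinder reproducing kernel. The pointwise bound $K_N(\hat{x},\hat{x})\leq CN^3$, saturated at a vertex, together with Cauchy--Schwarz gives $\|K_N(\hat{x},\cdot)\|_{1,\reftri}\leq CN^{3/2}$---the 2D analogue of the $\sqrt{N}$ Lebesgue bound for partial Legendre sums---and scaling transfers this estimate to $\Pi_{N-1}$ on any shape-regular element. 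The same kernel bound controls each point evaluation $(\Pi_{N-1} r)(\vertex{a})$ in $\mathcal{A}_{\vertex{a}}(\Pi_{N-1} r)$, and combined with the shape-regular scalings $\|\psi_{\vertex{a}}\|_{\infty,\omega_{\vertex{a}}}/\|\psi_{\vertex{a}}\|_{2,\omega_{\vertex{a}}}^{2}=O(h_{\vertex{a}}^{-2})$ and $|\omega_{\vertex{a}}|=O(h_{\vertex{a}}^{2})$, each rank-one correction also contributes $O(N^{3/2})$. Summing over the (M1)-disjoint patches yields $\Delta(N,\infty)\leq CN^{3/2}$, hence \cref{eq:lp-norm-l2-projection} and thereby \cref{eq:inf-sup-stability}. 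The crux of the argument is the sharp exponent $3/2$ (rather than the loose $2$ produced by a naive inverse inequality applied to the $L^2$-isometry of $\mathbb{P}_{N-1}^Q$), which demands a careful analysis of the reproducing kernel of $\mathcal{P}_{N-1}(\reftri)$ at a vertex of the reference triangle, where it attains its maximum; once this endpoint bound is secured, the remaining ingredients (orthogonal localization, duality, and interpolation) are routine.
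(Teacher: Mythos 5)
Your overall architecture matches the paper's: reduce the inf-sup bound to the $L^p$ operator norm of $\mathbb{P}_{N-1}^{Q}$ via \cref{eq:inf-sup-by-discrete-norm,eq:discrete-norm-by-projection}, localize the projection using \ref{cond:mesh-disjoint-sing} into elementwise projections plus rank-one corrections at singular vertices (your Riesz representer $\psi_{\vertex{a}}$ is, up to normalization, the paper's $b_{\vertex{a}}$ built from $P_N^{(0,2)}$), handle the mean constraint via \ref{cond:mesh-no-odd-sing}, and interpolate from the endpoint(s) $p\in\{1,\infty\}$. The reduction to the single endpoint $p=\infty$ by self-adjointness and Riesz--Thorin is a legitimate minor streamlining of the paper's argument, which proves $L^1$ and $L^\infty$ bounds separately and interpolates with $p=2$.

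However, there is a genuine gap precisely at the step you yourself identify as the crux. Your claimed diagonal kernel bound $K_N(\hat{x},\hat{x})\leq CN^3$, "saturated at a vertex," is false: for the unit weight on the triangle the reciprocal Christoffel function behaves like $N^2$ in the interior, $N^3$ on the open edges, and $N^4$ at the vertices (the two-dimensional analogue of $K_N(1,1)\sim N^2$ for Legendre sums, consistent with $\int_{\reftri}K_N(\hat x,\hat x)\,d\hat x = \dim\mathcal{P}_{N}(\reftri)\sim N^2/2$). With the correct vertex value $N^4$, Cauchy--Schwarz yields only $\|K_N(\hat x,\cdot)\|_{1,\reftri}\leq CN^2$, i.e.\ exactly the "naive" inverse-inequality bound you set out to beat; and even if the diagonal were $N^3$, Cauchy--Schwarz is generically lossy for Lebesgue constants (for Legendre partial sums it gives $N$ against the true $\sqrt{N}$). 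The sharp $N^{3/2}$ endpoint requires off-diagonal decay of the kernel; the paper obtains it by invoking the Lebesgue-constant estimates for orthogonal expansions on the simplex of Dai and Xu \cite{DaiXu09} (Corollary 2.9 with $d=2$, $\kappa_i=1/2$), and no elementary diagonal argument substitutes for this. A secondary imprecision: in the rank-one correction you record $\|\psi_{\vertex{a}}\|_{\infty}/\|\psi_{\vertex{a}}\|_{2}^2=O(h^{-2})$, dropping the $N^2$ growth of the representer's sup-norm (the paper has $\|b_{\vertex{a}}\|_{\infty}\leq C\,\mathrm{diam}(\omega_{\vertex{a}})^{-2}N^2$); the correct accounting pairs this with $\|b_{\vertex{a}}\|_{1}\leq CN^{-1/2}$ via H\"older to get the factor $\|b_{\vertex{a}}\|_{\infty}\|b_{\vertex{a}}\|_{1}/\|b_{\vertex{a}}\|_{2}^2\leq CN^{3/2}$, so the conclusion survives but not by the route you sketch.
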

The proof of \cref{thm:inf-sup-stability} appears in \cref{sec:proof-inf-sup}.

In comparison with the case $p=2$ \cref{eq:inf-sup-p2-lower}, the lower bound 
in 
\cref{eq:inf-sup-stability} in the case $p \neq 2$ converges to 0 at an 
algebraic rate as the polynomial degree is increased. We will see in 
\cref{sec:p-stokes-error-estimates} below that this potential instability 
leads 
to error estimates that are possibly suboptimal with respect to the polynomial 
degree $N$. However, the numerical experiments in \cref{sec:p-stokes-numerics} 
suggest that our lower bound in \cref{eq:inf-sup-stability} is pessimistic.

\begin{remark}
	The quantity $\xi_{\mathcal{T}}$ appearing in the stability estimate 
	\cref{eq:inf-sup-stability} has little practical importance. 
	In particular, given 
	an initial mesh $\mathcal{T}$ with vertices $\vertex{a} \in \mathcal{V}_I 
	\cup \mathcal{V}_{B, 0}$ such that $\xi_{\vertex{a}} \ll 1$, one may 
	perform a barycentric refinement of one or more elements in 
	$\mathcal{T}_{\vertex{a}}$. Denoting by $\tilde{\mathcal{T}}$ the modified 
	mesh, we have that $\xi_{\tilde{\mathcal{T}}}$ is uniformly bounded below 
	by a constant depending only on the shape regularity of $\mathcal{T}$ and 
	$\Omega$ and \ref{cond:mesh-disjoint-sing}--\ref{cond:mesh-no-odd-sing} 
	are automatically satisfied. We refer to \cite[Remark 2 \& Lemma 
	4.6]{AinCP21LE} for further discussion.
\end{remark}

\subsection{Fortin operators}

Local Fortin operators are one of the key tools in most of the existing 
analysis of finite element discretizations of non-Newtonain flows 
(see e.g. \cite{Belenki12,Diening25,Diening13,SuliTsch20}). As shown in 
\cite[Appendix A.2.2.2]{Tscherpel18}, one can 
modify the construction of a right-inverse of the divergence operator for the 
Scott-Vogelius elements to obtain a local Fortin operator. We proceed 
similarly here by modifying the proofs of 
\cref{thm:invert-div,thm:inf-sup-stability} to construct two local Fortin 
operators: one with continuity properties explicit in the polynomial degree 
that 
is a projection only in the lowest order ($N=4$) case and another that is a 
projection for any degree but with continuity constants implicit in the 
polynomial degree.

For these operators, we will assume that the mesh contains no singular 
vertices.

\vspace{0.5em}

\begin{description}
	\item[(M3)\label{cond:mesh-no-sing}] The mesh contains no singular 
	vertices; i.e. $\mathcal{V}_S = \emptyset$.
\end{description}

\vspace{0.5em}

\noindent Moreover, let $\mathcal{T}_K := \cup_{\vertex{a} \in \mathcal{V}_K} 
\mathcal{T}_{\vertex{a}}$, and $\omega_{K} := 
\mathrm{int}(\cup_{L \in \mathcal{T}_{K}} \bar{L})$. Then, the existence of 
the first Fortin operator is as follows.

\begin{theorem}
	\label{thm:fortin-nonprojection}
	Suppose that the mesh satisfies \ref{cond:mesh-no-sing} and let $N \geq 
	4$. There exists a linear operator $\mathcal{L}_{F, N} : 
	\bdd{W}^{1,1}(\Omega) \to \bdd{V}^N$ satisfying the following for $\bdd{v} 
	\in \bdd{W}^{1,1}(\Omega)$:
	\begin{subequations}
		\begin{alignat}{2}
			\label{eq:fortin-nonprojection-l2-projection}
			\int_{\Omega} (\dive \mathcal{L}_{F, N} \bdd{v}) q \d{\bdd{x}} &= 
			\int_{\Omega} (\dive \bdd{v}) q \d{\bdd{x}} \qquad & &\forall q 
			\in DG^{N-1}(\mathcal{T}), \\
			\mathcal{L}_{F, N} \bdd{v}|_{\Gamma_0} &= \bdd{0} \qquad & & 
			\text{if } \bdd{v} \in \bdd{W}^{1,1}_{\Gamma}(\Omega), \\
			\mathcal{L}_{F, N} \bdd{v} &= \bdd{v} \qquad & &\text{if } \bdd{v} 
			\in \bdd{V}^4.
		\end{alignat}	
	\end{subequations}
	Moreover, for all $p \in [1,\infty]$, $\epsilon \in (0, 1)$, $\bdd{v} \in 
	\bdd{W}^{1,p}(\Omega)$, and $K \in \mathcal{T}$, there holds
	\begin{subequations}
		\begin{align}
			\label{eq:fortin-nonprojection-lp-local}
			\| \mathcal{L}_{F, N} \bdd{v} \|_{p, K} &\leq \begin{cases}
				C(p) \left(   \|  \bdd{v} \|_{p, \omega_K} + h_K 
				\xi_{\mathcal{T}}^{-1} N^{3\left|\frac{1}{2} - 
					\frac{1}{p}\right|} |  \bdd{v} |_{1, p, \omega_K} \right) & 
				\text{if } p \in (1,\infty), \\
				C(\epsilon) \left(   \|  \bdd{v} \|_{p, \omega_K} + h_K 
				\xi_{\mathcal{T}}^{-1} N^{\frac{3}{2}  + \epsilon} |  \bdd{v} 
				|_{1, p, 
					\omega_K} \right) & \text{if } p \in \{1,\infty\},
			\end{cases} \\
			\label{eq:fortin-nonprojection-w1p-local}
			| \mathcal{L}_{F, N} \bdd{v} |_{1, p, K} &\leq \begin{cases}
				C(p)  \xi_{\mathcal{T}}^{-1} N^{3\left|\frac{1}{2} - 
					\frac{1}{p}\right|} | \bdd{v} |_{1, p, \omega_K} & \text{if } 
					p 
				\in (1,\infty), \\
				C(\epsilon)  \xi_{\mathcal{T}}^{-1} N^{\frac{3}{2} + \epsilon} | 
				\bdd{v} 
				|_{1, p, \omega_K} & \text{if } p \in \{1,\infty\},
			\end{cases}
		\end{align}
	\end{subequations}
	where $C(p)$ and $C(\epsilon)$ also depend on $\Omega$, $\Gamma_0$, and 
	shape regularity.
\end{theorem}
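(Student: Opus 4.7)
The plan is to build $\mathcal{L}_{F,N}$ as a Scott-Zhang--type quasi-interpolant corrected by a divergence-inverting term assembled from the operators $\mathcal{L}_{\dive, N}$ and $\mathbb{P}^{Q}_{N-1}$ already provided by \cref{thm:invert-div,thm:inf-sup-stability}. Concretely, fix a Scott-Zhang operator $I_N : \bdd{W}^{1,1}(\Omega) \to \bdd{V}^N$ that preserves the Dirichlet trace on $\Gamma_0$, is the identity on $\bdd{V}^4$, and enjoys the classical patch-local stability
\[
\|I_N \bdd{v}\|_{p, K} \leq C\bigl(\|\bdd{v}\|_{p, \omega_K} + h_K |\bdd{v}|_{1, p, \omega_K}\bigr), \qquad |I_N \bdd{v}|_{1, p, K} \leq C |\bdd{v}|_{1, p, \omega_K},
\]
with constants independent of $N$ and of $p \in [1, \infty]$. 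I would then define
\[
\mathcal{L}_{F, N} \bdd{v} := I_N \bdd{v} + \mathcal{L}_{\dive, N} \mathbb{P}^{Q}_{N-1}\bigl(\dive(\bdd{v} - I_N \bdd{v})\bigr).
\]

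Checking the three properties in \cref{eq:fortin-nonprojection-l2-projection} is essentially bookkeeping. The divergence-matching property follows by testing against $q \in Q^{N-1}_\Gamma$ and unwinding the defining identities $\mathbb{P}^{Q}_{N-1}$ is $L^2$-orthogonal onto $Q^{N-1}_\Gamma$ and $\dive \mathcal{L}_{\dive, N} = \mathrm{Id}$; by \cref{eq:q-characterization} and \ref{cond:mesh-no-sing}, $Q^{N-1}_\Gamma$ exhausts $DG^{N-1}(\mathcal{T})$ up to the global mean, which in the enclosed-flow case is absorbed either through a separate mean-matching adjustment or by choosing $I_N$ to preserve the boundary flux. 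Zero-trace preservation on $\Gamma_0$ is immediate from the corresponding properties of $I_N$ and $\mathcal{L}_{\dive, N}$. For $\bdd{v} \in \bdd{V}^4$, $I_N \bdd{v} = \bdd{v}$ makes the corrector vanish, yielding $\mathcal{L}_{F, N}\bdd{v} = \bdd{v}$.

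The stability estimates then follow by chaining three ingredients: (i) the $N$- and $p$-independent bound $\|\dive(\bdd{v} - I_N \bdd{v})\|_{p, \omega_K} \leq C |\bdd{v}|_{1, p, \omega_K}$ coming from $I_N$; (ii) a patch-local version of the $L^p$-stability of $\mathbb{P}^{Q}_{N-1}$ from \cref{eq:lp-norm-l2-projection}, which contributes the factor $N^{3|1/2 - 1/p|}$; and (iii) a patch-local version of the $W^{1,p}$-stability of $\mathcal{L}_{\dive, N}$ from \cref{eq:invert-div-fem}, which contributes the factor $C(p) \xi_{\mathcal{T}}^{-1}$. Combining via the triangle inequality and a finite-overlap argument over $\omega_K$ yields \cref{eq:fortin-nonprojection-lp-local,eq:fortin-nonprojection-w1p-local} for $p \in (1, \infty)$. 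For the endpoints $p \in \{1, \infty\}$, the constant $C(p)$ in \cref{eq:invert-div-fem} blows up, so I would extract the $N^{3/2+\epsilon}$ factor via Riesz--Thorin interpolation between a bound near $p = 2$ and a coarser endpoint bound, with the $\epsilon$-dependence absorbing the interpolation loss.

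The main obstacle is obtaining the \emph{patch-local} analogues of \cref{thm:invert-div,thm:inf-sup-stability}, since both are stated globally. The assumption \ref{cond:mesh-no-sing} is essential here: without singular vertices, $\mathbb{P}^{Q}_{N-1}$ decouples into independent element- and vertex-patch projections (a feature already needed in \cref{sec:lp-stability-l2-projection}), and the patch-by-patch construction of $\mathcal{L}_{\dive, N}$ in \cref{sec:invert-div} admits analogous localized continuity estimates. Once these patch-local versions are secured, the remainder of the argument reduces to summation over the finitely many elements and vertex patches intersecting $\omega_K$, and to tracking the $\epsilon$-dependence at the endpoints.
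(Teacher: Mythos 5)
Your overall architecture (quasi-interpolant plus a divergence-correcting term) matches the paper's, but there is a genuine gap at exactly the point you flag as "the main obstacle," and the fix you sketch does not work. The composition $\mathcal{L}_{\dive, N}\,\mathbb{P}^{Q}_{N-1}\bigl(\dive(\bdd{v} - I_N\bdd{v})\bigr)$ inherits only the \emph{global} bound of \cref{thm:invert-div}, so it yields $|\mathcal{L}_{F,N}\bdd{v}|_{1,p,K}\lesssim \xi_{\mathcal{T}}^{-1}N^{3|1/2-1/p|}|\bdd{v}|_{1,p,\Omega}$ rather than the patch-local estimate over $\omega_K$. The obstruction to localizing $\mathcal{L}_{\dive,N}$ is concrete: its construction in \cref{sec:proof-invert-div} begins with $\mathcal{L}_{\mathrm{avg}}$ (\cref{lem:interp-avg-div}), which matches element averages of the divergence via a global Bogovski\u{\i}-type inverse and a Scott--Zhang interpolant of it; this component is \emph{not} local, and assumption \ref{cond:mesh-no-sing} does nothing to localize it (it only decouples the projection and the vertex-patch corrections). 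The paper's resolution is not a "patch-local analogue" of \cref{thm:invert-div} but a different choice of quasi-interpolant: it uses the auxiliary operator $\mathcal{I}_4$ built on the degrees of freedom of \cref{lem:cg-dofs}, whose edge-moment preservation \cref{eq:degree-N-coarse-interp-1} forces $\int_K \dive(\bdd{v}-\mathcal{I}_4\bdd{v})\,\mathrm{d}\bdd{x}=0$ for every $K$. The residual divergence then has zero element averages, so the non-local $\mathcal{L}_{\mathrm{avg}}$ step is skipped entirely and only the local vertex-patch operators $\mathcal{L}_{N,\vertex{a}}$ (\cref{lem:interp-div-vertices}) and the element-bubble inverses $\mathcal{L}_{\dive,K}$ (\cref{cor:invert-div-element-phys}) are needed. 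A generic Scott--Zhang $I_N$ does not preserve edge normal fluxes, so your corrector's argument generally has nonzero element means and the locality cannot be recovered. (This same mechanism also disposes of the zero-mean issue in the enclosed-flow case, which you leave vague.)

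A secondary problem is the endpoint case $p\in\{1,\infty\}$. Riesz--Thorin interpolation between $p=2$ and "a coarser endpoint bound" is circular: to reach $p=1$ or $p=\infty$ by interpolation you already need a bound at that endpoint, and the interior constants $C(p)$ blow up as $p\to 1,\infty$, so no limit argument is available. The paper instead exploits that the corrector's argument is a \emph{polynomial} of degree $N-1$ and applies the inverse inequality \cref{eq:lp-inverse-tri} to shift from $p=1$ to $1+\epsilon$ (and from $1/\epsilon$ to $\infty$) at the cost of a factor $N^{C\epsilon}$, as in \cref{eq:invert-div-ref-cont-l1-linf}; that is where the $N^{3/2+\epsilon}$ in \cref{eq:fortin-nonprojection-lp-local,eq:fortin-nonprojection-w1p-local} comes from.
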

Note that $\mathcal{L}_{F, N}$ is only a projection on $\bdd{V}^4$, as opposed 
to all of $\bdd{V}^{N}$. Some of the existing theory (e.g. \cite[Remark 
2.13]{Belenki12}) only 
assumes that the Fortin operator is a projection on $\mathcal{P}_1(\Omega)$, 
so $\mathcal{L}_{N, F}$ is suitable to apply these results. Here, 
\ref{cond:mesh-no-sing} is crucial to ensure that $\|\mathcal{L}_{F, N} 
\bdd{v}\|_{1, p, K}$ depends only on $\bdd{v}$ in one patch of elements around 
$K$. In contrast, the estimates for the Fortin operator in \cite[Lemma 
A.11]{Tscherpel18} depend on multiple layers of patches, as only 
\ref{cond:mesh-no-odd-sing} is assumed.

By modifying one step in the construction of $\mathcal{L}_{F, N}$, we obtain a 
local Fortin operator that is also a projection operator on all of 
$\bdd{V}^N$, which is the more common assumption on the Fortin operator, 
at the expense of implicit dependence on the polynomial degree in the 
estimates.
\begin{theorem}
	\label{thm:fortin-projection}
	Suppose that the mesh satisfies \ref{cond:mesh-no-sing} and let $N \geq 
	4$. There exists a linear operator $\mathcal{I}_{N, F} : 
	\bdd{W}^{1,1}(\Omega) \to \bdd{V}^N$ satisfying the following for $\bdd{v} 
	\in \bdd{W}^{1,1}(\Omega)$:
	\begin{subequations}
		\begin{alignat}{2}
			\label{eq:fortin-projection-l2-projection}
			\int_{\Omega} (\dive \mathcal{I}_{N, F} \bdd{v}) q \d{\bdd{x}} &= 
			\int_{\Omega} (\dive \bdd{v}) q \d{\bdd{x}} \qquad & &\forall q 
			\in DG^{N-1}(\mathcal{T}), \\
			\mathcal{I}_{N, F} \bdd{v}|_{\Gamma_0} &= \bdd{0} \qquad & & 
			\text{if } \bdd{v} \in \bdd{W}^{1,1}_{\Gamma}(\Omega), \\
			\mathcal{I}_{N, F} \bdd{v} &= \bdd{v} \qquad & &\text{if } \bdd{v} 
			\in \bdd{V}^N.
		\end{alignat}	
	\end{subequations}
	Moreover, for all $p \in [1,\infty]$ and $\bdd{v} \in 
	\bdd{W}^{1,p}(\Omega)$, there holds
	\begin{subequations}
		\begin{alignat}{2}
			\label{eq:fortin-projection-lp-local}
			\| \mathcal{I}_{N, F} \bdd{v} \|_{p, K} &\leq C(N, p) \left( \|  
			\bdd{v} \|_{p, \omega_K} + h_K \xi_{\mathcal{T}}^{-1} |  \bdd{v} 
			|_{1, p, \omega_K} \right) \qquad & &\forall K \in \mathcal{T}, \\
			\label{eq:fortin-projection-w1p-local}
			| \mathcal{I}_{N, F} \bdd{v} |_{1, p, K} &\leq C(N, p)  
			\xi_{\mathcal{T}}^{-1}  | \bdd{v} |_{1, p, \omega_K}  \qquad & 
			&\forall K \in \mathcal{T},
		\end{alignat}
	\end{subequations}
	where $C(N, p)$ also depends on $\Omega$, $\Gamma_0$, and shape 
	regularity. 
\end{theorem}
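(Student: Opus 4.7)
The plan is to adapt the construction of $\mathcal{L}_{F, N}$ from the proof of \cref{thm:fortin-nonprojection}, modifying only the initial interpolation step. In \cref{thm:fortin-nonprojection}, that step uses an interpolant which is only a projection on $\bdd{V}^4$ — it is precisely this lower-order requirement that lets the continuity constants be made explicit in $N$. To obtain a projection on the full space $\bdd{V}^N$, I would replace it by a local Scott–Zhang-type interpolant $I_h : \bdd{W}^{1,1}(\Omega) \to \bdd{V}^N$ (with $I_h \bdd{v}|_{\Gamma_0} = \bdd{0}$ whenever $\bdd{v} \in \bdd{W}^{1,1}_{\Gamma}(\Omega)$) built from a dual basis to the \emph{full} degree-$N$ local shape functions, so that $I_h \bdd{v} = \bdd{v}$ for every $\bdd{v} \in \bdd{V}^N$. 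Such an $I_h$ has local $L^p$- and $W^{1,p}$-stability on each patch $\omega_K$ with a constant depending on $N$ and $p$ through the norms of the reference-element dual basis.

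With $I_h$ in hand, I would then set
\begin{equation*}
\mathcal{I}_{N,F} \bdd{v} := I_h \bdd{v} + \mathcal{L}_{\dive, N}\bigl(\mathbb{P}_{N-1}^{Q}\dive(\bdd{v} - I_h \bdd{v})\bigr),
\end{equation*}
proceeding exactly as in the proof of \cref{thm:fortin-nonprojection} but with the enhanced $I_h$. Here $\mathcal{L}_{\dive, N}$ is the right-inverse from \cref{thm:invert-div} (extended to the full pressure space under \ref{cond:mesh-no-sing}, where $Q^{N-1}$ coincides with $DG^{N-1}(\mathcal{T})$ up to the enclosed-flow mean constraint). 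The relation $\dive I_h \bdd{v} \in DG^{N-1}(\mathcal{T})$ together with $\dive \mathcal{L}_{\dive, N} r = r$ yields the divergence-preservation property \cref{eq:fortin-projection-l2-projection}; boundary preservation on $\Gamma_0$ follows from the corresponding properties of $I_h$ and $\mathcal{L}_{\dive, N}$; and the projection identity $\mathcal{I}_{N,F}\bdd{v} = \bdd{v}$ for $\bdd{v} \in \bdd{V}^N$ is immediate because $I_h \bdd{v} = \bdd{v}$ makes the correction term vanish.

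The local estimates \cref{eq:fortin-projection-lp-local,eq:fortin-projection-w1p-local} would then follow by combining the local stability of $I_h$, the bound $\|\mathcal{L}_{\dive, N} r\|_{1,p,\Omega} \leq C(p) \xi_{\mathcal{T}}^{-1} \|r\|_{p,\Omega}$ from \cref{thm:invert-div} applied with $r = \mathbb{P}_{N-1}^{Q}\dive(\bdd{v} - I_h \bdd{v})$, and a triangle inequality on $\bdd{v} - I_h \bdd{v}$. The hypothesis \ref{cond:mesh-no-sing} is used, exactly as in \cref{thm:fortin-nonprojection}, to guarantee that the divergence correction can be assembled from vertex-patch contributions so that $\mathcal{I}_{N,F}\bdd{v}|_K$ depends only on $\bdd{v}|_{\omega_K}$. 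The main obstacle is verifying the local $L^p$- and $W^{1,p}$-stability of the degree-$N$ projection interpolant $I_h$; since only an implicit $C(N,p)$-dependence is claimed, this reduces to a standard finite-dimensional equivalence-of-norms argument on the reference element combined with scaling and shape regularity.
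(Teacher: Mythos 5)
Your overall strategy is the paper's: the published proof simply reruns the construction of $\mathcal{L}_{F,N}$ from \cref{thm:fortin-nonprojection} with the low-order interpolant $\mathcal{I}_4$ replaced by the degree-$N$ interpolant $\mathcal{I}_N$, which is a projection on all of $\bdd{V}^N$; your formula $\mathcal{I}_{N,F}\bdd{v} = I_h\bdd{v} + (\text{divergence correction applied to } \dive(\bdd{v}-I_h\bdd{v}))$ is the same construction in different packaging. However, there is a concrete gap in how you specify $I_h$. You describe it as a generic dual-basis Scott--Zhang projection onto $\bdd{V}^N$, which only guarantees $I_h\bdd{v}=\bdd{v}$ on $\bdd{V}^N$. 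The localization of the correction term --- which is what the local estimates \cref{eq:fortin-projection-lp-local,eq:fortin-projection-w1p-local} require --- hinges on the additional property that $I_h$ preserves the edge fluxes $\int_e \bdd{v}\cdot\unitvec{n}\d{s}$, equivalently that $\int_K \dive(\bdd{v}-I_h\bdd{v})\d{\bdd{x}}=0$ for every $K$. Only then does the residual $q_0$ have zero mean on each element, so that the global averaging step $\mathcal{L}_{\mathrm{avg}}$ of \cref{lem:interp-avg-div} can be skipped and the correction assembled purely from the vertex-patch operators $\mathcal{L}_{N,\vertex{a}}$ and the element-local inverses $\mathcal{L}_{\dive,K}$ of \cref{cor:invert-div-element-phys} (whose domain is $Q_0^{N-1}(K)\subset L_0^1(K)$, so zero element means are not optional). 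A generic dual-basis interpolant does not have this property, and if you instead invoke $\mathcal{L}_{\dive,N}$ from \cref{thm:invert-div} as a black box, as your displayed formula and your citation of its global bound suggest, you obtain only a global stability estimate, not the patch-local ones claimed.

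The paper closes exactly this gap by constructing $\mathcal{I}_N$ from the specific unisolvent set of degrees of freedom in \cref{lem:cg-dofs}, which includes the edge moments $\int_e \bdd{v}\cdot\bdd{r}\d{s}$ for $\bdd{r}\in\mathcal{P}_{N-4}(e)^2$; matching these (property \cref{eq:degree-N-coarse-interp-1}) gives the flux preservation via the divergence theorem, and matching the interior divergence moments against $Q_0^{N-1}(K)$ keeps the correction small. To repair your argument you should either adopt this DOF-based interpolant or explicitly add edge-moment preservation to the requirements on $I_h$ and verify that such an $I_h$ exists with the stated local stability; the rest of your outline then goes through.
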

In the case that $\Gamma_0 = \partial \Omega$, \cref{thm:fortin-projection}, 
we recover \cite[Lemma A.11]{Tscherpel18}, albeit with a different underlying 
construction. In the 
next section, we apply the results for inf-sup 
stability (\cref{thm:inf-sup-stability}) and the existence of local Fortin 
operators to obtain a priori error estimates for the $p$-Stokes problem.

	\section{Application to the $p$-Stokes system}
\label{sec:p-stokes-app}

To demonstrate how our results may be applied to non-Newtonian flows, we 
consider the $p$-Stokes problem with $p \in (1,\infty)$ in weak form: Find 
$(\bdd{u}, q) \in \bdd{W}^{1, p}_{\Gamma}(\Omega) \times 
L^{p'}_{\Gamma}(\Omega)$ such that
\begin{subequations}
	\label{eq:p-stokes-weak}
	\begin{alignat}{2}
		\label{eq:p-stokes-weak-1}
		(\nu \tensorbd{S}(\symgrad{\bdd{u}}), \symgrad{\bdd{v}}) - (\dive 
		\bdd{v}, 
		q) &= L(\bdd{v}) \qquad 
		& &\forall \bdd{v} \in \bdd{W}^{1, p}_{\Gamma}(\Omega), \\
		\label{eq:p-stokes-weak-2}
		-(\dive \bdd{u}, r) &= 0 \qquad & &\forall r \in 
		L^{p'}_{\Gamma}(\Omega).
	\end{alignat}
\end{subequations} 
where $\nu > 0$ is constant and $L$ belongs to the dual space of 
$\bdd{W}^{1,p}_{\Gamma}(\Omega)$. We 
recall that $\symgrad{\bdd{v}}$ is the symmetric part of the gradient of 
$\bdd{v}$ and the shear stress $\tensorbd{S}$ is defined in 
\cref{eq:extra-stress-tensor}. The Scott-Vogelius discretization of 
\cref{eq:p-stokes-weak} then reads: Find 
$(\bdd{u}_N, q_{N-1}) \in \bdd{V}^N_{\Gamma} \times Q^{N-1}_{\Gamma}$, $N \geq 
4$, such that
\begin{subequations}
	\label{eq:p-stokes-fem}
	\begin{alignat}{2}
		\label{eq:p-stokes-fem-1}
		(\nu \tensorbd{S}(\symgrad{\bdd{u}_N}), \symgrad{\bdd{v}}) - (\dive 
		\bdd{v}, q_{N-1}) &= L(\bdd{v})  \qquad & &\forall \bdd{v} \in 
		\bdd{V}^{N}_{\Gamma}, \\
		\label{eq:p-stokes-fem-2}
		-(\dive \bdd{u}_N, r) &= 0 \qquad & &\forall r \in 
		Q^{N-1}_{\Gamma}.
	\end{alignat}
\end{subequations} 
The proof of well-posedness of problems 
\cref{eq:p-stokes-weak,eq:p-stokes-fem} is 
standard (see e.g. \cite[\S 2]{BarrettLiu94}). We note that Korn's inequality 
gives the following stability estimate for $\bdd{u}$ and $\bdd{u}_N$:
\begin{align}
	\label{eq:p-stokes-u-un-bounded}
	\nu \max \left\{ \| \bdd{u} \|_{1, p, \Omega}, \| \bdd{u}_N \|_{1, p, 
		\Omega} \right\}^{p-1} 
	\leq  
	C(\Omega, \Gamma_0) \sup_{ \bdd{v} \in \bdd{W}^{1, p}(\Omega) } \frac{ 
		|L(\bdd{v})| }{ \|\bdd{v}\|_{1, p, \Omega} }.  
\end{align}

\subsection{Pressure-robust error estimates}
\label{sec:p-stokes-error-estimates}

We now proceed similarly as in \cite[\S 4]{Diening25} to obtain pressure 
robust error estimates. To this end, we introduce the auxiliary tensor
\begin{align}
	\label{eq:f-tensor-def}
	\tensorbd{F}(\tensorbd{A}) := \sqrt{|\tensorbd{A}|^{p-2}} \tensorbd{A} 
	\qquad \forall \tensorbd{A} \in \mathbb{R}^{2 \times 2},
\end{align}
which is used to define the ``natural velocity distance" \cite{Belenki12} for 
the $p$-Stokes problem $\| \tensorbd{F}(\symgrad{\bdd{u}}) - 
\tensorbd{F}(\symgrad{\bdd{v}}) \|_{2, \Omega}$. 

\begin{lemma}
	\label{lem:p-stokes-quasi-opt}
	Let $\bdd{u} \in \bdd{W}^{1, p}_{\Gamma}(\Omega)$ and $q \in 
	L_{\Gamma}^{p'}(\Omega)$ denote the solution \cref{eq:p-stokes-weak} and 
	$\bdd{u}_N \in \bdd{V}^N_{\Gamma}$ and $q_{N-1} \in Q^{N-1}_{\Gamma}$ the 
	solution to \cref{eq:p-stokes-fem} with $N \geq 4$. Then, there holds
	\begin{subequations}
		\begin{align}
			\label{eq:p-stokes-u-quasi-opt}
			\| \tensorbd{F}(\symgrad{\bdd{u}}) - 
			\tensorbd{F}(\symgrad{\bdd{u}_N}) 
			\|_{2, \Omega} 
			&\leq C(p) 
			\inf_{ \substack{ \bdd{v} \in \bdd{V}^N_{\Gamma} \\ 
					\dive \bdd{v} = 0 } }  
			\| \tensorbd{F}(\symgrad{\bdd{u}}) - \tensorbd{F}(\tensorbd{D} 
			\bdd{v}) \|_{2, \Omega}
		\end{align}
		and
		\begin{align}
			\label{eq:p-stokes-q-quasi-opt-s}
			\|q - q_{N-1}\|_{p', \Omega} &\leq \frac{C}{\beta_{N, p, \Gamma}} 
			\left( 
			\nu \| 
			\tensorbd{S}(\symgrad{\bdd{u}}) - \tensorbd{S}(\tensorbd{D} 
			\bdd{u}_N)\|_{p', \Omega} + \inf_{r \in Q^{N-1}_{\Gamma}} \|q - 
			r\|_{p', \Omega} \right), 
		\end{align}
	\end{subequations}
	where $\beta_{N, p, \Gamma}$ is defined in \cref{eq:inf-sup-sv-def}. 
	Moreover, there holds
	\begin{multline}
		\label{eq:p-stokes-q-quasi-opt-f}
		\|q - q_{N-1}\|_{p', \Omega} \\
		\leq \frac{C(p, L)}{\beta_{N, p, \Gamma}} \left( \nu \| 
		\tensorbd{F}(\symgrad{\bdd{u}}) - \tensorbd{F}(\tensorbd{D} 
		\bdd{u}_N)\|_{2, \Omega}^{\min \left\{ 1, \frac{2}{p'} \right\}} + 
		\inf_{r \in Q^{N-1}_{\Gamma}} \|q - 
		r\|_{p', \Omega} \right).
	\end{multline}
\end{lemma}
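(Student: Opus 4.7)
The plan is to follow the by-now-standard pressure-robust quasi-optimality framework for $p$-Stokes discretizations (cf. \cite{Belenki12,Diening25}), leveraging the exact pointwise incompressibility afforded by Scott-Vogelius. Indeed, since $\dive \bdd{V}^N_\Gamma = Q^{N-1}_\Gamma$, the constraint \cref{eq:p-stokes-fem-2} implies $\dive \bdd{u}_N = 0$ pointwise, and of course $\dive \bdd{u} = 0$ as well.

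For the velocity estimate \cref{eq:p-stokes-u-quasi-opt}, I would subtract \cref{eq:p-stokes-fem-1} from \cref{eq:p-stokes-weak-1} and test with an arbitrary $\bdd{v} \in \bdd{V}^N_\Gamma$ satisfying $\dive \bdd{v} = 0$. The pressure term drops out entirely (this is the source of pressure robustness), yielding the Galerkin orthogonality
\begin{align*}
\nu\bigl(\tensorbd{S}(\symgrad \bdd{u}) - \tensorbd{S}(\symgrad \bdd{u}_N),\ \symgrad \bdd{u}_N - \symgrad \bdd{v}\bigr) = 0.
\end{align*}
Now I would invoke the standard equivalence $(\tensorbd{S}(\tensorbd{A}) - \tensorbd{S}(\tensorbd{B})):(\tensorbd{A} - \tensorbd{B}) \sim |\tensorbd{F}(\tensorbd{A}) - \tensorbd{F}(\tensorbd{B})|^2$ together with its companion upper bound $|(\tensorbd{S}(\tensorbd{A})-\tensorbd{S}(\tensorbd{B})):\tensorbd{C}| \lesssim |\tensorbd{F}(\tensorbd{A})-\tensorbd{F}(\tensorbd{B})|\,(|\tensorbd{F}(\tensorbd{A})-\tensorbd{F}(\tensorbd{C})|+|\tensorbd{F}(\tensorbd{B})-\tensorbd{F}(\tensorbd{C})|)$, together with Korn's inequality for $\bdd{W}^{1,p}_\Gamma(\Omega)$, and close by Young's inequality. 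Taking the infimum over divergence-free $\bdd{v} \in \bdd{V}^N_\Gamma$ gives \cref{eq:p-stokes-u-quasi-opt}.

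For the pressure estimate \cref{eq:p-stokes-q-quasi-opt-s}, I proceed via the standard inf-sup argument. For arbitrary $r \in Q^{N-1}_\Gamma$, a triangle inequality reduces matters to bounding $\|r - q_{N-1}\|_{p',\Omega}$. Applying the inf-sup condition \cref{eq:inf-sup-sv-def} to $r - q_{N-1} \in Q^{N-1}_\Gamma$ and using the Galerkin equations to rewrite $(\dive \bdd{v},\,r - q_{N-1}) = (\dive \bdd{v},\,r-q) + \nu(\tensorbd{S}(\symgrad \bdd{u}_N) - \tensorbd{S}(\symgrad \bdd{u}),\,\symgrad \bdd{v})$ for $\bdd{v} \in \bdd{V}^N_\Gamma$ produces the desired bound after Hölder's inequality and taking the infimum over $r$. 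The estimate \cref{eq:p-stokes-q-quasi-opt-f} then follows from \cref{eq:p-stokes-q-quasi-opt-s} by the classical interpolation inequality
\begin{align*}
\|\tensorbd{S}(\symgrad \bdd{u}) - \tensorbd{S}(\symgrad \bdd{u}_N)\|_{p',\Omega} \leq C(p)\, M^{1 - \min\{1,\,2/p'\}}\, \|\tensorbd{F}(\symgrad \bdd{u}) - \tensorbd{F}(\symgrad \bdd{u}_N)\|_{2,\Omega}^{\min\{1,\,2/p'\}},
\end{align*}
where $M$ depends on $\|\bdd{u}\|_{1,p,\Omega} + \|\bdd{u}_N\|_{1,p,\Omega}$; by the a priori bound \cref{eq:p-stokes-u-un-bounded}, this latter quantity is controlled by $L$, which is absorbed into $C(p,L)$.

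The main obstacle, as usual in the $p$-Stokes setting, is the correct bookkeeping of the shifted/Orlicz-type constants in the inequalities relating $\tensorbd{S}$- and $\tensorbd{F}$-differences, particularly in the subquadratic regime $p < 2$, where the exponent $\min\{1,2/p'\}$ in \cref{eq:p-stokes-q-quasi-opt-f} is non-trivial. All of these estimates are standard and can be cited from \cite{Belenki12,Diening13,Diening25}; no new machinery beyond the inf-sup lower bound \cref{eq:inf-sup-stability} already established in \cref{thm:inf-sup-stability} is required here.
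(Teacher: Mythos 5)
Your proposal is correct and follows essentially the same route as the paper: Galerkin orthogonality on discretely divergence-free test functions combined with the $\tensorbd{S}$--$\tensorbd{F}$ equivalences of Belenki et al.\ and Young's inequality for \cref{eq:p-stokes-u-quasi-opt}, the standard inf-sup/triangle-inequality argument for \cref{eq:p-stokes-q-quasi-opt-s}, and the $p$-dependent bound of the $\tensorbd{S}$-difference by the $\tensorbd{F}$-difference together with the a priori estimate \cref{eq:p-stokes-u-un-bounded} for \cref{eq:p-stokes-q-quasi-opt-f}. No substantive differences from the paper's proof.
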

\begin{proof}
	
	\noindent \textbf{Step 1: \cref{eq:p-stokes-u-quasi-opt}. }
	Note that choosing $\bdd{w} \in \bdd{V}^{N}_{\Gamma}$ with $\dive \bdd{w} 
	= 0$ 
	in \cref{eq:p-stokes-weak-1,eq:p-stokes-fem-1} gives
	\begin{align}
		\label{eq:proof:s-diff-oroth-div-free}
		(\tensorbd{S}(\symgrad{\bdd{u}}) - \tensorbd{S}(\tensorbd{D} 
		\bdd{u}_N), \tensorbd{D} \bdd{w} ) = 0.
	\end{align}
	Now let $\bdd{v} \in \bdd{V}^{N}_{\Gamma}$ with $\dive \bdd{v} = 
	0$. Applying \cite[Lemma 2.5]{Belenki12}, we obtain
	\begin{align*}
		\alpha(p) \| \tensorbd{F}(\symgrad{\bdd{u}}) - 
		\tensorbd{F}(\symgrad{\bdd{u}_N})  \|_{2, \Omega}^2 
		&\leq (\tensorbd{S}(\symgrad{\bdd{u}}) - \tensorbd{S}(\tensorbd{D} 
		\bdd{u}_N), \symgrad{\bdd{u}} - \symgrad{\bdd{u}_N}) \\
		&= (\tensorbd{S}(\symgrad{\bdd{u}}) - \tensorbd{S}(\tensorbd{D} 
		\bdd{u}_N), \symgrad{\bdd{u}} - \tensorbd{D} \bdd{v}),
	\end{align*} 
	where we used \cref{eq:proof:s-diff-oroth-div-free} and that $\dive 
	\bdd{u}_N = 0$ by \cref{eq:p-stokes-fem-2}. Thanks to 
	\cite[Lemma 2.7]{Belenki12}, there exists $C(p)$ such that
	\begin{multline*}
		|(\tensorbd{S}(\symgrad{\bdd{u}}) - \tensorbd{S}(\tensorbd{D} 
		\bdd{u}_N), \symgrad{\bdd{u}} - \tensorbd{D} \bdd{v})| \\
		\leq 
		\frac{\alpha(p)}{2}  \| 
		\tensorbd{F}(\symgrad{\bdd{u}}) - \tensorbd{F}(\tensorbd{D} 
		\bdd{u}_N)\|_{2, \Omega}^2 + C(p) \| 
		\tensorbd{F}(\symgrad{\bdd{u}}) - \tensorbd{F}(\tensorbd{D} 
		\bdd{v})\|_{2, \Omega}^2,
	\end{multline*}
	and inequality \cref{eq:p-stokes-u-quasi-opt} now follows from taking the 
	infimum over all such $\bdd{v}$.
	
	\noindent \textbf{Step 2: \cref{eq:p-stokes-q-quasi-opt-s}. } Let $r \in 
	Q^{N-1}_{\Gamma}$. The inf-sup condition \cref{eq:inf-sup-sv-def} and 
	\cref{eq:p-stokes-weak-1,eq:p-stokes-fem-1} give
	\begin{align*}
		\beta_{N, p, \Gamma} \|q_{N-1} - r\|_{p', \Omega} 
		&\leq \sup_{\bdd{v} \in \bdd{V}^N_{\Gamma}} 
		\frac{ -(\dive \bdd{v}, q_{N-1} - r) }{\|\bdd{v}\|_{1, p}} \\
		&= \sup_{\bdd{v} \in \bdd{V}^N_{\Gamma}} 
		\frac{ \nu ( \tensorbd{S}(\symgrad{\bdd{u}}) - 
			\tensorbd{S}(\tensorbd{D} 
			\bdd{u}_N), \tensorbd{D} \bdd{v} ) - (\dive \bdd{v}, q - r) 
		}{\|\bdd{v}\|_{1, p}} \\
		&\leq C \left(  \nu \| 
		\tensorbd{S}(\symgrad{\bdd{u}}) - \tensorbd{S}(\tensorbd{D} 
		\bdd{u}_N)\|_{p', \Omega} + \|q - r\|_{p', \Omega} \right),
	\end{align*}
	and inequality \cref{eq:p-stokes-q-quasi-opt-s} follows from the triangle 
	inequality and taking the infimum over all $r \in Q^{N-1}_{\Gamma}$.
	
	\noindent \textbf{Step 3: \cref{eq:p-stokes-q-quasi-opt-f}. } Lemma 4.6 
	from \cite{Belenki12} shows that if $p \in (2,\infty)$, then
	\begin{align*}
		\| \tensorbd{S}(\symgrad{\bdd{u}}) - \tensorbd{S}(\tensorbd{D} 
		\bdd{u}_N)\|_{p', \Omega}
		\leq C(p) \| |\symgrad{\bdd{u}}| + |\symgrad{\bdd{u}_N}| 
		\|_{p, \Omega}^{\frac{p}{2}-1}  \| \tensorbd{F}(\symgrad{\bdd{u}}) 
		- 
		\tensorbd{F}(\symgrad{\bdd{u}_N})\|_{2, \Omega},
	\end{align*}
	while for $p \in (1, 2)$, there holds
	\begin{align}
		\label{eq:proof:p-stokes-extra-stress-by-natural-dist}
		\| \tensorbd{S}(\symgrad{\bdd{u}}) - \tensorbd{S}(\tensorbd{D} 
		\bdd{u}_N)\|_{p', \Omega}
		\leq C(p) \| \tensorbd{F}(\symgrad{\bdd{u}}) - 
		\tensorbd{F}(\symgrad{\bdd{u}_N})\|_{2, \Omega}^{\frac{2}{p'}},
	\end{align}
	and so inequality \cref{eq:p-stokes-q-quasi-opt-f} follows from 
	\cref{eq:p-stokes-u-un-bounded}.		
\end{proof}

\Cref{lem:p-stokes-quasi-opt} shows that when using the ``natural velocity 
distance," one obtains a quasi-optimal best approximation of the velocity 
\cref{eq:p-stokes-u-quasi-opt}, which reduces to the pressure-robust error 
estimates of \cite{JohnLinkeMerdonNeilReb17} in the linear case ($p=2$). Note 
that the pressure error estimate \cref{eq:p-stokes-q-quasi-opt-s} also 
involves the 
stress error, while only the estimates in the natural velocity distance 
are 
quasi-optimal. In the linear case, $\tensorbd{F}(\symgrad{\bdd{u}})$ and 
$\tensorbd{S}(\symgrad{\bdd{u}})$ are 
identical, and so one obtains optimal rates of convergence of the pressure. In 
the nonlinear case, $\tensorbd{F}(\symgrad{\bdd{u}}) \neq 
\tensorbd{S}(\symgrad{\bdd{u}})$, and so bounding the 
additional stress error by the natural velocity distance results in the 
$\min\{1, 
2/p'\}$ exponent in \cref{eq:p-stokes-q-quasi-opt-f}. 

To obtain rates of convergence, we apply the approximation results from 
\cite{Belenki12}. In particular, when the mesh contains no singular vertices 
($\mathcal{V}_S = \emptyset$), the Fortin operators in 
\cref{thm:fortin-nonprojection,thm:fortin-projection} satisfy 
\cite[Assumptions 2.9 \& 2.11]{Belenki12}, and so applying \cite[Theorem 
5.1]{Belenki12} and standard approximation results show that the method 
\cref{eq:p-stokes-fem} converges linearly under suitable regularity.	
\begin{corollary}
	\label{cor:p-stokes-linear-convergence}
	Suppose that $\mathcal{V}_S = \emptyset$ and let $\bdd{u}$, $q$, 
	$\bdd{u}_N$, and $q_{N-1}$ be as in \cref{lem:p-stokes-quasi-opt}.
	If $\tensorbd{F}(\symgrad{\bdd{u}}) \in \tensorbd{W}^{1,2}(\Omega)$, then 
	there holds
	\begin{align}
		\label{eq:p-stokes-linear-natural-norm-conv}
		\| \tensorbd{F}(\symgrad{\bdd{u}}) - \tensorbd{F}(\symgrad{\bdd{u}_N}) 
		\|_{2, \Omega} &\leq C(p) h \|\grad 
		\tensorbd{F}(\symgrad{\bdd{u}})\|_{2, \Omega}.
	\end{align}
	If moreover, $q \in W^{1, p'}(\Omega)$, then there holds
	\begin{align}
		\label{eq:p-stokes-linear-pressure-conv}
		\|q - q_{N-1}\|_{p',\Omega} \leq  \frac{C(p, L) 
		}{\beta_{N, p, \Gamma}}  \left( \left(h \|\grad 
		\tensorbd{F}(\symgrad{\bdd{u}})\|_{2, 
			\Omega}\right)^{\min \left\{ 1, \frac{2}{p'} \right\}} + h
		\|\grad q\|_{p', \Omega} \right).
	\end{align}
\end{corollary}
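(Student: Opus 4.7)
The plan is to feed the quasi-optimal bounds of \cref{lem:p-stokes-quasi-opt} into an $\tensorbd{F}$-weighted approximation estimate for a suitable divergence-preserving interpolant of $\bdd{u}$, and into a standard $L^{p'}$ approximation estimate for $q$.

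For the velocity bound \cref{eq:p-stokes-linear-natural-norm-conv}, I would choose the candidate $\bdd{v} = \mathcal{L}_{F, N} \bdd{u} \in \bdd{V}^N_{\Gamma}$ in the infimum on the right-hand side of \cref{eq:p-stokes-u-quasi-opt}, using the Fortin operator from \cref{thm:fortin-nonprojection}. Because $\dive \bdd{u} = 0$ by \cref{eq:p-stokes-weak-2}, the commuting property \cref{eq:fortin-nonprojection-l2-projection} yields $\int_\Omega (\dive \mathcal{L}_{F, N} \bdd{u})\, q \d{\bdd{x}} = 0$ for every $q \in DG^{N-1}(\mathcal{T})$; since $\dive \mathcal{L}_{F, N} \bdd{u}$ itself lies in $DG^{N-1}(\mathcal{T})$, testing against $q = \dive \mathcal{L}_{F, N} \bdd{u}$ gives $\dive \mathcal{L}_{F, N} \bdd{u} = 0$. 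So $\mathcal{L}_{F, N} \bdd{u}$ is an admissible competitor. The local stability estimates \cref{eq:fortin-nonprojection-lp-local}--\cref{eq:fortin-nonprojection-w1p-local} specialized to $p = 2$ on $\omega_K$, together with the fact that $\mathcal{L}_{F, N}$ preserves affine vector fields (since $\bdd{V}^4 \supset \mathcal{P}_1$), put $\mathcal{L}_{F, N}$ in the setting of \cite[Assumptions 2.9 \& 2.11]{Belenki12}. Applying \cite[Theorem 5.1]{Belenki12} then furnishes
\begin{align*}
\| \tensorbd{F}(\symgrad{\bdd{u}}) - \tensorbd{F}(\symgrad{\mathcal{L}_{F, N} \bdd{u}}) \|_{2, \Omega} \leq C(p)\, h\, \|\grad \tensorbd{F}(\symgrad{\bdd{u}})\|_{2, \Omega},
\end{align*}
and \cref{eq:p-stokes-linear-natural-norm-conv} follows.

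For the pressure bound \cref{eq:p-stokes-linear-pressure-conv}, I would start from \cref{eq:p-stokes-q-quasi-opt-f}. The first term on its right-hand side is controlled by raising \cref{eq:p-stokes-linear-natural-norm-conv} to the power $\min\{1, 2/p'\}$. For the second term, the hypothesis $\mathcal{V}_S = \emptyset$ combined with \cref{eq:q-characterization} identifies $Q^{N-1}_{\Gamma}$ with $DG^{N-1}(\mathcal{T})$ (subtracting the mean in the enclosed-flow case), so the infimum over $Q^{N-1}_{\Gamma}$ is realized up to a constant by the piecewise $L^2$-projection onto $DG^{N-1}(\mathcal{T})$, for which the standard local Bramble-Hilbert estimate yields $\inf_{r \in Q^{N-1}_{\Gamma}} \|q - r\|_{p', \Omega} \leq C\, h\, \|\grad q\|_{p', \Omega}$. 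Inserting both into \cref{eq:p-stokes-q-quasi-opt-f} gives the stated pressure estimate.

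The only real technical point I expect is verifying that $\mathcal{L}_{F, N}$ satisfies the hypotheses used in \cite[Theorem 5.1]{Belenki12}; the $N$-explicit factors appearing in \cref{eq:fortin-nonprojection-lp-local}--\cref{eq:fortin-nonprojection-w1p-local} are harmless here because $N$ is being held fixed for the $h$-convergence statement, so all constants are allowed to depend on $N$ (as well as on $p$, $\Omega$, $\Gamma_0$, and shape regularity). Everything else is standard: Korn's inequality has already been absorbed into \cref{eq:p-stokes-u-un-bounded} in \cref{lem:p-stokes-quasi-opt}, the passage from $\tensorbd{S}$- to $\tensorbd{F}$-differences was done in Step 3 of the proof of that lemma, and the removal of the mean-value constraint in the pressure approximation follows from $|\Omega| < \infty$.
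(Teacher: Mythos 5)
Your proposal is correct and follows essentially the same route as the paper, which likewise verifies that the Fortin operators of \cref{thm:fortin-nonprojection,thm:fortin-projection} satisfy \cite[Assumptions 2.9 \& 2.11]{Belenki12} under \ref{cond:mesh-no-sing}, invokes \cite[Theorem 5.1]{Belenki12} for the velocity bound, and combines \cref{eq:p-stokes-q-quasi-opt-f} with standard piecewise-polynomial approximation of $q$ for the pressure bound. You in fact supply more detail than the paper does (e.g., the argument that $\dive \mathcal{L}_{F,N}\bdd{u}=0$ and the treatment of the mean-value constraint), and your observations are accurate.
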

\noindent In general, obtaining higher rates of convergence in the natural 
distance norm assuming sufficient regularity remains an open problem. 

We also note in passing that $\bdd{W}^{1,p}(\Omega)$ error estimates for the 
velocity can also be obtained using monotone operator theory; e.g. 
\cite[Theorems 3 \& 4]{Chow89} gives 	
\begin{align}
	\label{eq:p-stokes-u-w1p}
	\|\bdd{u} - \bdd{u}_{N}\|_{1, p, \Omega} \leq \begin{cases}
		C(p) \inf\limits_{ \substack{ \bdd{v} \in \bdd{V}^N_{\Gamma} \\ \dive 
				\bdd{v} \equiv 0 }} \|\bdd{u} - \bdd{v}\|_{1, 
			p, \Omega}^{p/2} & \text{if } p \in (1,2], \\
		C(p) \inf\limits_{ \substack{ \bdd{v} \in \bdd{V}^N_{\Gamma} 
				\\ \dive \bdd{v} \equiv 0 }} C(\|\bdd{v}\|_{1,p}) \|\bdd{u} - 
		\bdd{v}\|_{1, p, \Omega}^{2/p} & \text{if } p \in (2,\infty).
	\end{cases}
\end{align}
However, these estimates are known to be suboptimal in some cases 
\cite[Corollary 2]{Chow89}.

\begin{remark}
	The proof of \cref{lem:p-stokes-quasi-opt} only required that 
	$\bdd{V}^N_{\Gamma}$ is a conforming subspace of $\bdd{W}^{1, 
		p}_{\Gamma}(\Omega)$ and $Q^{N-1}_{\Gamma} = \dive \bdd{V}^N_{\Gamma}$ 
	(agnostic to the dimension of the problem), while the proof of 
	\cref{cor:p-stokes-linear-convergence} only required a local Fortin 
	operator satisfying \cite[Assumptions 2.9 \& 2.11]{Belenki12}.
\end{remark}

\subsection{Numerical examples}
\label{sec:p-stokes-numerics}

We now turn to two numerical examples to demonstrate the a priori error 
estimates in 
\cref{lem:p-stokes-quasi-opt}. For each example, $\Omega$ is the unit square 
$\Omega 
= (0, 1)^2$ with initial mesh $\mathcal{T}_0$ depicted in \cref{fig:init-mesh} 
and 
Dirichlet boundary conditions are imposed on $\Gamma_0 = \{x = 0\}$. 
We prescribe an exact solution $\bdd{u}$ and $q$ and choose $L$, and the 
boundary conditions accordingly. All computations are performed using 
Firedrake 
\cite{FiredrakeUserManual}. To solve the discrete nonlinear problem 
\cref{eq:p-stokes-fem}, we use the default PETSc 
\cite{PETScUserManual,PETSc11} 
nonlinear solver and 
tolerances: a Newton linesearch method which terminates when the $\ell_2$ norm 
of the 
residual reduces by a factor of $10^{-8}$ or when successive iterates differ 
by less 
than $10^{-8}$ in the $\ell_2$ norm. To initialize the Newton method, we solve 
the 
following Stokes system: Find $(\bdd{u}_{N}, q_{N-1}) \in \bdd{V}^N_{\Gamma} 
\times 
Q^{N-1}_{\Gamma}$ such that
\begin{alignat*}{2}
	(\symgrad{\bdd{u}_N}, \tensorbd{D} \bdd{v}) - (\dive \bdd{v}, q_{N-1}) &= 
	(\symgrad{\bdd{u}}, \tensorbd{D} \bdd{v}) - (\dive \bdd{v}, q) \qquad & 
	&\forall \bdd{v} \in \bdd{V}^N_{\Gamma}, \\
	-(\dive \bdd{u}_N, r) &= 0, \qquad & &\forall r \in 
	Q^{N-1}_{\Gamma}.
\end{alignat*}
We invert all of the resulting linear systems with the default LU solver in 
PETSc. We consider values of $p$ between 1.333 and 4, as the nonlinear solver 
diverges outside of this range. More sophisticated nonlinear solvers tailored 
to this particular nonlinearity are available \cite{Balci23,Heid22}.

To refine the discretization, we either refine the mesh by subdividing every 
element 
into four congruent triangles or increase the polynomial degree. 
Note that on $\mathcal{T}_0$ and subsequent refinements, there are no singular 
vertices and so $Q^{N-1}_{\Gamma}$ coincides with the usual discontinuous 
piecewise polynomial space $DG^{N-1}(\mathcal{T})$ \cref{eq:dg-definition}.	
The four error 
metrics we consider are 
(i) the $W^{1, p}$ velocity error $\|\bdd{u} - \bdd{u}_N\|_{1, p}$, 
(ii) the $L^{p'}$ stress error $\| 
\tensorbd{S}(\symgrad{\bdd{u}}) - \tensorbd{S}(\symgrad{\bdd{u}_N}) \|_{p'}$, 
(iii) the natural velocity distance error 
$\|\tensorbd{F}(\symgrad{\bdd{u}}) - \tensorbd{F}( \symgrad{\bdd{u}_N} )\|_2$,
and (iv) the $L^{p'}$ pressure error $\|q - q_{N-1}\|_{p'}$.

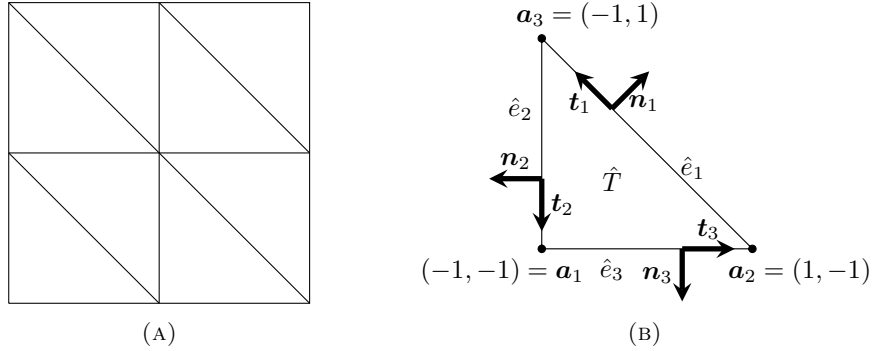
\begin{figure}[htb]
	\centering	
	\begin{subfigure}{0.49\linewidth}
		\centering
		\begin{tikzpicture}[scale=4]
			\coordinate (a1) at (0,0);
			\coordinate (a2) at (1,0);
			\coordinate (a3) at (1,1);
			\coordinate (a4) at (0,1);
			
			\coordinate (b1) at (0.5, 0);
			\coordinate (b2) at (0.5, 0.5);
			\coordinate (b3) at (1, 0.5);
			\coordinate (b4) at (0.5, 1);
			\coordinate (b5) at (0, 0.5);
			
			\draw (a1) -- (a2) -- (a3) -- (a4) -- (a1);
			\draw (b1) -- (b4);
			\draw (b5) -- (b3);
			\draw (a2) -- (a4);
			\draw (b1) -- (b5);
			\draw (b3) -- (b4);				
		\end{tikzpicture}
		\caption{}
		\label{fig:init-mesh}
	\end{subfigure}
	\hfill
	\begin{subfigure}{0.49\linewidth}
		\centering
		\begin{tikzpicture}[scale=0.7]
			\filldraw (0,0) circle (2pt) 
			node[align=center,below]{}
			-- (4,0) circle (2pt)
			node[align=center,below]{}	
			-- (0,4) circle (2pt) 
			node[align=center,above]{}
			-- (0,0);
			
			\draw (-0.75, 0) node[align=center,below]{$(-1,-1) = 
				\vertex{a}_1$};
			\draw (4.9, 0) node[align=center,below]{$\vertex{a}_2 = (1, -1)$};
			\draw (0.875, 4) node[align=center,above]{$\vertex{a}_3 = (-1, 
				1)$};
			
			\coordinate (a1) at (0,0);
			\coordinate (a2) at (4,0);
			\coordinate (a3) at (0,4);
			
			\coordinate (e113) at ($(a2)!1/3!(a3)$);
			\coordinate (e123) at ($(a2)!2/3!(a3)$);
			\coordinate (e1231) at ($(a2)!2/3+1/sqrt(32)!(a3)$);
			
			\coordinate (e213) at ($(a3)!1/3!(a1)$);
			\coordinate (e223) at ($(a3)!2/3!(a1)$);
			\coordinate (e2231) at ($(a3)!2/3+1/4!(a1)$);
			
			\coordinate (e313) at ($(a1)!1/3!(a2)$);
			\coordinate (e323) at ($(a1)!2/3!(a2)$);
			\coordinate (e3231) at ($(a1)!2/3+1/4!(a2)$);
			
			\draw ($(e113)+(0.2,0.2)$) node[align=center]{$\hat{e}_1$};
			\draw ($(e213)+(0,0)$) node[align=center,left]{$\hat{e}_2$};
			\draw ($(e313)+(0,0)$) node[align=center,below]{$\hat{e}_3$};
			
			\draw[line width=2, -stealth] (e123) -- (e1231);
			\draw ($($(e123)!0.5!(e1231)$)+(-0.25,-0.25)$)
			node[align=center]{$\unitvec{t}_1$};
			\draw[line width=2, -stealth] (e123) -- ($(e123)!1!-90:(e1231)$);
			\draw ($($(e123)!0.5!-90:(e1231)$)+(0.25,-0.25)$)
			node[align=center]{$\unitvec{n}_1$};
			\draw[line width=2, -stealth] (e223) -- (e2231);
			\draw ($($(e223)!0.5!(e2231)$)+(0,0)$) 
			node[align=center, right]{$\unitvec{t}_2$};
			\draw[line width=2, -stealth] (e223) -- ($(e223)!1!-90:(e2231)$);
			\draw ($($(e223)!0.5!-90:(e2231)$)+(0,0)$) 
			node[align=center, above]{$\unitvec{n}_2$};
			\draw[line width=2, -stealth] (e323) -- (e3231);
			\draw ($($(e323)!0.5!(e3231)$)+(0,0)$) 
			node[align=center, above]{$\unitvec{t}_3$};
			\draw[line width=2, -stealth] (e323) -- ($(e323)!1!-90:(e3231)$);
			\draw ($($(e323)!0.5!-90:(e3231)$)+(0,0)$) 
			node[align=center, left]{$\unitvec{n}_3$};

			\draw (4/3, 4/3) node(T){$\reftri$};
		\end{tikzpicture}		
		\caption{}
		\label{fig:reference triangle}
	\end{subfigure}
	\caption{Notation for (A) initial mesh $\mathcal{T}_0$ and (B) reference 
		triangle 
		$\reftri$.}
\end{figure}

\subsubsection{Smooth solution}

Consider a smooth solution
\begin{align}
	\label{eq:smooth-exact}
	\bdd{u} = \vcurl ((\mathrm{e}^x -1)^2 \mathrm{e}^y) = (\mathrm{e}^{x} - 1) 
	\mathrm{e}^y \begin{bmatrix}
		-(\mathrm{e}^x - 1) \\
		2\mathrm{e}^x
	\end{bmatrix}  \quad \text{and} \quad q = \mathrm{e}^{x+2y}.
\end{align}
In particular, $\tensorbd{S}(\symgrad{\bdd{u}})$ and 
$\tensorbd{F}(\symgrad{\bdd{u}})$ are smooth in $\Omega$. In 
\cref{fig:smooth-errors}, we display plots of the four error 
metrics and power-law indices $p \in \{1.333, 1.5, 2, 3, 4\}$ for the 
$h$-version method with $N \in \{ 4, 5, 6 \}$ and $p$-version 
method\footnote{Recall that the ``$p$" in ``$p$-version" refers to the 
	polynomial degree and not the index in the power-law.} on 
$\mathcal{T}_0$, where the polynomial degree is varied from 4 to 10. 
\Cref{tab:plot-legends} contains the legend for \cref{fig:smooth-errors}. We 
observe that for the $h$-version method, all of the error metrics converge at 
$\mathcal{O}(h^N)$, while for the $p$-version method, all converge at 
$\mathcal{O}(\exp(-N))$, irrespective of the chosen values of $p$. 
Additional theory is required to explain these rates of convergence in the 
nonlinear setting $p \neq 2$.

\begin{figure}[htb]
	\centering
	\begin{subfigure}[t]{0.48\linewidth}
		\centering
		\includegraphics[width=\linewidth]%
		{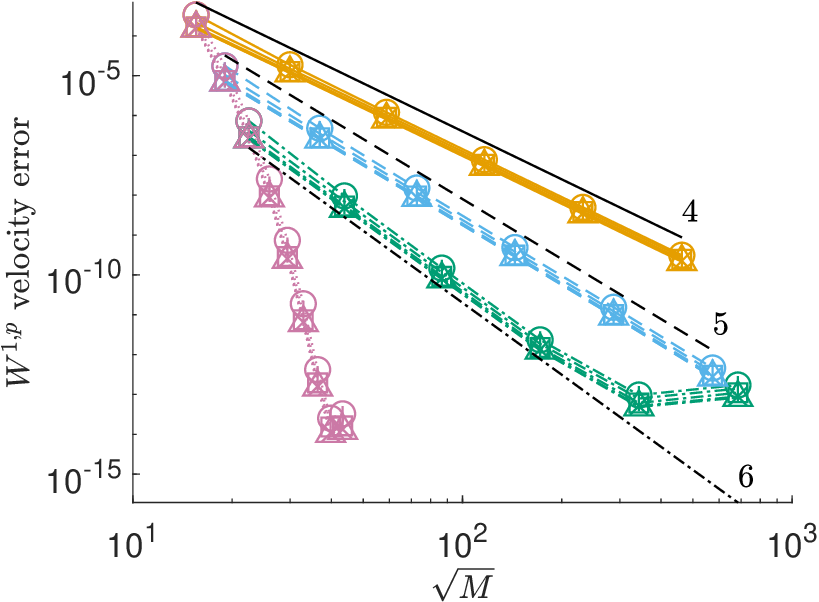}
		\caption{$\|\bdd{u} - \bdd{u}_N\|_{1, p, \Omega}$}
	\end{subfigure}
	\hfill
	\begin{subfigure}[t]{0.48\linewidth}
		\centering
		\includegraphics[width=\linewidth]%
		{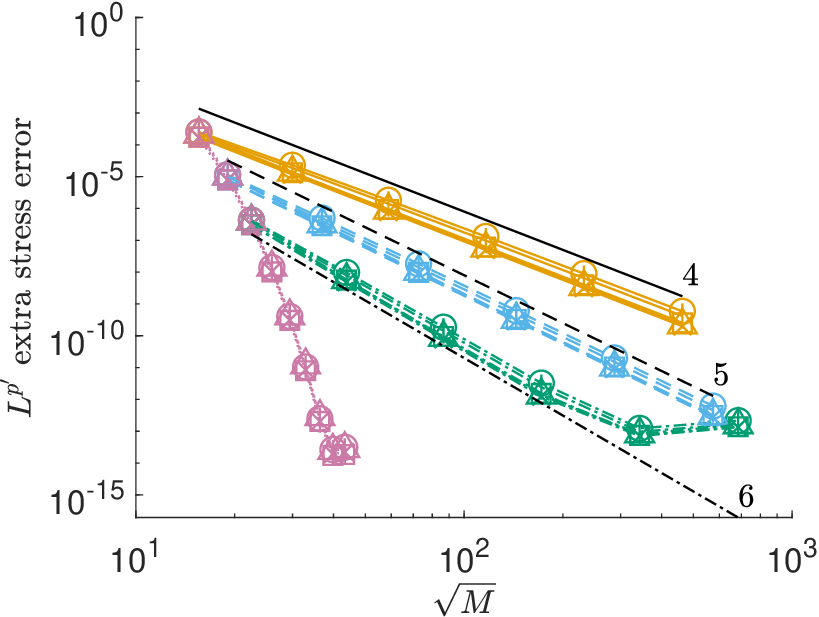}
		\caption{$\| \tensorbd{S}(\symgrad{\bdd{u}}) - 
			\tensorbd{S}(\symgrad{\bdd{u}_N}) \|_{p', \Omega}$}
	\end{subfigure}
	\\
	\begin{subfigure}[t]{0.48\linewidth}
		\centering
		\includegraphics[width=\linewidth]%
		{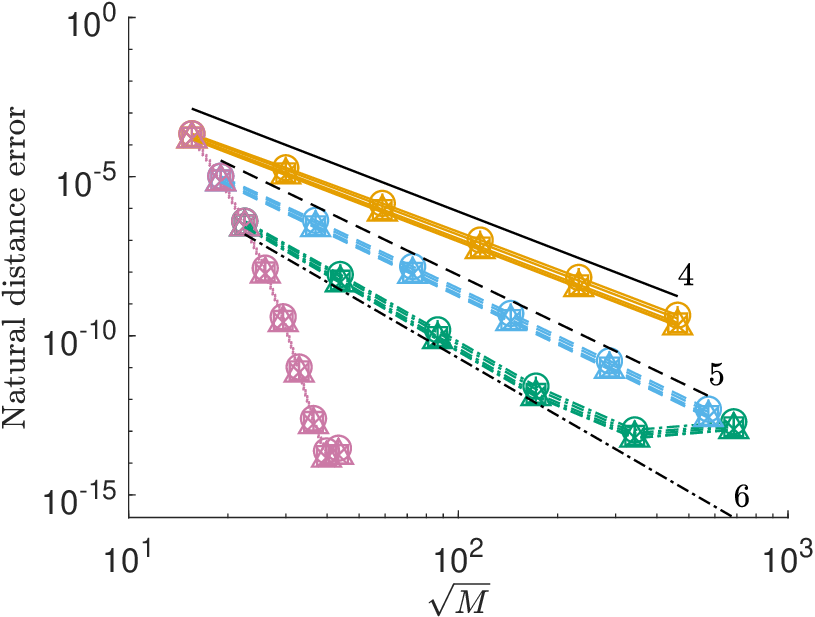}
		\caption{$\|\tensorbd{F}(\symgrad{\bdd{u}}) - 
			\tensorbd{F}( \symgrad{\bdd{u}_N} )\|_{2, \Omega}$}
	\end{subfigure}
	\hfill
	\begin{subfigure}[t]{0.48\linewidth}
		\centering
		\includegraphics[width=\linewidth]%
		{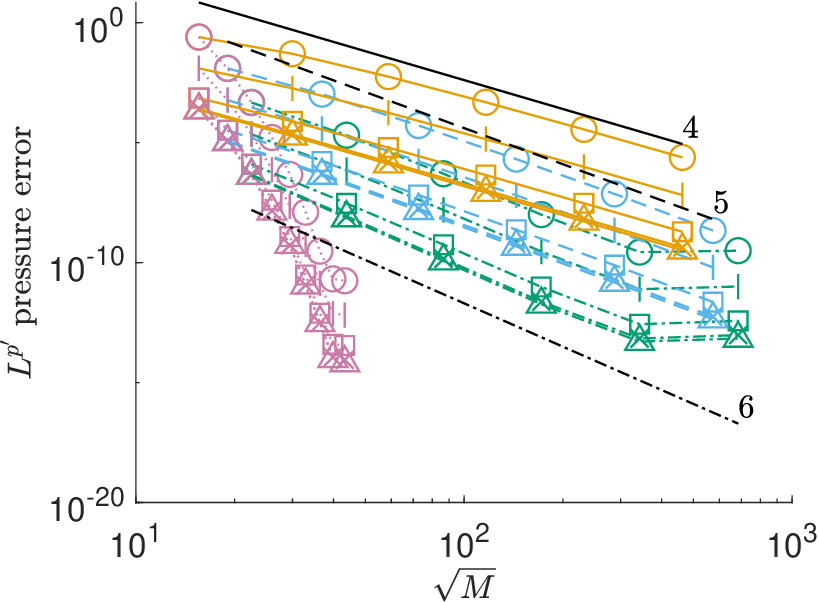}
		\caption{ $\|q - q_{N-1}\|_{p', \Omega}$}
	\end{subfigure}
	\caption{Relative errors for example \cref{eq:smooth-exact} against 
		the total number of degrees of freedom $M = \dim \bdd{V}^N + \dim 
		Q^{N-1}$; 
		see \cref{tab:plot-legends} for the legend for the lines with markers. 
		The text demarcates the slope
		of the marker-less black reference lines, 
		indicating that each of the errors of the 
		$h$-version methods decays like $\mathcal{O}(h^N)$. Note that the 
		error plateau for $N=6$ is well below the nonlinear solver tolerance.}
	\label{fig:smooth-errors}
\end{figure}

\begin{table}[htb]
	\centering
	\begin{subtable}[t]{0.48\linewidth}
		\centering
		\begin{tabular}{c|c}
			Method & Line style \\
			\hline
			$h$-version, $N=4$ & \textcolor{fourth}{solid} \\
			$h$-version, $N=5$ & \textcolor{fifth}{dashed} \\
			$h$-version, $N=6$ & \textcolor{sixth}{dash-dotted} \\
			$p$-version, $\mathcal{T}_0$ & \textcolor{pver}{dotted}
		\end{tabular}
		\caption{}
	\end{subtable}
	\hfill 	
	\begin{subtable}[t]{0.48\linewidth}
		\centering
		\begin{tabular}{c|c}
			$p$ & Marker \\
			\hline
			1.333 & $\triangle$ \\
			1.5 & $\times$ \\
			2 & $\square$ \\
			3 & $|$ \\
			4 & $\circ$ 
		\end{tabular}
		\caption{}	
	\end{subtable}
	\caption{Legend for the plots in \cref{fig:smooth-errors}.}
	\label{tab:plot-legends}
\end{table}

\subsubsection{Solution with limited regularity}

We now consider the example from \cite[eq. (6.2)]{Diening25} (see also 
\cite[eq. (7.1)]{Belenki12}) with limited regularity:
\begin{align}
	\label{eq:nonsmooth-exact-sol}
	\bdd{u} = |\bdd{x}|^{0.01} \begin{bmatrix}
		x_2 \\ -x_1
	\end{bmatrix} \quad \text{and} \quad q = 
	|\bdd{x}|^{\frac{2}{p} - 1 + 0.01} \cdot \begin{cases}
		1 & \text{if } p \geq 2, \\
		0.01 & \text{otherwise}.
	\end{cases} 
\end{align}
In particular, the exponents are chosen so that
\begin{alignat*}{2}
	\bdd{u} &\in \bdd{W}^{1.01 + \frac{2}{p}-\epsilon,p}(\Omega), \qquad &  
	\tensorbd{F}(\symgrad{\bdd{u}}) &\in 
	\tensorbd{W}^{1 + \frac{(0.01)p}{2}-\epsilon,2}(\Omega), \\
	\tensorbd{S}(\symgrad{\bdd{u}}) &\in \tensorbd{W}^{ \frac{2}{p'} + 
		\frac{0.01}{p'-1} -\epsilon, p'}(\Omega), \qquad & q &\in 
	W^{1.01-\epsilon, p'}(\Omega),
\end{alignat*}
for any $\epsilon > 0$. As we consider $p$ between 1.333 and 4, we expect the 
convergence rates for the natural distance and the pressure to behave as if 
$\bdd{u} \in \bdd{W}^{1+\frac{2}{p}, p}(\Omega)$, 
$\tensorbd{S}(\symgrad{\bdd{u}})\in \tensorbd{W}^{ \frac{2}{p'}, p'}$,
$\tensorbd{F}(\symgrad{\bdd{u}}) \in \tensorbd{W}^{1,2}(\Omega)$, and $q \in 
W^{1,p'}(\Omega)$. Although the estimates in 
\cref{lem:p-stokes-quasi-opt} are not directly applicable to the case of 
nonzero boundary conditions, we 
will see that these estimates appear to accurately describe the behavior of 
the method in some cases. 
In \cref{tab:rough-convergence-rates}, we display the convergence 
rates of the four error metrics for two 
methods: the $h$-version method with $N=4$ (the first column) and the 
$p$-version method on the initial mesh with $N \in \{4,\ldots, 16\}$ (the 
second column). Here, the convergence rate is fitted to be the constant 
$\gamma$ such that the error measure decays like $\mathcal{O}(M^{-\gamma/2})$,
where $M = \dim \bdd{V}^N + \dim Q^{N-1}$ is the total number of degrees of 
freedom. We note that the $h$-version method with $N \in \{5,6\}$ produced the 
same convergence rates as for $N=4$, as one would expect for a solution with
limited regularity.

\begin{table}[htb]
	\centering
	\resizebox{\linewidth}{!}{%
		\begin{tabular}{c|cc|cc|cc|cc}
			$p$ & \multicolumn{2}{|c|}{$\|\bdd{u} - \bdd{u}_N\|_{1, p, 
					\Omega}$} 
			& \multicolumn{2}{|c|}{$\| \tensorbd{S}(\symgrad{\bdd{u}}) - 
				\tensorbd{S}(\symgrad{\bdd{u}_N}) \|_{p', \Omega}$} 
			& \multicolumn{2}{|c|}{$\|\tensorbd{F}(\symgrad{\bdd{u}}) - 
				\tensorbd{F}( \symgrad{\bdd{u}_N} )\|_{2, \Omega}$}
			& \multicolumn{2}{|c}{$\|q - q_{N-1}\|_{p', \Omega}$} \\
			\hline
			$1.333$ & $1.54$ & $2.49$ & $0.51$ & $1.02$ & $1.03$ & $1.98$ & 
			$0.51$ & $1.16$  \\ 
			$1.4$ & $1.47$ & $2.43$ & $0.59$ & $1.18$ & $1.03$ & $1.98$ & 
			$0.59$ & $1.30$  \\ 
			$1.5$ & $1.37$ & $2.34$ & $0.68$ & $1.39$ & $1.03$ & $1.98$ & 
			$0.68$ & $1.48$ \\ 
			$1.666$ & $1.24$ & $2.21$ & $0.82$ & $1.65$ & $1.03$ & $1.98$ & 
			$0.82$ & $1.71$  \\ 
			$2$ & $1.03$ & $1.98$ & $1.03$ & $1.93$ & $1.03$ & $1.93$ & 
			$1.03$ & $1.94$  \\ 
			$2.5$ & $0.83$ & $1.64$ & $1.24$ & $2.23$ & $1.03$ & $1.99$ & 
			$1.03$ & $1.89$  \\ 
			$3$ & $0.69$ & $1.40$ & $1.38$ & $2.36$ & $1.04$ & $1.99$ & 
			$1.03$ & $1.72$  \\ 
			$3.5$ & $0.59$ & $1.25$ & $1.48$ & $2.43$ & $1.04$ & $1.99$ & 
			$1.03$ & $1.57$ \\ 
			$4$ & $0.52$ & $1.15$ & $1.56$ & $2.48$ & $1.04$ & $1.99$ & 
			$1.03$ & $1.45$  \\ 
		\end{tabular}
	}
	\caption{Algebraic convergence rates for the 
		nonsmooth solution \cref{eq:nonsmooth-exact-sol}. 
		For each error metric, the first 
		column is the rate of convergence of the 
		$h$-version with $N=4$, while the second column is the rate of 
		convergence of the $p$-version method on one refinement of 
		$\mathcal{T}_0$ 
		with $N \in \{4,\ldots,16\}$.}
	\label{tab:rough-convergence-rates}
\end{table}

\textbf{$h$-version. } We observe the convergence rates predicted by 
\cref{cor:p-stokes-linear-convergence}: For the natural distance error, 
$\|\tensorbd{F}(\symgrad{\bdd{u}}) - 
\tensorbd{F}( \symgrad{\bdd{u}_N} )\|_2 = \mathcal{O}(h)$, while for the 
pressure error, $\|q - q_{N-1}\|_{p'} = \mathcal{O}(h^{\min\{ 1, 2/p' \}})$. 
For the velocity $W^{1,p}$ error and the stress error, we appear to 
observe the best approximation rates
\begin{align*}
	\inf_{\bdd{v} \in \bdd{V}^N} \|\bdd{u} - \bdd{v}\|_{1,p,\Omega} 
	&\leq C h^{\frac{2}{p}} \|\bdd{u}\|_{p, 1+\frac{2}{p}, \Omega}, \\
	\inf_{\tensorbd{T} \in \tensorbd{DG}^{N-1}} 
	\|\tensorbd{S}(\symgrad{\bdd{u}}) - \tensorbd{T}\|_{p', \Omega} &\leq C 
	h^{\frac{2}{p'}} \|\tensorbd{S}(\symgrad{\bdd{u}}) \|_{p', \frac{2}{p'}, 
		\Omega}.
\end{align*}
Consequently, inequality \cref{eq:p-stokes-u-w1p} does not capture the 
behavior for any $p \neq 2$, while 
\cref{eq:proof:p-stokes-extra-stress-by-natural-dist} only captures the 
behavior of the stress error for $p < 2$.

\textbf{$p$-version. } For the four error metrics and values of $p$ 
considered, the $p$-version method converges at a faster rate than the 
$h$-version method. For the natural distance error
$\|\tensorbd{F}(\symgrad{\bdd{u}}) - 
\tensorbd{F}( \symgrad{\bdd{u}_N} )\|_2$, we observe quadratic convergence for 
all values of $p$ considered, which is double the rate of the $h$-version 
method. We also observe a doubling in the rate of convergence in the
stress error $\| \tensorbd{S}(\symgrad{\bdd{u}}) - 
\tensorbd{S}(\symgrad{\bdd{u}_N}) \|_{p', \Omega}$ and the pressure error $\|q 
- q_{N-1}\|_{p'}$ for $p < 2$ and in the velocity error $\|\bdd{u}_{p} 
- \bdd{u}_{p, N}\|_{1,p,\Omega}$ for $p > 2$. The literature for $p$-version 
approximation theory for functions of the form \cref{eq:nonsmooth-exact-sol} 
is not complete. One of the only available results is \cite[Theorem 
12]{Ain99} (see \cite{AinKay00} for $hp$-approximation theory): for $p > 2$ 
and functions of the form $u = |\bdd{x}|^{\lambda} 
\Theta(\bdd{x})$, where $\Theta$ is smooth, $\lambda > 1-2/p$, there holds
\begin{align*}
	\inf_{v \in V^N_{\Gamma}} \|u - v\|_{1, p} \leq C(\epsilon, u) 
	p^{-2\left(\lambda-1+\frac{2}{p}\right)+\epsilon},
\end{align*}
where $V^{N}_{\Gamma}$ is the space of continuous piecewise polynomials of 
degree $N$ vanishing on $\Gamma_0$. Consequently, the convergence rates we 
observe for $\|\bdd{u}_{p} - \bdd{u}_{p, N}\|_{1,p,\Omega}$ when $p > 2$ 
correspond to the optimal approximation rates. There is currently no theory to 
confirm whether or not the remaining convergence rates we observe are optimal.

\begin{remark}
	For $p < 2$, the pressure error and the stress error have about the 
	same convergence rate for the $p$-version. If the error estimate 
	\cref{eq:p-stokes-q-quasi-opt-s} and inf-sup estimate 
	\cref{eq:inf-sup-stability} were sharp, we would expect that 
	the pressure convergence rate is $3\left| \frac{1}{2} - 
	\frac{1}{p}\right|$ less than the stress convergence rate. Since 
	this is not what we observe, our estimate for the inf-sup constant 
	$\beta_{N, p, \Gamma}$ may be suboptimal for $p \in [1.333, 2)$.
\end{remark}

\section{Inverting the divergence operator}	
\label{sec:invert-div}

In this section, we will construct a right-inverse of $\dive : 
\bdd{V}_{\Gamma}^{N} \to Q_{\Gamma}^{N-1}$ whose stability is uniform in the
polynomial degree (\cref{thm:invert-div}). The construction will proceed in 
several steps in the proceeding sections. We first recall the following 
inverse inequality from \cite{Daugavet72} (see also \cite[eq. 
(5.3)]{Ganzburg01}).
\begin{lemma}
	Let $U \subset \mathbb{R}^d$ be open and bounded with Lipschitz boundary 
	and $N \in \mathbb{N}$. For all $1 \leq r \leq p \leq \infty$, there 
	holds
	\begin{align}
		\label{eq:lp-inverse-tri}
		\| q \|_{p, U} \leq C(U, r, p) N^{2d\left( \frac{1}{r} - 
			\frac{1}{p} 
			\right)} \|q\|_{r, U} \qquad \forall q \in \mathcal{P}_{N}(U).
	\end{align}
\end{lemma}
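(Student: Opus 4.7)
The plan is to prove the inequality first on a reference cube by iterating the classical one-dimensional Nikolskii inequality, and then to transfer the estimate to the Lipschitz domain $U$ by a finite affine covering.

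As an endpoint case on an interval, I would start from the one-dimensional result: for $q \in \mathcal{P}_N([-1,1])$, the Markov inequality $\|q'\|_{\infty,[-1,1]} \leq N^2 \|q\|_{\infty,[-1,1]}$ implies, via a Remez/level-set argument, that $|q|$ exceeds half of its maximum on a subinterval of length at least $c N^{-2}$, giving $\|q\|_{\infty,[-1,1]} \leq C N^{2}\|q\|_{1,[-1,1]}$. The general one-dimensional Nikolskii estimate $\|q\|_{p,[-1,1]} \leq C(r,p) N^{2(1/r - 1/p)}\|q\|_{r,[-1,1]}$ for $1 \leq r \leq p \leq \infty$ then follows by Hölder interpolation between this endpoint and the trivial bound $\|q\|_{p,[-1,1]} \leq 2^{1/p-1/r}\|q\|_{p,[-1,1]}$.

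Next, on the reference cube $K = [-1,1]^d$, I would use the inclusion $\mathcal{P}_N(K) \subset \mathcal{Q}_N(K)$, where $\mathcal{Q}_N$ denotes polynomials of coordinate degree $\leq N$. Applying the one-dimensional inequality along the $i$-th coordinate and integrating the remaining coordinates via Fubini produces a factor $N^{2(1/r-1/p)}$; iterating over $i = 1, \ldots, d$ yields $\|q\|_{p,K} \leq C(r,p) N^{2d(1/r - 1/p)}\|q\|_{r,K}$.

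To descend to a general Lipschitz domain $U$, I would construct a finite family $\{K_j\}_{j=1}^M$ of affine images of $K$, each contained in $U$, that cover $U$ up to a null set and have affine distortion bounded in terms of $U$ alone. Interior points are covered by small axis-aligned cubes inscribed in balls $B(x,r) \subset U$, while near $\partial U$ one exploits the uniform cone condition afforded by the Lipschitz assumption to inscribe rotated cubes inside the cones $C_x \subset U$; a compactness argument then extracts a finite subcover with controlled overlap multiplicity. Applying the cube estimate on each $K_j$ via affine change of variables (noting that the factor $N^{2d(1/r-1/p)}$ is scale-invariant), summing over $j$, and using the discrete embedding $\ell^r \hookrightarrow \ell^p$ for $r \leq p$ to recombine the local $L^r$ norms into $\|q\|_{r,U}$ completes the argument.

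The principal obstacle is the last step: constructing a finite covering of $U$ by affinely equivalent cubes with a uniform aspect ratio. For smooth $\partial U$ this is routine, but under only a Lipschitz assumption one must invoke the cone condition to ensure that the near-boundary cubes have a minimum size depending only on the Lipschitz constants of $U$; once the covering is in place, the transfer from the reference estimate proceeds by an elementary affine scaling.
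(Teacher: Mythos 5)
First, a point of reference: the paper does not prove this lemma at all --- it is quoted from Daugavet and from Ganzburg's survey on polynomial inequalities on measurable sets --- so your proposal is being measured against the literature rather than against an in-paper argument. Your one-dimensional endpoint estimate, the H\"older interpolation down from $p=\infty$, and the tensorization on a cube are all sound in spirit (though the iteration via Fubini for general $p$ produces mixed norms $L^p_{x'}L^r_{x_1}$ rather than $L^r$; the clean route is to prove only the $r\to\infty$ endpoint by iterating the sup over one variable at a time and then interpolate).

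The genuine gap is exactly the step you flag as ``the principal obstacle'': for a general Lipschitz domain there is \emph{no} finite family of uniformly non-degenerate parallelepipeds contained in $\bar{U}$ that covers $U$ up to a null set. Each such parallelepiped is convex, and a convex set lying above a Lipschitz graph can hug that graph to within height $\eta$ only over a horizontal extent comparable to the scale at which the graph oscillates; if $\partial U$ oscillates at all scales (e.g.\ a self-similar sawtooth graph, which is perfectly Lipschitz), then for every $\eta>0$ the strip $\{0<y_d-\phi(y')<\eta\}$ has positive measure but requires a number of convex covering sets that blows up as $\eta\to 0$. The cone condition does not rescue this: the cube you inscribe in the cone $C_x$ sits at positive distance from the vertex $x$, so the near-boundary strip is never covered. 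The standard repair is to avoid covering altogether: reduce to $p=\infty$ via $\|q\|_{p}^{p}\le\|q\|_{\infty}^{p-r}\|q\|_{r}^{r}$, let $x^{*}$ be a point of $\bar U$ where $|q|$ attains its maximum, and use the uniform interior cone condition to place a \emph{single} simplex $S\subset\bar U$ of fixed shape and size with one vertex at $x^{*}$; the sharp simplex inequality $\|q\|_{\infty,S}\le C N^{2d/r}\|q\|_{r,S}$ then gives $\|q\|_{\infty,U}=|q(x^{*})|\le C N^{2d/r}\|q\|_{r,S}\le C N^{2d/r}\|q\|_{r,U}$. With that substitution the rest of your outline goes through.
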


The construction of the right-inverse of the divergence operator in 
\cref{thm:invert-div} follows most of the construction given in 
\cite{AinCP19StokesI} for the case $p=2$. However, at each step, the 
construction requires modification for the case $p \neq 2$.

\subsection{Single element}

We now prove \cref{thm:invert-div} on a single reference element and then a 
physical element $K \in \mathcal{T}$, both with homogeneous Dirichlet 
conditions. Let 
$\reftri \subset \mathbb{R}^2$ be the reference element depicted in 
\cref{fig:reference triangle}. Given $K \in \mathcal{T} \cup \{ \reftri \}$, 
let $\mathcal{V}_K$ and $\mathcal{E}_K$ denote the vertices and edges of $K$, 
and for $N \in \mathbb{N}_0$, define
\begin{align*}
	\bdd{V}^N(K) &:= \mathcal{P}_N(K)^2, \\
	\bdd{V}^N_0(K) &:= \mathcal{P}_N(K)^2 \cap \bdd{W}^{1,1}_0(K), \\
	Q^N_0(K) &:= \{ q \in \mathcal{P}_N(K) \cap L^1_0(K) : q(\vertex{a}) = 0 \ 
	\forall \vertex{a} \in \mathcal{V}_{K} \}.
\end{align*}
Additionally, for $K \in \mathcal{T}$, let $\bdd{F}_K : \reftri \to K$ denote 
a bijective affine mapping.

We begin with an inverse of the trace operator on the reference element that 
provides a divergence-free extension of suitable boundary data. 
\begin{lemma}
	There exists a linear operator
	\begin{align*}
		\hat{\mathcal{L}}_{\partial} : \bigcup_{1 < p < \infty} \{ \bdd{w} \in 
		\bdd{W}^{1,p}(\reftri) : \dive \bdd{w} \in L^p_0(\reftri) \} \to 
		\bdd{W}^{1,1}(\reftri)	
	\end{align*}
	satisfying the following: For all $p \in (1,\infty)$ and $\bdd{v} \in \{ 
	\bdd{w} \in \bdd{W}^{1,p}(\reftri) : \dive \bdd{w} \in L^p_0(\reftri) \}$, 
	there holds $\hat{\mathcal{L}}_{\partial} \bdd{v} \in \bdd{W}^{1, 
		p}(\reftri)$ with 
	\begin{align}
		\label{eq:div-free-extension-cont}
		\hat{\mathcal{L}}_{\partial} \bdd{v} = \bdd{v} \quad \text{on } 
		\partial \reftri, \quad \dive  \hat{\mathcal{L}}_{\partial} \bdd{v} 
		\equiv 0, \quad \text{and} \quad \| \hat{\mathcal{L}}_{\partial} 
		\bdd{v} \|_{1, p, \reftri} \leq C(p) \|\bdd{v}\|_{1-\frac{1}{p}, p, 
			\partial \reftri}.
	\end{align}
	Moreover, for all $N \in \mathbb{N}$, there holds
	\begin{align}
		\label{eq:div-free-extension-poly}
		\hat{\mathcal{L}}_{\partial} : \{ \bdd{w} \in \bdd{V}^N(\reftri) : 
		\dive \bdd{w} \in Q_0^{N-1}(\reftri) \} \to \bdd{V}^{N}(\reftri).
	\end{align}
\end{lemma}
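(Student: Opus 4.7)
The plan is to build $\hat{\mathcal{L}}_{\partial}$ via a scalar stream function, so that the divergence-free property is automatic. Given $\bdd{v}$ in the domain, the hypothesis $\dive \bdd{v} \in L^p_0(\reftri)$ together with the divergence theorem yields $\int_{\partial \reftri} \bdd{v} \cdot \bdd{n} \,\mathrm{d}s = \int_{\reftri} \dive \bdd{v} \,\mathrm{d}\bdd{x} = 0$. Fix a vertex $\bdd{x}_0 \in \mathcal{V}_{\reftri}$ and traverse $\partial \reftri$ counter-clockwise to define
\[
\phi(\bdd{x}) := \int_{\bdd{x}_0}^{\bdd{x}} (\bdd{v} \cdot \bdd{n})(\bdd{y}) \,\mathrm{d}s(\bdd{y}),
\]
which is continuous and single-valued on $\partial \reftri$. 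Since $\partial_t \phi = \bdd{v} \cdot \bdd{n}$ on each edge, one has $\phi \in W^{2-1/p,p}(\partial \reftri)$ with norm controlled by $\|\bdd{v}\|_{1-1/p,p,\partial \reftri}$.

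Next, I would invoke a $p$-independent, polynomial-preserving Hermite-type trace lifting on $\reftri$ delivering a scalar $\tilde \phi$ satisfying
\[
\tilde \phi|_{\partial \reftri} = \phi, \qquad \partial_n \tilde \phi|_{\partial \reftri} = -\bdd{v} \cdot \bdd{t},
\]
with the continuity estimate
\[
\|\tilde \phi\|_{2,p,\reftri} \leq C(p)\bigl( \|\phi\|_{2-1/p,p,\partial \reftri} + \|\bdd{v} \cdot \bdd{t}\|_{1-1/p,p,\partial \reftri} \bigr) \leq C(p) \|\bdd{v}\|_{1-1/p,p,\partial \reftri}.
\]
Such $W^{2,p}$ Hermite liftings on a triangle are standard tools in the $hp$-FEM literature (following, e.g., Mu\~{n}oz-Sol\`{a} or Ainsworth--Demkowicz); moreover, they can be arranged to send corner-compatible polynomial boundary and Neumann data of degree $N+1$ into $\mathcal{P}_{N+1}(\reftri)$.

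Defining $\hat{\mathcal{L}}_{\partial} \bdd{v} := \vcurl \tilde \phi = (\partial_2 \tilde \phi, -\partial_1 \tilde \phi)$, the identity $\dive \vcurl \tilde \phi \equiv 0$ is automatic. A direct computation with $\bdd{t} = (-n_2, n_1)$ gives $\vcurl \tilde \phi \cdot \bdd{n} = \partial_t \tilde \phi = \bdd{v} \cdot \bdd{n}$ and $\vcurl \tilde \phi \cdot \bdd{t} = -\partial_n \tilde \phi = \bdd{v} \cdot \bdd{t}$ on $\partial \reftri$, so the orthogonal decomposition yields $\vcurl \tilde \phi = \bdd{v}$ on $\partial \reftri$. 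The continuity estimate in \cref{eq:div-free-extension-cont} then follows from the $W^{2,p}$ bound on $\tilde \phi$, and the polynomial preservation \cref{eq:div-free-extension-poly} is immediate since $\vcurl \mathcal{P}_{N+1}(\reftri) \subset \bdd{V}^N(\reftri)$.

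The main obstacle is identifying and verifying the corner compatibility conditions required for the polynomial-preserving Hermite lifting. At each vertex $\bdd{a}$ of $\reftri$, the first-order data from the two incident edges each determine $\nabla \tilde \phi(\bdd{a})$ as the $-\pi/2$ rotation of $\bdd{v}(\bdd{a})$, so these compatibilities are automatic from the continuity of $\bdd{v}$. At second order, each edge supplies two components of the Hessian while the Hessian itself has only three independent entries, leaving one compatibility per vertex; a short direct computation in a convenient local basis shows that this scalar relation reduces to $\dive \bdd{v}(\bdd{a}) = 0$. Together with the single compatibility $\int_{\partial \reftri} \bdd{v} \cdot \bdd{n} = 0$, this recovers exactly the four constraints encoded in $\dive \bdd{v} \in Q_0^{N-1}(\reftri)$, explaining why the polynomial preservation in \cref{eq:div-free-extension-poly} is stated on precisely this subspace; in the continuous case, the $W^{2-1/p,p}$-regularity of $\phi$ handles the corners automatically.
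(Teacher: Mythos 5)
Your proposal is correct and follows essentially the same route as the paper: both define the stream-function boundary data $f(\bdd{x})=\int_{\vertex{a}_1}^{\bdd{x}}\bdd{v}\cdot\unitvec{n}\,\d{s}$ and $g=-\bdd{v}\cdot\unitvec{t}$, apply a polynomial-preserving $W^{2,p}$ Hermite trace lifting on the triangle (the paper uses its companion result \cite[Theorem 3.2]{Parker23} and the compatibility verification of \cite[Lemma 3.6]{AinCP19StokesI} where you invoke the generic $hp$-lifting literature and check the corner conditions by hand), and set $\hat{\mathcal{L}}_{\partial}\bdd{v}:=\vcurl\psi$. The only cosmetic difference is that the paper phrases the continuity of the lifting in terms of $\|f\|_{p,\partial\reftri}+\|\partial_t f\,\unitvec{t}+g\,\unitvec{n}\|_{1-1/p,p,\partial\reftri}$ (noting $\partial_t f\,\unitvec{t}+g\,\unitvec{n}=\bdd{v}^{\perp}$) rather than a $W^{2-1/p,p}(\partial\reftri)$ norm of $\phi$, which sidesteps any discussion of fractional regularity of $\phi$ across the corners.
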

\begin{proof}
	Let $p \in (1,\infty)$ and $\bdd{v} \in \{ \bdd{w} \in 
	\bdd{W}^{1,p}(\reftri) : \dive \bdd{w} \in L^p_0(\reftri) \}$.
	Define $f, g : \partial \reftri \to \mathbb{R}$ by the rules
	\begin{align}
		\label{eq:proof:div-free-f-g}
		f(\bdd{x}) = \int_{\vertex{a}_1}^{\bdd{x}} \bdd{v} \cdot \unitvec{n} 
		\d{s} \quad \text{and} \quad g(\bdd{x}) = -(\bdd{v} \cdot 
		\unitvec{t})(\bdd{x}) \qquad \forall \bdd{x} \in \partial \reftri,
	\end{align}
	where the path integral is oriented in the counter-clockwise direction. 
	Then, these functions satisfy the following conditions:
	\begin{enumerate}
		\item[1.] $f$ is continuous on $\partial \reftri$ since $(\bdd{v} 
		\cdot \unitvec{n}, 1)_{\partial \reftri} = (\dive \bdd{v}, 
		1)_{\reftri} = 0$;
		
		\item[2.] The quantity $\bdd{\sigma}(f, g) := \partial_t f \unitvec{t} 
		+ g \unitvec{n}$, where $\partial_t$ denotes the tangential derivative 
		along $\partial \reftri$, satisfies
		\begin{align*}
			\bdd{\sigma}(f, g) = (\bdd{v} \cdot \unitvec{n}) \unitvec{t} - 
			(\bdd{v} \cdot \unitvec{t}) \unitvec{n} = \bdd{v}^{\perp} \in 
			\bdd{W}^{1-\frac{1}{p}, p}(\partial \reftri).
		\end{align*}
		Recall that $\cdot^{\perp}$ denotes a counterclockwise 
		rotation by $\pi/2$.
	\end{enumerate}
	Thanks to \cite[Theorem 3.2]{Parker23}, there exists $\psi \in W^{2, 
		p}(\reftri)$ satisfying
	\begin{align*}
		\psi|_{\partial \reftri} = f, \quad \partial_n \psi|_{\partial 
			\reftri} = g, \quad \text{and} \quad \|\psi\|_{2, p, \reftri} \leq 
		C(p) \left( \|f\|_{p, \partial \reftri} + \| \bdd{\sigma}(f, g)  
		\|_{1-\frac{1}{p}, p, \partial \reftri} \right).
	\end{align*}
	Moreover, the mapping $(f, g) \mapsto \psi$ is linear and independent of 
	$p$. The function $\bdd{w} := \vcurl \psi$ then satisfies $\dive \bdd{w} = 
	0$,
	\begin{align*}
		\bdd{w} = \vcurl \psi = \partial_t f \unitvec{n} - g \unitvec{t} = 
		\bdd{v} \qquad \text{on } \partial \reftri,
	\end{align*}
	and
	\begin{align*}
		\|\bdd{w}\|_{1, p} &\leq C(p) \left( \|f\|_{p, \partial \reftri} + \| 
		\bdd{\sigma}(f, g) \|_{1-\frac{1}{p}, p, \partial \reftri} \right) \\
		&\leq C(p) \left( \| \bdd{v} \cdot \unitvec{n} \|_{p, \partial 
			\reftri} + \|\bdd{v}^{\perp} \|_{1-\frac{1}{p}, p, \partial \reftri} 
		\right) 
		\leq C(p) \|\bdd{v}\|_{1, p, \reftri},
	\end{align*}
	where we used H\"{o}lder's inequality and the trace theorem. Defining 
	$\hat{\mathcal{L}}_{\partial} \bdd{v} := \bdd{w}$ then satisfies 
	\cref{eq:div-free-extension-cont}.
	
	It remains to show that $\hat{\mathcal{L}}_{\partial}$ satisfies 
	\cref{eq:div-free-extension-poly}. Let $N \in \mathbb{N}$ and $\bdd{v} 
	\in \{ \bdd{w} \in \bdd{V}^N(\reftri) : \dive \bdd{w} \in 
	Q_0^{N-1}(\reftri) \}$. The proof of \cite[Lemma 3.6]{AinCP19StokesI} 
	shows that $f$ and $g$ given by \cref{eq:proof:div-free-f-g} satisfy the 
	following additional conditions:
	\begin{enumerate}
		
		\item[3.] $\bdd{\sigma}(f, g)$ is continuous;
		
		\item[4.] $\unitvec{t}_{i+1} \cdot \bdd{\sigma}(f, 
		g)|_{\hat{e}_i}(\vertex{a}_{i+2}) = \unitvec{t}_i \cdot 
		\bdd{\sigma}(f, g)|_{\hat{e}_{i+1}}(\vertex{a}_{i+2})$, $1 \leq i \leq 
		3$, where the indices are understood modulo 3;
		
		\item[5.] $f|_{\hat{e}_i} \in \mathcal{P}_{N+1}(\hat{e}_i)$ and 
		$g|_{\hat{e}_i} \in \mathcal{P}_{N}(\hat{e}_i)$, $1 \leq i \leq 3$.
	\end{enumerate}
	Then, \cite[Theorem 3.2]{Parker23} shows that the mapping $(f, g) \mapsto 
	\psi$ above also satisfies $\psi \in \mathcal{P}_{N+1}(\reftri)$, and so 
	$\bdd{w} = \vcurl \psi \in \bdd{V}^{N}(\reftri)$, which completes the 
	proof.
\end{proof}

The next step completes the construction an inverse for the divergence 
operator on the reference element.
\begin{lemma}
	\label{lem:invert-div-ref-element}
	There exists a linear operator
	\begin{align*}
		\hat{\mathcal{L}}_{\dive} : \bigcup_{1 < p < \infty} L^p_0(\reftri) 
		\to \bdd{W}^{1, 1}_0(\reftri)
	\end{align*}
	satisfying the following: For all $p \in (1,\infty)$, and $q \in 
	L^p_0(\reftri)$, there holds $\hat{\mathcal{L}}_{\dive} q \in \bdd{W}^{1, 
		p}_0(\reftri)$ with
	\begin{align}
		\label{eq:invert-div-ref-cont}
		\dive \hat{\mathcal{L}}_{\dive} q = q \quad \text{and} \quad \| 
		\hat{\mathcal{L}}_{\dive} q \|_{1, p, \reftri} \leq C(p) \|q\|_{p, 
			\reftri}.
	\end{align}
	Moreover, for all $N \in \mathbb{N}$, there holds
	\begin{align}
		\label{eq:invert-div-ref-poly}
		\hat{\mathcal{L}}_{\dive} : Q_0^{N-1}(\reftri) \to 
		\bdd{V}^N_0(\reftri),
	\end{align}
	and for every $\epsilon \in (0, 1)$, there holds
	\begin{align}
		\label{eq:invert-div-ref-cont-l1-linf}
		\| \hat{\mathcal{L}}_{\dive} q \|_{1, p, \reftri} \leq C(\epsilon) 
		N^{\epsilon} \|q\|_{p, \reftri}, \qquad p \in \{1, \infty\}, \ \forall 
		q \in Q_0^{N-1}(\reftri).
	\end{align}
\end{lemma}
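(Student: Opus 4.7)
The plan is to construct $\hat{\mathcal{L}}_{\dive}$ in two stages: first produce an auxiliary $W^{1,p}$-stable right-inverse $T$ of $\dive$ on $\reftri$ that is polynomial preserving but whose boundary trace is allowed to be non-zero, and then use the divergence-free lifting $\hat{\mathcal{L}}_\partial$ from the preceding lemma to subtract off the trace. Once such a $T$ is in hand, I would set
\begin{equation*}
\hat{\mathcal{L}}_{\dive} q := T q - \hat{\mathcal{L}}_\partial (T q).
\end{equation*}
The identities $\dive \hat{\mathcal{L}}_{\dive} q = q$ and $\hat{\mathcal{L}}_{\dive} q|_{\partial \reftri} = \bdd{0}$ are immediate from the properties of $\hat{\mathcal{L}}_\partial$, and the $W^{1,p}$-bound in \eqref{eq:invert-div-ref-cont} follows by combining the stability of $T$ with the bound \eqref{eq:div-free-extension-cont} on $\hat{\mathcal{L}}_\partial$ (the trace of $Tq$ is controlled by $\|Tq\|_{1,p,\reftri}$ via the trace theorem). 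Likewise, for $q \in Q^{N-1}_0(\reftri)$, the polynomial preservation of $T$ gives $Tq \in \bdd{V}^N(\reftri)$ with $\dive Tq = q \in Q^{N-1}_0(\reftri)$, so \eqref{eq:div-free-extension-poly} yields $\hat{\mathcal{L}}_\partial (T q) \in \bdd{V}^N(\reftri)$, hence $\hat{\mathcal{L}}_{\dive} q \in \bdd{V}^N_0(\reftri)$, which is \eqref{eq:invert-div-ref-poly}.

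The first step -- constructing a single linear $T$ with simultaneous $W^{1,p}$-stability (uniform in $p \in (1, \infty)$ via a $p$-independent map) and polynomial preservation -- is the crux. The classical Bogovskii operator on $\reftri$ (for any ball $B \subset \reftri$ and cut-off $\omega$) delivers $\dive T q = q$ with zero trace and uniform $W^{1,p}$-estimates, but need not map $Q^{N-1}_0(\reftri)$ into $\bdd{V}^N(\reftri)$. Reconciling these two properties requires either choosing a Bogovskii-type integral operator whose kernel preserves the polynomial structure on $\reftri$ (as in a Costabel--McIntosh-style construction), or constructing $T$ via a Helmholtz/stream-function argument on $\reftri$ combined with a dimension-counting verification of polynomial preservation. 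Either way, the operator is standard on a simplex but takes some care; this is the step I expect to be the main obstacle.

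Finally, for the $L^1$ and $L^\infty$ bounds \eqref{eq:invert-div-ref-cont-l1-linf}, I would exploit that $\hat{\mathcal{L}}_{\dive} q \in \bdd{V}^N(\reftri)$ so that the inverse inequality \eqref{eq:lp-inverse-tri} is available. Given $\epsilon \in (0, 1)$, choose $\delta > 0$ with $4\delta/(1+\delta) < \epsilon$. For $p = 1$, H\"{o}lder's inequality on the bounded domain $\reftri$ gives $\|\hat{\mathcal{L}}_{\dive} q\|_{1, 1, \reftri} \leq C \|\hat{\mathcal{L}}_{\dive} q\|_{1, 1+\delta, \reftri}$, so \eqref{eq:invert-div-ref-cont} followed by \eqref{eq:lp-inverse-tri} applied to the polynomial $q$ yields
\begin{equation*}
\|\hat{\mathcal{L}}_{\dive} q\|_{1, 1, \reftri} \leq C(\delta) \|q\|_{1+\delta, \reftri} \leq C(\delta) N^{\epsilon} \|q\|_{1, \reftri}.
\end{equation*}
The case $p = \infty$ is handled dually: apply \eqref{eq:invert-div-ref-cont} at a finite $p_1$ and then upgrade the resulting bound from $W^{1, p_1}$ to $W^{1, \infty}$ via \eqref{eq:lp-inverse-tri} applied to the polynomial $\hat{\mathcal{L}}_{\dive} q$, taking $p_1$ large enough that the induced power of $N$ is at most $\epsilon$.
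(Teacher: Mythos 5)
Your proposal is correct and follows essentially the same route as the paper: the paper takes $T = \mathcal{R}$ to be exactly the Costabel--McIntosh regularized Poincar\'{e} operator (which is $W^{1,p}$-bounded for all $p$, polynomial preserving, and maps $\mathcal{P}_{N-1}(\reftri)$ into $\mathcal{P}_N(\reftri)^2$), sets $\hat{\mathcal{L}}_{\dive} = (I - \hat{\mathcal{L}}_{\partial})\mathcal{R}$, and derives the $L^1$/$L^\infty$ bounds from the inverse inequality exactly as you describe. The step you flag as the crux is resolved precisely by the construction you name.
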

\begin{proof}
	Let $\theta \in C^{\infty}_c(\reftri)$ and define the operator 
	$\mathcal{R}$ by the rule
	\begin{align*}
		\mathcal{R}(q)(\bdd{x}) := -\int_{\reftri} \theta(\bdd{y}) (\bdd{x}- 
		\bdd{y}) \int_{0}^{1} t q(\bdd{y} + t(\bdd{x}-\bdd{y})) \d{t} 
		\d{\bdd{y}}, \qquad \bdd{x} \in \reftri.
	\end{align*}
	Then,  \cite[Remark 3.5]{Costabel10} shows that $\mathcal{R}$ maps 
	$L^p(\reftri)$ boundedly into $\bdd{W}^{1, p}(\reftri)$ for all $p \in 
	[1,\infty]$ with 
	\begin{align*}
		\dive \mathcal{R}(q) = q \quad \text{and} \quad \| \mathcal{R}(q) 
		\|_{1, p, \reftri} \leq C(p) \| q \|_{p, \reftri} \qquad \forall q \in 
		L^p(\reftri),
	\end{align*}
	and $\mathcal{R}$ maps $\mathcal{P}_{N-1}(\reftri) \to 
	\mathcal{P}_{N}(\reftri)^2$ for all $N \in \mathbb{N}$. 
	
	We now define $\hat{\mathcal{L}}_{\dive} := (I - 
	\hat{\mathcal{L}}_{\partial})\mathcal{R}$. Let $p \in (1,\infty)$ and $q 
	\in L^p_0(\reftri)$. First note that $\hat{\mathcal{L}}_{\dive}$ is 
	well-defined since $\dive \mathcal{R} = I$. Moreover, that 
	$\hat{\mathcal{L}}_{\dive} q|_{\partial \reftri} = \bdd{0}$ and the 
	properties 
	in \cref{eq:invert-div-ref-cont} follow from 
	\cref{eq:div-free-extension-cont}, and so $\hat{\mathcal{L}}_{\dive}$ maps 
	$L^p_0(\reftri)$ boundedly into $\bdd{W}^{1, p}_0(\reftri)$.
	
	Assume further that $N \in \mathbb{N}$ and $q \in Q_0^{N-1}(\reftri)$. 
	Then, $\mathcal{R}(q) \in \bdd{V}^N(\reftri)$ with $\dive \mathcal{R}(q)  
	= q \in Q_0^{N-1}(\reftri)$, and so \cref{eq:div-free-extension-poly} 
	shows that $\hat{\mathcal{L}}_{\dive} q \in \bdd{V}^N_0(\reftri)$. 
	Moreover, the inverse inequality \cref{eq:lp-inverse-tri} gives
	\begin{align*}
		\| \hat{\mathcal{L}}_{\dive} q  \|_{1, 1, \reftri} &\leq C	\| 
		\hat{\mathcal{L}}_{\dive} q  \|_{1, 1+\epsilon, \reftri} \leq 
		C(\epsilon)  \|q\|_{1+\epsilon, \reftri} \leq C(\epsilon) 
		N^{4\epsilon} \|q\|_{1, \reftri}, \\
		\| \hat{\mathcal{L}}_{\dive} q  \|_{1, \infty, \reftri} &\leq 
		C(\epsilon) N^{4\epsilon} \| \hat{\mathcal{L}}_{\dive} q  \|_{1, 
			\frac{1}{\epsilon}, \reftri} \leq C(\epsilon) N^{4\epsilon} 
		\|q\|_{\frac{1}{\epsilon}, \reftri} \leq C(\epsilon) N^{4\epsilon} 
		\|q\|_{\infty, \reftri}.
	\end{align*}
\end{proof}

Using a variant of the Piola transformation and standard scaling arguments, we 
obtain an inverse of the divergence operator on a physical element.
\begin{corollary}
	\label{cor:invert-div-element-phys}
	For each $K \in \mathcal{T}$, there exists a linear operator 
	\begin{align*}
		\mathcal{L}_{\dive, K} : \bigcup_{1 < p < \infty} L^p_0(K) \to 
		\bdd{W}^{1,1}_0(K)	
	\end{align*}
	satisfying the following for all $p \in (1,\infty)$, $N \in \mathbb{N}$, 
	and $q \in L_0^{p}(K)$:
	\begin{align}
		\label{eq:invert-div-physical-interp}
		\mathcal{L}_{\dive, K} q \in \bdd{V}_0^{N}(K) \text{ if } q \in 
		Q_0^{N-1}(K), \quad \mathcal{L}_{\dive, K} q \in \bdd{W}^{1,p}_0(K), 
		\text{  and  } \dive \mathcal{L}_{\dive, K} q  = q.
	\end{align}
	Moreover, for all $p \in [1,\infty]$ and $\epsilon \in (0, 1)$, there holds
	\begin{multline}
		\label{eq:invert-div-physical-cont}
		h_K^{-1} \| \mathcal{L}_{\dive, K} q \|_{p, K} + |\mathcal{L}_{\dive, 
			K} q|_{1, p, K} \\ \leq \begin{cases}
			C(p) \|q\|_{p, K} & \text{if } p \in (1,\infty), \\
			C(\epsilon) N^{\epsilon} \|q\|_{p, K} & \text{if } p \in \{1, 
			\infty\}, \ q \in Q_0^{N-1}(K),
		\end{cases} 
	\end{multline}
	where $C(p)$ and $C(\epsilon)$ also depend on shape regularity.

\end{corollary}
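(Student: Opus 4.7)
The plan is to pull back to the reference element via the contravariant Piola transform, apply \cref{lem:invert-div-ref-element}, and push the resulting vector field forward to $K$. Let $\bdd{F}_K(\hat{\bdd{x}}) = B_K \hat{\bdd{x}} + \bdd{b}_K$, and set $J_K := \det B_K$. Given $q \in L^p_0(K)$ with $p \in (1,\infty)$, define the pulled-back pressure
\begin{align*}
\hat{q}(\hat{\bdd{x}}) := J_K \, q(\bdd{F}_K(\hat{\bdd{x}})),
\end{align*}
which lies in $L^p_0(\reftri)$ and belongs to $Q_0^{N-1}(\reftri)$ whenever $q \in Q_0^{N-1}(K)$. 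Apply \cref{lem:invert-div-ref-element} to obtain $\hat{\bdd{v}} := \hat{\mathcal{L}}_{\dive} \hat{q} \in \bdd{W}^{1,p}_0(\reftri)$, and push forward using the Piola transform
\begin{align*}
(\mathcal{L}_{\dive,K} q)(\bdd{x}) := J_K^{-1} B_K \hat{\bdd{v}}(\bdd{F}_K^{-1}(\bdd{x})).
\end{align*}
This mapping is linear in $q$, preserves polynomial degrees (since $B_K \hat{\bdd{v}} \circ \bdd{F}_K^{-1} \in \mathcal{P}_N(K)^2$ when $\hat{\bdd{v}} \in \mathcal{P}_N(\reftri)^2$), preserves zero Dirichlet data on $\partial K$, and satisfies the divergence identity $\dive \mathcal{L}_{\dive,K} q = J_K^{-1} \hat{\dive}\, \hat{\bdd{v}} \circ \bdd{F}_K^{-1} = J_K^{-1} \hat{q} \circ \bdd{F}_K^{-1} = q$, which is the only property that makes the contravariant Piola transform the right choice here rather than the naive affine pullback.

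The continuity bounds then follow by standard scaling. Using shape regularity, $\|B_K\| \sim h_K$, $\|B_K^{-1}\| \sim h_K^{-1}$, and $|J_K| \sim h_K^2$. A direct change of variables gives
\begin{align*}
\|\hat{q}\|_{p,\reftri} \sim h_K^{2-2/p} \|q\|_{p,K}, \qquad \|\hat{\bdd{v}}\|_{p,\reftri} \sim h_K^{1-2/p} \|\mathcal{L}_{\dive,K} q\|_{p,K},
\end{align*}
and $|\hat{\bdd{v}}|_{1,p,\reftri} \sim h_K^{2-2/p} |\mathcal{L}_{\dive,K} q|_{1,p,K}$, where the power of $h_K$ collects contributions from $J_K$ (in the Jacobian of the change of variables) and from $B_K, B_K^{-1}$. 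Plugging these into $\|\hat{\bdd{v}}\|_{1,p,\reftri} \leq C(p) \|\hat{q}\|_{p,\reftri}$ from \cref{eq:invert-div-ref-cont}, the powers of $h_K$ cancel identically, yielding
\begin{align*}
h_K^{-1} \|\mathcal{L}_{\dive,K} q\|_{p,K} + |\mathcal{L}_{\dive,K} q|_{1,p,K} \leq C(p) \|q\|_{p,K}
\end{align*}
for $p \in (1,\infty)$. For the endpoint cases $p \in \{1,\infty\}$ under the extra assumption $q \in Q_0^{N-1}(K)$, one substitutes \cref{eq:invert-div-ref-cont-l1-linf} in place of \cref{eq:invert-div-ref-cont} and checks that the same cancellation produces the factor $C(\epsilon) N^{\epsilon}$ on the right-hand side.

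There is no substantive obstacle—everything reduces to the reference-element result once the Piola-type transform is chosen correctly. The only point that requires care is verifying that all the $h_K$-scalings cancel uniformly in $p \in [1,\infty]$; this is essentially a bookkeeping exercise that rests on the identity $\|B_K\|^p / |J_K|^{p-1} \sim h_K^{2-p}$ from 2D shape regularity, together with the analogous identity $\|B_K\|^p \|B_K^{-1}\|^p / |J_K|^{p-1} \sim h_K^{2-2p}$ for the gradient term. Once these are observed, the continuity estimates \cref{eq:invert-div-physical-cont} drop out, and the inclusions in \cref{eq:invert-div-physical-interp} follow from the corresponding reference-element inclusions \cref{eq:invert-div-ref-poly}.
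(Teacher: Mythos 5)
Your proposal is correct and follows essentially the same route as the paper: pull the pressure back to the reference element, apply Lemma \ref{lem:invert-div-ref-element}, push the resulting field forward with a Piola-type map, and cancel the $h_K$-powers by standard scaling. Since $\bdd{F}_K$ is affine with constant Jacobian and $\hat{\mathcal{L}}_{\dive}$ is linear, your contravariant Piola transform (with the $J_K^{-1}$ and $J_K$ factors) yields literally the same operator as the paper's variant $\nabla\bdd{F}_K\,\hat{\mathcal{L}}_{\dive}(q\circ\bdd{F}_K)\circ\bdd{F}_K^{-1}$, so the difference is purely cosmetic.
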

\begin{proof}
	Let $K \in \mathcal{T}$ and for $q \in L^p_0(K)$, define 
	$\mathcal{L}_{\dive, K} q := \nabla \bdd{F}_K \hat{\mathcal{L}}_{\dive}(q 
	\circ 
	\bdd{F}_K) \circ \bdd{F}_K^{-1}$,  where  $\nabla \bdd{F}_K$ is the 
	Jacobian of $\bdd{F}_K$. Note that $\mathcal{L}_{\dive, K}$ is 
	well-defined since $q 
	\circ \bdd{F}_K \in L^p_0(\reftri)$. Moreover, the chain rule and 
	\cref{lem:invert-div-ref-element} give
	\begin{align*}
		\dive \mathcal{L}_{\dive, K} q = (\dive \hat{\mathcal{L}}_{\dive}(q 
		\circ \bdd{F}_K)) \circ \bdd{F}_K^{-1} = q,
	\end{align*}
	while a standard scaling argument and \cref{eq:invert-div-ref-cont} give
	\begin{multline*}
		h_K^{-1} \| \mathcal{L}_{\dive, K} q \|_{p, K} + | 
		\mathcal{L}_{\dive, K} q |_{1, p, K} \leq C(p) h_K^{\frac{2}{p}} \| 
		\hat{\mathcal{L}}_{\dive}(q \circ \bdd{F}_K) \|_{1, p, \reftri} \\
		\leq C(p) h_K^{\frac{2}{p}} \|q \circ \bdd{F}_K \|_{p, \reftri} 
		\leq C(p) \|q\|_{p, K}.
	\end{multline*}
	The same scaling argument for $q \in Q_0^{N-1}(K)$ and $p \in 
	\{1,\infty\}$ combined with \cref{eq:invert-div-ref-cont-l1-linf} 
	completes the proof of \cref{eq:invert-div-physical-cont}. The remainder 
	of \cref{eq:invert-div-physical-interp} follows from 
	\cref{eq:invert-div-ref-cont,eq:invert-div-ref-poly}.
\end{proof}

\subsection{Multi-element mesh}	

We now extend \cref{cor:invert-div-element-phys} to a multi-element mesh, 
which requires some additional notation. Let $\mathcal{E}$ denote the sets of 
edges of the mesh, partitioned into interior edges $\mathcal{E}_I$ laying on 
the interior of $\Omega$ and boundary edges $\mathcal{E}_B$ laying on 
$\partial \Omega$.  

\subsubsection{Inverting element averages}

The first result is a partial inverse of the divergence operator that matches 
element averages and is a slight generalization of the stability of the 
$\bdd{V}^2 \times DG^0(\mathcal{T})$ element.
\begin{lemma}
	\label{lem:interp-avg-div}
	
	There exists a linear operator 
	\begin{align*}
		\mathcal{L}_{\mathrm{avg}}: \bigcup_{1 < p < \infty} 
		L^p_{\Gamma}(\Omega) \to \bdd{V}^2_{\Gamma}
	\end{align*}
	satisfying the following for all $p \in (1,\infty)$ and $q \in 
	L^p_{\Gamma}(\Omega)$:
	\begin{align}
		\label{eq:interp-avg-div}
		\int_{K} \dive \mathcal{L}_{\mathrm{avg}} q \d{\bdd{x}} = \int_{K} q 
		\d{\bdd{x}} \quad \forall K \in \mathcal{T} \quad \text{and} \quad 
		\|\mathcal{L}_{\mathrm{avg}} q \|_{1,p,\Omega} \leq C(p) 
		\|q\|_{p,\Omega},
	\end{align}
	where $C$ also depends on $\Omega$, $\Gamma_0$, and shape regularity.
\end{lemma}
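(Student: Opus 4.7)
Since only element averages of $\dive \bdd{v}$ need to be matched, the lemma is essentially the inf-sup stability in $W^{1,p}$ of the classical Bernardi--Raugel pair $\bdd{V}^2 \times DG^0(\mathcal{T})$ (with mixed boundary data), noting that Bernardi--Raugel is a subspace of $\bdd{V}^2$. I would prove it by composing a continuous right-inverse of the divergence with a Fortin operator of Bernardi--Raugel type; no element-level inversion as in \cref{cor:invert-div-element-phys} is required.

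\textbf{Step 1: continuous right-inverse.} Given $q \in L^p_{\Gamma}(\Omega)$, I would invoke a Bogovski\u{\i}-type operator to produce $\bdd{w} \in \bdd{W}^{1,p}_{\Gamma}(\Omega)$ satisfying $\dive \bdd{w} = q$ and $\|\bdd{w}\|_{1,p,\Omega} \leq C(p) \|q\|_{p,\Omega}$. When $|\Gamma_1| = 0$ this is the standard Bogovski\u{\i} construction on a bounded Lipschitz domain; the required compatibility condition $\int_{\Omega} q = 0$ is already built into $L^p_{\Gamma}(\Omega)$. The mixed-boundary case reduces to the pure-Dirichlet one by extending $q$ by zero across $\Gamma_1$ into a slightly enlarged Lipschitz domain and restricting the resulting field to $\Omega$.

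\textbf{Step 2: Bernardi--Raugel interpolant.} I would construct a quasi-interpolant $\Pi : \bdd{W}^{1,p}_{\Gamma}(\Omega) \to \bdd{V}^2_{\Gamma}$ as a Scott--Zhang-type nodal interpolation onto the continuous vector-valued $P_1$ subspace, corrected by edge-bubble contributions of the form $c_e(\bdd{w}) \phi_e \unitvec{n}_e$, where $\phi_e = \lambda_i \lambda_j$ is the standard $P_2$ edge bubble on an edge $e$ with endpoints $\vertex{a}_i, \vertex{a}_j$, and $c_e$ is chosen so that $\int_e \Pi \bdd{w} \cdot \unitvec{n}_e = \int_e \bdd{w} \cdot \unitvec{n}_e$ for every $e \in \mathcal{E}_I$ and every $e \in \mathcal{E}_B$ with $e \subset \Gamma_1$. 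Scott--Zhang dofs at vertices in $\mathcal{V}_{B,0}$ and edge-bubble coefficients on edges $e \subset \Gamma_0$ are set to zero, so that $\Pi \bdd{w} \in \bdd{V}^2_{\Gamma}$ whenever $\bdd{w}|_{\Gamma_0} = \bdd{0}$. Setting $\mathcal{L}_{\mathrm{avg}} q := \Pi \bdd{w}$, the divergence theorem plus the edge-flux preservation property yields
\[
\int_K \dive \Pi \bdd{w} \d{\bdd{x}} = \sum_{e \subset \partial K} \int_e \Pi \bdd{w} \cdot \unitvec{n}_K \d{s} = \sum_{e \subset \partial K} \int_e \bdd{w} \cdot \unitvec{n}_K \d{s} = \int_K q \d{\bdd{x}},
\]
which is the first half of \cref{eq:interp-avg-div}.

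\textbf{Stability and main obstacle.} The Scott--Zhang part is $W^{1,p}$-stable uniformly in $p$, and each edge-bubble correction is controlled via the $W^{1,p}$-continuity of the trace operator together with the normalization $\int_e \phi_e \d{s} = |e|/6$ and a standard inverse-scaling estimate for $\|\phi_e \unitvec{n}_e\|_{1,p,K}$; chaining with Step 1 then yields $\|\mathcal{L}_{\mathrm{avg}} q\|_{1,p,\Omega} \leq C(p) \|\bdd{w}\|_{1,p,\Omega} \leq C(p) \|q\|_{p,\Omega}$. The main technical point is verifying the $L^p \to W^{1,p}$ bound for the Bogovski\u{\i} operator under the mixed boundary data admitted here; the $p$-dependence of $C(p)$ originates from the Calder\'on--Zygmund-type bounds underlying that construction, but is by now standard and requires only a careful decomposition of $\Omega$ adapted to $\Gamma_0$ and $\Gamma_1$.
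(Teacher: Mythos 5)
Your overall architecture coincides with the paper's: a continuous right-inverse of the divergence with the correct boundary conditions (the paper's $\mathcal{L}_{\dive,\Gamma_0}$ from \cref{lem:invert-div-cont}), followed by a low-order quasi-interpolant into $\bdd{V}^2_{\Gamma}$ that preserves edge fluxes, so that the divergence theorem yields the element-average identity. The only cosmetic difference is in the correction term: the paper assigns the full vector-valued edge moments $\int_e \bdd{v}_{\mathcal{E}} = \int_e(\bdd{v}-\bdd{v}_1)$ to a $\bdd{V}^2$ field with zero vertex values, whereas you correct only the normal component with Bernardi--Raugel bubbles $c_e\phi_e\unitvec{n}_e$; both suffice, and both are estimated by the same trace/scaling arguments.

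There is, however, one step that fails as written: in the mixed-boundary case ($|\Gamma_1|>0$) you have $L^p_{\Gamma}(\Omega)=L^p(\Omega)$, so $\int_{\Omega} q$ need not vanish, and the zero extension of $q$ to an enlarged domain $\tilde\Omega$ then violates the compatibility condition $\int_{\tilde\Omega}\tilde q=0$ required by the Bogovski\u{\i} operator with homogeneous Dirichlet data on $\partial\tilde\Omega$. You must first carry the net flux out through $\Gamma_1$: the paper does this in \cref{lem:invert-div-cont} by constructing a fixed field $\bdd\psi$ with $\bdd\psi\cdot\unitvec{n}=\phi$ on a segment $\gamma\subset\Gamma_1$ and $\int_{\Omega}\dive\bdd\psi=1$, writing $\mathcal{L}_{\dive,\Gamma_0}q=\bar q\,\bdd\psi+\mathcal{L}_{\dive,\partial\Omega}(q-\bar q\,\dive\bdd\psi)$. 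Equivalently, in your extension argument you could extend $q$ by a suitable constant on $\tilde\Omega\setminus\Omega$ rather than by zero. With that repair your proof goes through and is essentially the paper's.
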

\begin{proof}		
	Let $p \in (1,\infty)$ and $q \in L^p_{\Gamma}(\Omega)$ be given. Let 
	$\bdd{v} := \mathcal{L}_{\dive, \Gamma_0} q \in W^{1,p}_{\Gamma}(\Omega)$, 
	where $\mathcal{L}_{\dive, \Gamma_0}$ is given by 
	\cref{lem:invert-div-cont}. Let $\bdd{v}_1 \in \bdd{V}^1_{\Gamma}(\Omega)$ 
	be the Scott-Zhang interpolant \cite{Scott90} of $\bdd{v}$ satisfying
	\begin{align*}
		h_K^{\frac{1}{p}-1} \| \bdd{v}_1 - \bdd{v} \|_{p, \partial K} \leq 
		C(p) \|\bdd{v}\|_{1, p, \omega_K} \text{  and  } 
		\|\bdd{v}_1\|_{1,p,\Omega} \leq C(p) \|\bdd{v}\|_{1, p, \Omega}, \quad 
		1 < p < \infty.
	\end{align*}
	Define $\bdd{v}_{\mathcal{E}} \in \bdd{V}^2$ by assigning the following 
	degrees of freedom:
	\begin{align*}
		\int_{e} \bdd{v}_{\mathcal{E}} \d{s} = \int_{e} (\bdd{v} - \bdd{v}_1) 
		\d{s} \quad \forall e \in \mathcal{E} \quad \text{and} \quad 
		\bdd{v}(\vertex{a}) = 0 \quad \forall \vertex{a} \in \mathcal{V}.
	\end{align*}
	Since $\bdd{v}, \bdd{v}_1 \in \bdd{W}^{1, p}_{\Gamma}(\Omega)$, 
	$\bdd{v}_{\mathcal{E}} \in \bdd{V}^2_{\Gamma}(\Omega)$. Let $p \in 
	(1,\infty)$ be given. A standard equivalence of norm and scaling argument 
	shows that
	\begin{align*}
		h_K^{-1} \| \bdd{v}_{\mathcal{E}} \|_{p, K} + 
		|\bdd{v}_{\mathcal{E}}|_{1, p, K} \leq C(p) h_K^{\frac{2}{p}-2} 
		\sum_{e \in \mathcal{E}_K} \left| \int_{e} \bdd{v}_{\mathcal{E}} \d{s} 
		\right| 
		&\leq C(p) h_K^{\frac{1}{p}-1} \| \bdd{v} - \bdd{v}_1 \|_{p, \partial 
			K} \\
		&\leq C(p) \|\bdd{v}\|_{1,p,\omega_K}.
	\end{align*}
	
	Define $\mathcal{L}_{\mathrm{avg}} q := \bdd{v}_1 + 
	\bdd{v}_{\mathcal{E}}$. Then, for all $K \in \mathcal{T}$, there holds
	\begin{align*}
		\int_{K} \dive \mathcal{L}_{\mathrm{avg}} q \d{\bdd{x}} = \sum_{e \in 
			\mathcal{E}_K} \int_{e} (\mathcal{L}_{\mathrm{avg}} q) \cdot 
		\unitvec{n} \d{s} = \sum_{e \in \mathcal{E}_K} \int_{e} \bdd{v} \cdot 
		\unitvec{n} \d{s} &= \int_{K} \dive \bdd{v} \d{\bdd{x}} \\
		&= \int_{K} q \d{\bdd{x}}.
	\end{align*}
	Collecting results gives $\|\mathcal{L}_{\mathrm{avg}} q \|_{1, p, 
		\Omega} 
	\leq C(p) \|\bdd{v}\|_{1,p, \Omega}$, and so \cref{eq:interp-avg-div} 
	follows from \cref{eq:invert-div-cont}.
\end{proof}

\subsubsection{Inverting vertex values}

The next result is a partial inverse of the divergence operator that matches 
values at vertices. We first need a scaled version of the polynomial lifting 
operator in \cite{Parker23}.

\begin{lemma}
	\label{lem:poly-extension-element}		
	Let $K \in \mathcal{T}$ and $N \in \mathbb{N}_0$, and let $f \in 
	C(\partial K)$ satisfying $f|_{e} 
	\in \mathcal{P}_{N}(e)$ for all $e \in \mathcal{E}_K$ and $f(\vertex{a}) = 
	0$ for all $\vertex{a} \in \mathcal{V}_K$ be given. Then, there exists 
	$u \in \mathcal{P}_{N}(K)$ such that $u|_{\partial K} = f$ and for all $p 
	\in (1,\infty)$, there holds
	\begin{align}
		\label{eq:poly-extension-element}
		h_K^{-1} \|u\|_{p, K} + |u|_{1, p, K} \leq C(p) \left( \sum_{e \in 
			\mathcal{E}_K} h_K^{\frac{1}{p}-1} \|f\|_{p, e} +  
		\iseminorm{00}{f}{1-\frac{1}{p}, p, e} \right),
	\end{align}
	where $C(p)$ also depends on shape regularity and
	\begin{align*}
		\iseminormsup{00}{f}{1-\frac{1}{p}, p, e}{p} := |f|_{1-\frac{1}{p}, p, 
			e}^p + \begin{cases}
			\int_{e} \frac{|f(\bdd{y})|^2}{\mathrm{dist}(\bdd{y}, \partial e)} 
			\d{s} & \text{if } p = 2, \\
			0 & \text{otherwise}.
		\end{cases}
	\end{align*}
\end{lemma}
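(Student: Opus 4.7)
The plan is to reduce to the reference triangle via the affine map $\bdd{F}_K : \reftri \to K$ and invoke the polynomial lifting result from \cite{Parker23}, then verify compatible scaling. Set $\hat{f} := f \circ \bdd{F}_K|_{\partial \reftri}$, which is continuous on $\partial \reftri$, restricts to a polynomial of degree at most $N$ on each reference edge, and vanishes at each vertex of $\reftri$. These are precisely the hypotheses under which (the scalar analogue of) \cite[Theorem 3.2]{Parker23} produces a polynomial lift $\hat{u} \in \mathcal{P}_N(\reftri)$ with $\hat{u}|_{\partial \reftri} = \hat{f}$ and
\begin{equation*}
\|\hat{u}\|_{1, p, \reftri} \leq C(p) \sum_{\hat{e} \in \mathcal{E}_{\reftri}} \left( \|\hat{f}\|_{p, \hat{e}} + \iseminorm{00}{\hat{f}}{1-\frac{1}{p}, p, \hat{e}} \right), \qquad p \in (1,\infty).
\end{equation*}
I would then define $u := \hat{u} \circ \bdd{F}_K^{-1}$, which by construction is a polynomial of degree at most $N$ on $K$ satisfying $u|_{\partial K} = f$.

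What remains is a scaling check. Because $\bdd{F}_K$ is affine with $|\det \nabla \bdd{F}_K| \sim h_K^2$ and $\|\nabla \bdd{F}_K\| \sim h_K$ (with constants depending only on shape regularity), standard change-of-variables identities yield
\begin{equation*}
h_K^{-1}\|u\|_{p, K} + |u|_{1, p, K} \leq C h_K^{\frac{2}{p}-1} \|\hat{u}\|_{1, p, \reftri}.
\end{equation*}
Edge-by-edge, $\|f\|_{p, e} \sim h_K^{1/p}\|\hat{f}\|_{p, \hat{e}}$, so the contribution $h_K^{1/p - 1}\|f\|_{p, e}$ carries a factor of $h_K^{2/p - 1}$. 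For the Sobolev--Slobodeckij seminorm, using $1 + (1 - 1/p)p = p$ one finds $|f|_{1-1/p, p, e}^p \sim h_K^{2-p}|\hat{f}|_{1-1/p, p, \hat{e}}^p$, i.e.\ $|f|_{1-1/p, p, e} \sim h_K^{2/p-1}|\hat{f}|_{1-1/p, p, \hat{e}}$, matching the other scaling. In the special case $p = 2$, the additional Hardy-type contribution $\int_e |f(\bdd{y})|^2 / \mathrm{dist}(\bdd{y}, \partial e) \d{s}$ is scale-invariant, which is consistent with the overall factor $h_K^{2/p - 1} = 1$ on both sides. Assembling these scalings produces \cref{eq:poly-extension-element}.

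The main obstacle I anticipate is ensuring that the cited reference result actually delivers the bound with the $W^{1-1/p, p}_{00}$ control on the right-hand side: \cite[Theorem 3.2]{Parker23} is phrased for biharmonic Dirichlet data on the reference triangle with both the trace and the normal derivative prescribed, so one must specialize the construction there to the purely Dirichlet case (dropping the normal data), and carefully track that the low-order boundary term is controlled by exactly the $W^{1-1/p, p}_{00}$ seminorm rather than a stronger norm. An alternative, if that specialization is awkward to extract, is to build the lift directly from edge-by-edge liftings via the standard blending argument, using polynomial edge bubbles and subtracting corner corrections contributed by each adjacent edge. The vanishing hypothesis $f(\vertex{a}) = 0$ is precisely what keeps the $H^{1/2}_{00}$ penalty term finite when $p=2$ and what allows the three edge liftings to be glued continuously across corners without an additional point contribution.
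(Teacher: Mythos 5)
Your proposal is correct and follows essentially the same route as the paper: map to the reference triangle, invoke the polynomial trace lifting of \cite[Theorem 3.2]{Parker23} applied to $\hat f = f\circ\bdd{F}_K$, use the vanishing of $f$ at the vertices to control the corner-compatibility (Hardy-type) terms by the $W^{1-1/p,p}_{00}$ seminorms when $p=2$, and conclude by the scaling argument you carried out explicitly (and correctly). The only cosmetic difference is that the paper states the reference-element bound with the corner integrals $\mathcal{I}_i$ appearing separately before absorbing them into the $\iseminorm{00}{\cdot}{1-\frac{1}{p},p,\hat\gamma}$ seminorms, whereas you fold that step into the cited bound directly.
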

\begin{proof}
	Let $K \in \mathcal{T}$ and $f \in C(\partial K)$ be as in the statement 
	of the lemma. Define $\hat{f} = f \circ \bdd{F}_K$. \cite[Theorem 
	3.2]{Parker23} shows that there exists $\hat{u} \in 
	\mathcal{P}_N(\reftri)$ satisfying $\hat{u}|_{\partial \reftri} = \hat{f}$ 
	and
	\begin{align*}
		\|\hat{u}\|_{1, p, \reftri}^p \leq C(p) \sum_{\hat{\gamma} \in 
			\mathcal{E}_{\reftri}} \|\hat{f} \|_{1-\frac{1}{p}, p, \hat{\gamma}}^p 
		+ C(p) \sum_{i=1}^{3} \begin{cases}
			0 & \text{if } p \neq 2, \\
			\mathcal{I}_{i}(\hat{f}_{i+1}, \hat{f}_{i+2}) & \text{if } p=2,
		\end{cases}
	\end{align*}
	where $\hat{f}_j := \hat{f}|_{\hat{\gamma}_j}$, indices are understood 
	modulo 3, and
	\begin{align*}
		\mathcal{I}_{i}(\hat{f}_{i+1}, \hat{f}_{i+2}) = \int_{0}^{1} s^{-1} | 
		\hat{f}_{i+1}( \vertex{a}_i - s \unitvec{t}_{i+1}) - 
		f_{i+2}(\vertex{a}_i + s \unitvec{t}_{i+2})|^2 \d{s}.
	\end{align*}
	Since $\hat{f}(\vertex{a}_i) = 0$, there holds
	\begin{align*}
		\sum_{i=1}^{3} \mathcal{I}_{i}(\hat{f}_{i+1}, \hat{f}_{i+2}) \leq C 
		\sum_{\hat{\gamma} \in \mathcal{E}_{\reftri} } 
		\iseminormsup{00}{\hat{f}}{\frac{1}{2}, 2, \hat{\gamma}}{2},
	\end{align*}
	and so $u = \hat{u} \circ \bdd{F}_K^{-1}$ then satisfies $u|_{\partial T} 
	= f$ and \cref{eq:poly-extension-element} follows from a standard scaling 
	argument.	
\end{proof}

\begin{lemma}
	\label{lem:interp-div-vertices}
	For $N \geq 4$ and $\vertex{a} \in \mathcal{V}$, there exists a linear 
	operator
	\begin{align*}
		\mathcal{L}_{N, \vertex{a}} : Q^{N-1}_{\Gamma} \to \bdd{V}^{N}_{\Gamma}
	\end{align*}
	satisfying the following for $q \in Q^{N-1}_{\Gamma}$:
	\begin{enumerate}
		\item[(i)] $\mathcal{L}_{N, \vertex{a}} q = \bdd{0}$ on $\Omega 
		\setminus  
		\omega_{\vertex{a}}$;
		\item[(ii)] $\dive \mathcal{L}_{N, \vertex{a}} q |_{K}(\vertex{b}) = 
		\delta_{\vertex{a} \vertex{b}} q|_{K}(\vertex{a})$ for all $\vertex{b} 
		\in \mathcal{V}_{K}$ and all $K \in \mathcal{T}_{\vertex{a}}$;
		\item[(iii)] $\int_{K} \dive  \mathcal{L}_{N, \vertex{a}} q  
		\d{\bdd{x}} = 0$ for all $K \in \mathcal{T}$;
	\end{enumerate}
	and for all $p \in [1,\infty]$, $\epsilon \in (0, 1)$, and $K \in 
	\mathcal{T}$, there holds
	\begin{align}
		\label{eq:div-interp-vertex-local}
		h_K^{-1} \| \mathcal{L}_{N, \vertex{a}} q \|_{p, K} +  
		|\mathcal{L}_{N, \vertex{a}} q|_{1,p,K} \leq 
		\tilde{\xi}_{\vertex{a}}^{-1} \|q\|_{p, \omega_{\vertex{a}}} \cdot 
		\begin{cases}
			C(p)  & \text{if } p \in (1,\infty), \\
			C(\epsilon) N^{\epsilon}  & \text{if }  p \in \{1,\infty\},
		\end{cases} 
	\end{align}
	where $C(p)$ and $C(\epsilon)$ also depend on shape regularity and
	\begin{align}
		\tilde{\xi}_{\vertex{a}} := \begin{cases}
			\xi_{\vertex{a}} & \text{if } \vertex{a} \in (\mathcal{V}_I \cup 
			\mathcal{V}_{B, 0}) \setminus \mathcal{V}_{S}, \\
			1 & \text{otherwise}.
		\end{cases}
	\end{align}
\end{lemma}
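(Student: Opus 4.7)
I would follow the $p = 2$ construction in \cite{AinCP21LE} and adapt it to yield $L^p$ stability. The operator $\mathcal{L}_{N, \vertex{a}}$ is built patch-wise around $\vertex{a}$ from prescribed skeleton data on $\omega_{\vertex{a}}$, extended into each element by polynomial lifting, and then corrected element-wise to meet the divergence conditions (ii) and (iii).

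Write $\bdd{w} := \mathcal{L}_{N, \vertex{a}} q$. I would pick a value $\bdd{c} \in \mathbb{R}^2$ at $\vertex{a}$ (with $\bdd{c} = \bdd{0}$ if $\vertex{a} \in \mathcal{V}_{B, 0}$) and, along each edge $e_i$ emanating from $\vertex{a}$, prescribe a polynomial trace of degree $N$ that equals $\bdd{c}$ at $\vertex{a}$, vanishes at the opposite endpoint, has tangential derivative $\bdd{d}_i$ at $\vertex{a}$, and carries zero normal flux $\int_{e_i} \bdd{w} \cdot \unitvec{n}_{e_i} \d{s} = 0$; set $\bdd{w}|_{\partial \omega_{\vertex{a}}} = \bdd{0}$. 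The unknowns $\bdd{c}$ and $\{\bdd{d}_i\}$ are then determined by the local linear system enforcing $\dive \bdd{w}|_{K_i}(\vertex{a}) = q|_{K_i}(\vertex{a})$ on each $K_i \in \mathcal{T}_{\vertex{a}}$. This system's matrix has entries of the form $\sin(\theta_i + \theta_{i+1})$, and its invertibility/conditioning is captured precisely by $\tilde{\xi}_{\vertex{a}}^{-1}$. At singular vertices, the alternating-sum identity \eqref{eq:alternating-sum-condition} inherent in $Q^{N-1}_{\Gamma}$ provides exactly the compatibility needed to solve the otherwise rank-deficient system uniformly in the geometry, so $\tilde{\xi}_{\vertex{a}} = 1$; at $\mathcal{V}_{B, 1}$ vertices no compatibility is required and again $\tilde{\xi}_{\vertex{a}} = 1$.

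With the skeleton data in place, I would invoke \cref{lem:poly-extension-element} to lift the boundary trace on each $K_i$ to a function in $\bdd{V}^N(K_i)$. The divergence theorem together with the zero normal fluxes gives $\int_{K_i} \dive \bdd{w} = 0$, yielding (iii). The divergence of $\bdd{w}|_{K_i}$ at the other vertices $\vertex{b} \in \mathcal{V}_{K_i} \setminus \{\vertex{a}\}$ need not vanish; to correct this I would subtract $\mathcal{L}_{\dive, K_i}(\tilde r_i)$ from \cref{cor:invert-div-element-phys}, where $\tilde r_i \in Q_0^{N-1}(K_i)$ is chosen to exactly cancel the unwanted vertex values of $\dive \bdd{w}|_{K_i}$ while retaining zero mean and the correct value at $\vertex{a}$. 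Since $\mathcal{L}_{\dive, K_i}(\tilde r_i)$ vanishes on $\partial K_i$, global continuity and property (i) are preserved.

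The estimate \eqref{eq:div-interp-vertex-local} is obtained by propagating norms through these steps: the local solve yields $|\bdd{c}|, |\bdd{d}_i| \leq C \tilde{\xi}_{\vertex{a}}^{-1} \max_i |q|_{K_i}(\vertex{a})|$; the polynomial-lift estimates of \cref{lem:poly-extension-element} convert edge-trace data into $W^{1, p}(K_i)$ bounds; the vertex-concentrated edge profiles are designed so that their $L^p$ norm compensates the inverse-inequality loss between $\max_i |q|_{K_i}(\vertex{a})|$ and $\|q\|_{p, \omega_{\vertex{a}}}$, yielding the $C(p)$ constant for $p \in (1, \infty)$; and \cref{cor:invert-div-element-phys} contributes the $N^{\epsilon}$ loss only in the endpoint cases $p \in \{1, \infty\}$. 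The main obstacle will be simultaneously managing the near-singular local linear system to extract the geometric factor $\tilde{\xi}_{\vertex{a}}^{-1}$ and choosing the edge profiles carefully so that the passage through pointwise vertex data is converted to $L^p$ data with $N$-independent constants throughout $p \in (1, \infty)$; the singular-vertex case requires particular care because the compatibility condition from $Q^{N-1}_{\Gamma}$ must be invoked explicitly.
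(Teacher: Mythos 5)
Your overall architecture matches the paper's proof: the algebraic system for the tangential-derivative vectors $\bdd{d}_i$ with conditioning $\tilde{\xi}_{\vertex{a}}^{-1}$ (this is exactly \cite[Lemmas 4.2--4.3]{AinCP21LE}, including the treatment of singular and $\mathcal{V}_{B,1}$ vertices), edge profiles concentrated at $\vertex{a}$, the lift via \cref{lem:poly-extension-element}, zero normal flux on each edge to secure (iii), and the compensation of the inverse-inequality loss $\|q\|_{\infty,\omega_{\vertex{a}}} \lesssim h_K^{-2/p}N^{4/p}\|q\|_{p,\omega_{\vertex{a}}}$ by the $N^{-2(1+1/p)}$ decay of the edge profiles. (The extra vertex value $\bdd{c}$ you introduce is harmless but unnecessary: the divergence at $\vertex{a}$ depends only on the two tangential derivatives there, and the paper simply takes the function value at $\vertex{a}$ to be zero.)

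There is, however, a genuine flaw in your correction step. You propose to cancel the unwanted values of $\dive\bdd{w}|_{K_i}$ at the vertices $\vertex{b}\in\mathcal{V}_{K_i}\setminus\{\vertex{a}\}$ by subtracting $\mathcal{L}_{\dive,K_i}(\tilde r_i)$ with $\tilde r_i\in Q_0^{N-1}(K_i)$. But every element of $Q_0^{N-1}(K_i)$ vanishes at all three vertices of $K_i$ by definition, so $\dive\mathcal{L}_{\dive,K_i}(\tilde r_i)=\tilde r_i$ cannot cancel a nonzero vertex value; and no interior correction supported in $\bdd{V}_0^N(K_i)$ can do so anyway, since the divergence at $\vertex{b}$ is determined by the two tangential derivatives of the trace along the edges meeting at $\vertex{b}$, which such a correction leaves untouched. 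The resolution — and the point your sketch misses — is that no correction is needed if the edge profiles are designed to vanish \emph{together with their first derivatives} at the far endpoints (your traces are only required to vanish in value there). This is exactly what the paper's $\tilde{\Upsilon}_N$ from \cref{eq:1d-interp-poly-noavg-def} provides: then both tangential derivatives of the trace vanish at each $\vertex{b}\neq\vertex{a}$, hence $\grad\bdd{\psi}_i(\vertex{b})=\bdd{0}$ for any lift of that trace, and (ii) holds at the other vertices automatically. With that modification your argument goes through; without it, the construction as written does not satisfy (ii).
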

\begin{proof}
	Let $N \geq 4$, $\vertex{a} \in \mathcal{V}$, and $q \in Q_{\Gamma}^{N-1}$ 
	be given.
	
	\noindent \textbf{Step 1: Interior vertex. } Assume first that $\vertex{a} 
	\in \mathcal{V}_I$. Label the elements as in \cref{fig:internal schema} 
	with $m = |\mathcal{T}_{\vertex{a}}|$ and let $e_i$, $1 \leq i \leq m$, 
	denote the common edge between $K_{i}$ and $K_{i-1}$ with unit tangent 
	$\unitvec{t}_i$ pointing towards $\vertex{a}$. \\
	
	\noindent \textbf{Part (a): Algebraic system. } Thanks to \cite[Lemma 
	4.2]{AinCP21LE}, there exist vectors $\bdd{d}_1, \bdd{d}_2, \ldots, 
	\bdd{d}_m \in \mathbb{R}^2$ satisfying
	\begin{subequations}
		\label{eq:proof:div-interp-solution}
		\begin{alignat}{2}
			%
			(\sin \theta_i) q|_{K_i}(\vertex{a})  &= \bdd{d}_i \cdot 
			\unitvec{n}_{i+1} - \bdd{d}_{i+1} \cdot \unitvec{n}_{i}, \qquad & 
			&1 \leq i \leq m, \\
			\max_{1 \leq i \leq m} |\bdd{d}_i| &\leq C 
			\tilde{\xi}_{\vertex{a}}^{-1} \max_{1 \leq i \leq m} 
			|q|_{K_i}(\vertex{a})|, \qquad & &
		\end{alignat}
	\end{subequations}
	where the indices are understood modulo 3, and $\unitvec{n}_i = 
	-\unitvec{t}_i^{\perp}$. \\
	
	\noindent \textbf{Part (b): Extension from the boundary. } For each 
	$i \in 
	\{1,2,\ldots, m\}$, define boundary data $\bdd{f}_i : \gamma_i \to 
	\mathbb{R}^2$ by
	\begin{align}
		\label{eq:proof:div-interp-trace-def}
		\bdd{f}_i(\bdd{y}) :=  \frac{|\gamma_i|}{2}\tilde{\Upsilon}_N \left( 
		\frac{2|\vertex{a} - \bdd{y}|}{|\gamma_i|} -1 \right) \bdd{d}_i, 
		\qquad \bdd{y} \in e_i,
	\end{align}
	where $\tilde{\Upsilon}_N$ is given by \cref{eq:1d-interp-poly-noavg-def} 
	with any fixed $\alpha > 7/2$, say $\alpha = 4$. Note that 
	$\bdd{f}_i|_{\partial e_i} = \bdd{0}$, and so 
	\cref{lem:poly-extension-element} shows that there exists $\bdd{\psi}_i 
	\in \mathcal{P}_N(K_i)^2$, $1 \leq i \leq m$, satisfying
	\begin{align*}
		\bdd{\psi}_i|_{e_i} = \bdd{f}_i, \quad \bdd{\psi}_i|_{e_{i+1}} = 
		\bdd{f}_{i+1}, \quad \text{and} \quad \bdd{\psi}_i = \bdd{0} \text{ on 
		} \partial K \setminus \{ e_i \cup e_{i+1} \}.
	\end{align*}
	Moreover, for $p \in (1,\infty)$, there holds
	\begin{align*}
		h_K^{-\frac{2}{p}} \|\bdd{\psi}_i\|_{p, K} + h_K^{1-\frac{2}{p}} 
		|\bdd{\psi}_i|_{1, p, K} \leq C(p)  \left( \sum_{j=i}^{i+1} 
		h_K^{-\frac{1}{p}} \|\bdd{f}_j\|_{p, e_j} + h_K^{1-\frac{2}{p}} 
		\iseminorm{00}{\bdd{f}_j}{1-\frac{1}{p}, p, e_j} \right)
	\end{align*}
	along with the bound
	\begin{multline*}
		\sum_{j=i}^{i+1} h_K^{-\frac{1}{p}} \|\bdd{f}_j\|_{p, e_j} + 
		h_K^{1-\frac{2}{p}} 
		\iseminorm{00}{\bdd{f}_j}{1-\frac{1}{p}, p, e_j} \\ 
		\leq C(p) h_K \left( \max_{j=i, i+1} |\bdd{d}_j| \right) \cdot 
		\begin{cases}
			\| \tilde{\Upsilon}_N \|_{1 - \frac{1}{p}, p, I} & \text{if } p 
			\neq 2, \\
			\| \tilde{\Upsilon}_N \|_{2, I} + 
			\iseminorm{00}{\tilde{\Upsilon}_N}{\frac{1}{2}, 2, I} & \text{if } 
			p = 2,
		\end{cases}
	\end{multline*} 	
	If $p \in (1,\infty) \setminus \{2\}$, we apply 
	\cref{eq:1d-interp-poly-sobolev} to conclude that
	\begin{align}
		\label{eq:proof:psi-i-decay}
		h_K^{-\frac{2}{p}} \|\bdd{\psi}_i\|_{p, K} + h_K^{1-\frac{2}{p}} 
		|\bdd{\psi}_i|_{1, p, K} \leq C(p) \tilde{\xi}_{\vertex{a}}^{-1} h_K 
		N^{-\frac{4}{p}} \|q\|_{\infty, \omega_{\vertex{a}}},
	\end{align}
	while for $p=2$, we use \cref{eq:1d-interp-poly-sobolev} and function 
	space interpolation 
	to obtain 
	\begin{align*}
		\| \tilde{\Upsilon}_N \|_{2, I} + 
		\iseminorm{00}{\tilde{\Upsilon}_N}{\frac{1}{2}, 2, \gamma} \leq C \| 
		\tilde{\Upsilon}_N \|_{2, I}^{\frac{1}{2}} \| \tilde{\Upsilon}_N 
		\|_{1, 2, I}^{\frac{1}{2}} \leq C N^{-2},
	\end{align*}
	and so \cref{eq:proof:psi-i-decay} holds for all $p \in (1,\infty)$. For 
	$p \in \{1,\infty\}$, we apply analogous arguments to the proof of 
	\cref{lem:invert-div-ref-element} to conclude that for all $\epsilon \in 
	(0, 1)$ there holds
	\begin{align*}
		h_K^{-\frac{2}{p}} \|\bdd{\psi}_i\|_{p, K} + h_K^{1-\frac{2}{p}} 
		|\bdd{\psi}_i|_{1, p, K} \leq C(e) \tilde{\xi}_{\vertex{a}}^{-1} h_K 
		N^{-\frac{4}{p} + \epsilon} \|q\|_{\infty, \omega_{\vertex{a}}} \quad 
		\forall p \in \{1,\infty\}.
	\end{align*}
	Applying \cref{eq:lp-inverse-tri} and a standard scaling argument gives
	\begin{align*}
		\|q\|_{\infty, \omega_{\vertex{a}}} \leq C(p) h_K^{-\frac{2}{p}} 
		N^{\frac{4}{p}} \|q\|_{p, \omega_{\vertex{a}}} \qquad \forall p \in 
		[1,\infty],
	\end{align*}
	and so for all $p \in [1,\infty]$ and $\epsilon \in (0, 1)$ there holds
	\begin{align*}
		h_K^{-\frac{2}{p}} \|\bdd{\psi}_i\|_{p, K} + h_K^{1-\frac{2}{p}} 
		|\bdd{\psi}_i|_{1, p, K} \leq \tilde{\xi}_{\vertex{a}}^{-1} 
		h_K^{1-\frac{2}{p}} \|q\|_{p, \omega_{\vertex{a}}} \begin{cases}
			C(p)  & \text{if } p \in (1,\infty), \\
			C(\epsilon)  N^{\epsilon} & \text{if } p \in \{1, \infty\}.
		\end{cases}
	\end{align*}
	
	\textbf{Part (c): Verifying interpolation. } Also note that $\grad 
	\bdd{\psi}_i(\vertex{b}) = \bdd{0}$ for $\vertex{b} \in \mathcal{V}_{K_i} 
	\setminus \{ \vertex{a} \}$, and so there holds
	\begin{align*}
		( \sin \theta_i )\dive \bdd{\psi}_i(\vertex{a}) &= \frac{\partial 
			\bdd{\psi}_i}{\partial \unitvec{t}_i} (\vertex{a}) \cdot 
		\unitvec{n}_{i+1} -  \frac{\partial \bdd{\psi}_i}{\partial 
			\unitvec{t}_{i+1}}(\vertex{a}) \cdot \unitvec{n}_{i} \\ 
		&=  \frac{\partial \bdd{f}_i}{\partial \unitvec{t}_i} (\vertex{a}) 
		\cdot \unitvec{n}_{i+1} -  \frac{\partial \bdd{f}_{i+1}}{\partial 
			\unitvec{t}_{i+1}}(\vertex{a}) \cdot \unitvec{n}_{i} \\
		&= \Upsilon_N'(-1) \left( \bdd{d}_i \cdot \unitvec{n}_{i+1} - 
		\bdd{d}_{i+1} \cdot \unitvec{n}_i \right) \\
		&= (\sin \theta_i) q|_{K_i}(\vertex{a}) ,
	\end{align*}
	while 
	\begin{align*}
		\int_{K} \dive \bdd{\psi}_i(\bdd{x}) \d{\bdd{x}} = \int_{\partial K} 
		\bdd{\psi}_i \cdot \unitvec{n} \d{s} = \sum_{j=i}^{i+1} \int_{e_j} 
		\bdd{f}_j \cdot \unitvec{n}_j \d{s} = 0
	\end{align*}
	since $\int_{-1}^{1} \tilde{\Upsilon}_N(t) \d{t} = 0$. We then define 
	$\mathcal{L}_{N, \vertex{a}} q|_{K_i} = \bdd{\psi}_i$, $1 \leq i \leq m$, 
	and $\mathcal{L}_{N, \vertex{a}} = \bdd{0}$ on $\Omega \setminus 
	\omega_{\vertex{a}}$. Note that $\mathcal{L}_{N, \vertex{a}} q \in 
	\bdd{V}_{\Gamma}^{N}$ and satisfies (i-iii) and 
	\cref{eq:div-interp-vertex-local} by construction.	\\

	\noindent \textbf{Step 2: Boundary vertex. } Now let $\vertex{a} \in 
	\mathcal{V}_B$ and label the elements as in \cref{fig:boundary schema} 
	with $m = |\mathcal{T}_{\vertex{a}}|$. We label the edges in 
	$\mathcal{E}_{\vertex{a}}$ analogously with $e_1, e_{m+1} \in 
	\mathcal{E}_B$ Thanks to \cite[Lemma 4.3]{AinCP21LE}, there exist 
	$\bdd{d}_1, \bdd{d}_2, \ldots, \bdd{d}_{m+1} \in \mathbb{R}^2$ satisfying 
	\cref{eq:proof:div-interp-solution} and additionally $\bdd{d}_j = \bdd{0}$ 
	if $e_j \in \Gamma_{0}$ for $j \in \{1, m+1\}$. With these coefficients in 
	hand, we may again define $\bdd{f}_i$ according to 
	\cref{eq:proof:div-interp-trace-def} for $m \in \{1,2,\ldots, m+1\}$ and 
	construct $\mathcal{L}_{N, \vertex{a}} q$ as above.
\end{proof}

\subsubsection{Proof of \cref{thm:invert-div}}
\label{sec:proof-invert-div}

Let $N \geq 4$ and $q \in Q_{\Gamma}^{N-1}$ be given. Let
\begin{align*}
	\bdd{\psi} := \mathcal{L}_{\mathrm{avg}} q + \sum_{\vertex{a} \in 
		\mathcal{V}} \mathcal{L}_{N, \vertex{a}} (I - \dive 
	\mathcal{L}_{\mathrm{avg}})q \quad \text{and} \quad r := q - \dive 
	\bdd{\psi}.
\end{align*}
Thanks to \cref{lem:interp-avg-div,lem:interp-div-vertices}, there holds
\begin{align*}
	\|\bdd{\psi}\|_{1, p, \Omega} \leq C(p) \|q\|_{p, \Omega} \qquad \forall p 
	\in (1,\infty) \quad \text{and} \quad r|_{K} \in Q_0^{N-1}(K) \qquad 
	\forall K \in \mathcal{T}. 
\end{align*}
We then define $\mathcal{L}_{\dive, N} q$ element-by-element by
\begin{align*}
	\mathcal{L}_{\dive, N} q|_{K} := \bdd{\psi}|_{K} + \mathcal{L}_{\dive, K}( 
	r|_{K}) \qquad \text{on } K,
\end{align*}
where $\mathcal{L}_{\dive, K}$ is given by \cref{cor:invert-div-element-phys}. 
$\mathcal{L}_{\dive, N} q \in \bdd{V}_{\Gamma}^{N}$ by construction and the
properties \cref{eq:invert-div-fem} now follow from 
\cref{cor:invert-div-element-phys}. \hfill \qedsymbol

\section{$L^p$ stability of the $L^2$-projection}
\label{sec:lp-stability-l2-projection}

For $N \in \mathbb{N}_0$ and a collection of elements $\mathcal{U} 
\subseteq \mathcal{T} \cup \{ \reftri \}$, let 
$\mathcal{V}_{\mathcal{U}}$ denote the 
collection of mesh vertices of $\mathcal{U}$ and  
$\mathcal{V}_{S, \mathcal{U}} := \{ \vertex{a} \in \mathcal{V}_S \cap 
\mathcal{V}_{\mathcal{U}} : \mathcal{T}_{\vertex{a}} \subset \mathcal{U} 
\}$ and define
\begin{align*}
	\widetilde{DG}_{\Gamma}^N(\mathcal{U}) &:= \left\{ q \in 
	DG^N(\mathcal{U}) : \mathcal{A}_{\vertex{a}}(q) = 0 \ \forall 
	\vertex{a} \in \mathcal{V}_{S, \mathcal{U}} \right\},
\end{align*}
where $\mathcal{A}_{\vertex{a}}$ is defined in 
\cref{eq:alternating-sum-condition}. Note that $Q_{\Gamma}^{N} = 
\widetilde{DG}_{\Gamma}^N(\mathcal{T}) \cap L^1_{\Gamma}(\Omega)$. Let 
$\omega_{\mathcal{U}} := \mathrm{int}(\cup_{K \in \mathcal{U}} \bar{K})$ 
and let $\tilde{\mathbb{P}}_{N, \mathcal{U}} : L^1(\omega_{\mathcal{U}}) 
\to \widetilde{DG}^N_{\Gamma}(\mathcal{U})$ be the 
$L^2(\omega_{\mathcal{U}})$-projection operator onto $ 
\widetilde{DG}^N_{\Gamma}(\mathcal{U})$:
\begin{align*}
	( \tilde{\mathbb{P}}_{N, \mathcal{U}} q, r )_{\omega_{\mathcal{U}}} = 
	(q, r)_{\omega_{\mathcal{U}}} \qquad \forall r \in 
	\widetilde{DG}^N_{\Gamma}(\mathcal{U}), \ \forall q \in 
	L^1(\omega_{\mathcal{U}}).
\end{align*}
The first result shows that the global projection $\tilde{\mathbb{P}}_{N, 
	\mathcal{T}}$ is local to elements or vertex patches provided that the 
mesh satisfies \ref{cond:mesh-disjoint-sing}.

\begin{lemma}
	\label{lem:l2-projection-local}
	Suppose that the mesh satisfies \ref{cond:mesh-disjoint-sing} and let 
	$N \in \mathbb{N}_0$. Then, for all $q \in L^1(\Omega)$, there holds
	\begin{subequations}
		\begin{alignat}{2}
			\label{eq:l2-projection-element-equiv}
			(\tilde{\mathbb{P}}_{N, \mathcal{T}} q)|_{K} &= 
			\tilde{\mathbb{P}}_{N, K} (q|_{K}) \qquad & &\forall K \in 
			\mathcal{T} :  \mathcal{V}_K \cap \mathcal{V}_S = \emptyset, 
			\\
			\label{eq:l2-projection-sing-patch-equiv}
			(\tilde{\mathbb{P}}_{N, \mathcal{T}} 
			q)|_{\omega_{\vertex{a}}} &= \tilde{\mathbb{P}}_{N, 
				\mathcal{T}_{\vertex{a}}} (q|_{\omega_{\vertex{a}}}) \qquad & 
			&\forall \vertex{a} \in \mathcal{V}_S.
		\end{alignat}
	\end{subequations}
\end{lemma}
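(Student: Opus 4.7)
The plan is to exploit the fact that \ref{cond:mesh-disjoint-sing} decouples the singular-vertex compatibility conditions \cref{eq:alternating-sum-condition}, so that $\widetilde{DG}^{N}_{\Gamma}(\mathcal{T})$ splits as an $L^2(\Omega)$-orthogonal direct sum of local spaces supported on pairwise disjoint patches. The $L^2$-projection then reduces automatically to the corresponding local projections on each block.

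First, I would verify that \ref{cond:mesh-disjoint-sing} forces every element $K \in \mathcal{T}$ to contain at most one singular vertex in $\mathcal{V}_K$: otherwise two distinct vertices $\vertex{a}, \bdd{b} \in \mathcal{V}_S$ would share the edge joining them inside $K$, in violation of \ref{cond:mesh-disjoint-sing}. Consequently the patches $\{\mathcal{T}_{\vertex{a}}\}_{\vertex{a} \in \mathcal{V}_S}$ are pairwise disjoint, and together with the singletons $\{K \in \mathcal{T} : \mathcal{V}_K \cap \mathcal{V}_S = \emptyset\}$ they partition $\mathcal{T}$. Since $\mathcal{A}_{\vertex{a}}(q)$ depends only on the restrictions of $q$ to the elements of $\mathcal{T}_{\vertex{a}}$, the singular-vertex constraints at distinct singular vertices act on disjoint blocks, which yields the direct sum decomposition
\begin{align*}
\widetilde{DG}^{N}_{\Gamma}(\mathcal{T}) \;=\; \bigoplus_{\substack{K \in \mathcal{T} \\ \mathcal{V}_K \cap \mathcal{V}_S = \emptyset}} \mathcal{P}_N(K) \;\oplus\; \bigoplus_{\vertex{a} \in \mathcal{V}_S} \widetilde{DG}^{N}_{\Gamma}(\mathcal{T}_{\vertex{a}}),
\end{align*}
where each summand is understood as extended by zero to $\Omega$. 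Because the supports are pairwise disjoint, this decomposition is $L^2(\Omega)$-orthogonal.

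Finally, given $q \in L^1(\Omega)$, I would define a candidate $\tilde{p} \in \widetilde{DG}^{N}_{\Gamma}(\mathcal{T})$ piece-by-piece by setting $\tilde{p}|_K := \tilde{\mathbb{P}}_{N, K}(q|_K)$ on each element $K$ with $\mathcal{V}_K \cap \mathcal{V}_S = \emptyset$ and $\tilde{p}|_{\omega_{\vertex{a}}} := \tilde{\mathbb{P}}_{N, \mathcal{T}_{\vertex{a}}}(q|_{\omega_{\vertex{a}}})$ on each singular patch; this assignment is unambiguous since the patches and singletons are disjoint. For any test function $r \in \widetilde{DG}^{N}_{\Gamma}(\mathcal{T})$, decomposing $r$ according to the direct sum above reduces $(q - \tilde{p}, r)_\Omega$ to a sum of terms that vanish individually by the defining property of each local projection. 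Uniqueness of the $L^2$-projection then gives $\tilde{p} = \tilde{\mathbb{P}}_{N, \mathcal{T}} q$, which is precisely \cref{eq:l2-projection-element-equiv} and \cref{eq:l2-projection-sing-patch-equiv}. No step presents a genuine obstacle; the only nontrivial observation is the first one, that two singular vertices cannot share an element under \ref{cond:mesh-disjoint-sing}, which is exactly what makes the constraints decouple into patch-local blocks.
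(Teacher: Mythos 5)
Your proof is correct and rests on the same key observation as the paper's: under \ref{cond:mesh-disjoint-sing} no element contains two singular vertices, so the constraints $\mathcal{A}_{\vertex{a}}$ decouple patch by patch and zero extensions of locally admissible functions belong to $\widetilde{DG}^N_{\Gamma}(\mathcal{T})$. The paper reaches the conclusion slightly more directly by testing the global projection against such zero-extended local test functions, whereas you package the same fact as an orthogonal direct-sum decomposition and verify a piecewise-defined candidate; the mathematical content is identical.
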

\begin{proof}
	If $K \in \mathcal{T}$ satisfies $\mathcal{V}_K \cap \mathcal{V}_S = 
	\emptyset$, then the zero extension of $r \in \mathcal{P}_N(K)$ 
	satisfies $r \in \widetilde{DG}^N_{\Gamma}$, and so 
	\cref{eq:l2-projection-element-equiv} follows. Similarly, if 
	$\vertex{a} \in \mathcal{V}_S$, then \ref{cond:mesh-disjoint-sing} 
	ensures that $\vertex{a}$ is the only singular vertex of elements in 
	$\mathcal{T}_{\vertex{a}}$, and so the zero extension of $r \in 
	\widetilde{DG}^N_{\Gamma}(\mathcal{T}_{\vertex{a}})$ belongs to 
	$\widetilde{DG}^N_{\Gamma}$.
\end{proof}

The next result provides $L^p$ stability bounds of $\tilde{\mathbb{P}}_{N, 
	\mathcal{U}}$ explicit in the polynomial degree.	
\begin{lemma}
	\label{lem:l2-proj-lp-stability-local}
	Let $N \in \mathbb{N}$ and $\mathcal{U}$ be either a single element 
	$K \in \mathcal{T}$ or a patch around vertex 
	$\mathcal{T}_{\vertex{a}}$ for some $\vertex{a} \in \mathcal{V}_S$. 
	For all  $p \in [1,\infty]$, there holds
	\begin{align}
		\label{eq:l2-proj-lp-stability-local}
		\| \tilde{\mathbb{P}}_{N, \mathcal{U}} q \|_{p, 
			\omega_{\mathcal{U}}} \leq C N^{3\left| \frac{1}{2} - 
			\frac{1}{p} \right|} \|q\|_{p, \omega_{\mathcal{U}}} \qquad 
		\forall q \in L^p(\omega_{\mathcal{U}}),
	\end{align}
	where $C$ depends only on shape regularity. Consequently, if the 
	mesh satisfies \ref{cond:mesh-disjoint-sing}, then there holds
	\begin{align}
		\label{eq:l2-proj-lp-stability-dg}
		\| \tilde{\mathbb{P}}_{N, \mathcal{T}} q \|_{p, \Omega} \leq C 
		N^{3\left| \frac{1}{2} - \frac{1}{p} \right|} \|q\|_{p, \Omega} 
		\qquad \forall q \in L^p(\Omega), \ \forall p \in [1,\infty].
	\end{align}
\end{lemma}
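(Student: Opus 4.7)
The plan is to first establish \eqref{eq:l2-proj-lp-stability-local} on the reference element, propagate it to each singular-vertex patch by handling a rank-one constraint, and then obtain \eqref{eq:l2-proj-lp-stability-dg} by piecing the local estimates together via the locality of \cref{lem:l2-projection-local} under \ref{cond:mesh-disjoint-sing}.

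\textbf{Single element.} If $\mathcal{U} = K$ with $\mathcal{V}_K \cap \mathcal{V}_S = \emptyset$, then $\widetilde{DG}^N_\Gamma(K) = \mathcal{P}_N(K)$ and $\tilde{\mathbb{P}}_{N, K}$ is the standard $L^2$-projection onto polynomials. Affine scaling reduces matters to the reference triangle $\reftri$. Letting $\{\phi_{\bdd{\alpha}}\}$ be any $L^2(\reftri)$-orthonormal basis of $\mathcal{P}_N(\reftri)$ and $K_N(\hat{\bdd{x}}, \hat{\bdd{y}}) := \sum_{\bdd{\alpha}} \phi_{\bdd{\alpha}}(\hat{\bdd{x}}) \phi_{\bdd{\alpha}}(\hat{\bdd{y}})$, the classical diagonal kernel bound for orthogonal polynomials on the simplex (readable off the explicit vertex values of Jacobi polynomials in the Koornwinder/Dubiner basis) supplies $\sup_{\hat{\bdd{x}}} K_N(\hat{\bdd{x}}, \hat{\bdd{x}}) \leq C N^3$. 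Cauchy--Schwarz then yields the sharp pointwise inequality $\|r\|_{\infty, \reftri} \leq C N^{3/2} \|r\|_{2, \reftri}$ for $r \in \mathcal{P}_N(\reftri)$, from which $\|\tilde{\mathbb{P}}_{N, \reftri}\|_{L^\infty \to L^\infty} \leq C N^{3/2}$ follows at once. Together with the trivial bound $\|\tilde{\mathbb{P}}_{N, \reftri}\|_{L^2 \to L^2} \leq 1$ and the matching $L^1 \to L^1$ bound obtained by self-adjointness, Riesz--Thorin interpolation delivers $C N^{3|1/2 - 1/p|}$ for every $p \in [1,\infty]$; affine scaling preserves this bound on $K$.

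\textbf{Singular-vertex patch.} If $\mathcal{U} = \mathcal{T}_{\vertex{a}}$ with $\vertex{a} \in \mathcal{V}_S$, then $\widetilde{DG}^N_\Gamma(\mathcal{T}_{\vertex{a}})$ is the codimension-one subspace of $DG^N(\mathcal{T}_{\vertex{a}})$ cut out by $\mathcal{A}_{\vertex{a}}(r) = 0$. Let $\Pi$ be the element-wise $L^2$-projection onto $DG^N(\mathcal{T}_{\vertex{a}})$ and let $R \in DG^N(\mathcal{T}_{\vertex{a}})$ be the Riesz representer of $\mathcal{A}_{\vertex{a}}$ in $L^2(\omega_{\vertex{a}})$. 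Then
\begin{equation*}
\tilde{\mathbb{P}}_{N, \mathcal{T}_{\vertex{a}}} q = \Pi q - \frac{(q, R)_{\omega_{\vertex{a}}}}{\|R\|_{2, \omega_{\vertex{a}}}^2} R,
\end{equation*}
since the second term realizes the orthogonal projection onto the one-dimensional complement $\spann\{R\}$. The single-element step controls $\Pi$ element-by-element with the desired constant, so it remains to show that the rank-one correction obeys $\|R\|_p \|R\|_{p'}/\|R\|_2^2 \leq C N^{3|1/2 - 1/p|}$, after which H\"{o}lder bounds $|(q, R)| \leq \|q\|_p \|R\|_{p'}$ and closes the estimate. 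Since $R|_{K_i} = (-1)^i K_N^{(K_i)}(\vertex{a}, \cdot)$, the same kernel bound yields $\|R\|_2^2 = \sum_i K_N^{(K_i)}(\vertex{a}, \vertex{a}) \asymp N^3 h^{-2}$. The scaled sharp inverse inequality $\|r\|_{p, K} \leq C (N^3 h_K^{-2})^{1/2 - 1/p} \|r\|_{2, K}$ on each $K_i$ for $p \geq 2$, together with its H\"{o}lder counterpart $\|r\|_{p, K} \leq C h_K^{2(1/p - 1/2)} \|r\|_{2, K}$ for $p \leq 2$, produces exact cancellation of the $h$ factors in $\|R\|_p \|R\|_{p'}$ and leaves only the required $N^{3|1/2 - 1/p|} \|R\|_2^2$ dependence.

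\textbf{Global bound.} Under \ref{cond:mesh-disjoint-sing} the patches $\mathcal{T}_{\vertex{a}}$ ($\vertex{a} \in \mathcal{V}_S$) are pairwise element-disjoint, so $\mathcal{T}$ partitions as $\mathcal{T}_{\mathrm{reg}} \sqcup \bigsqcup_{\vertex{a} \in \mathcal{V}_S} \mathcal{T}_{\vertex{a}}$ with every $K \in \mathcal{T}_{\mathrm{reg}}$ free of singular vertices. \Cref{lem:l2-projection-local} localizes $\tilde{\mathbb{P}}_{N, \mathcal{T}} q$ on each piece, and summing the $p$-th powers of the local estimates (taking suprema when $p = \infty$) yields \eqref{eq:l2-proj-lp-stability-dg}. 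The main obstacle is the diagonal kernel bound $\sup_{\hat{\bdd{x}}} K_N(\hat{\bdd{x}}, \hat{\bdd{x}}) \leq C N^3$ on the simplex, which is the special feature responsible for the exponent $3/2$; the generic inverse inequality \eqref{eq:lp-inverse-tri} with $d = 2$ would only supply the weaker $4|1/2 - 1/p|$.
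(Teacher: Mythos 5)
Your overall architecture mirrors the paper's: a Lebesgue-constant bound on a single element followed by interpolation with $p=2$, a rank-one orthogonal correction on a singular-vertex patch (your Riesz representer $R$ is, up to normalization, the function $b_{\vertex{a}}$ built from $P_N^{(0,2)}$ that the paper takes from \cite{Grable24}), and the disjoint-patch decomposition for the global bound. However, the quantitative linchpin of your argument is false. On the triangle the diagonal reproducing-kernel bound is $\sup_{\hat{\bdd{x}}} K_N(\hat{\bdd{x}},\hat{\bdd{x}}) \sim N^4$ (attained at the vertices), not $N^3$, and correspondingly the sharp Nikolskii inequality is $\|r\|_{\infty,\reftri}\le C N^2\|r\|_{2,\reftri}$, not $CN^{3/2}\|r\|_{2,\reftri}$. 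A concrete witness is $r = P_N^{(0,2)}(1-2\lambda_{\reftri,\vertex{a}})$: by the computations in Step~2(a) of the paper's proof (via Szeg\H{o}'s Theorem~7.32.1 and \cite{Grable24}), $\|r\|_{\infty,\reftri}\sim N^2$ while $\|r\|_{2,\reftri}\sim 1$. This error propagates through both local steps: with the correct exponent, your Cauchy--Schwarz chain only yields the Lebesgue constant $N^{2}$ on an element, and your inverse-inequality computation gives $\|R\|_p\|R\|_{p'}/\|R\|_{2}^2 \lesssim N^{4\left|\frac12-\frac1p\right|}$ on a patch --- i.e.\ exactly the generic rate you correctly identify as insufficient.

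The deeper point is that the $N^{3/2}$ Lebesgue constant cannot be extracted from the diagonal of the kernel at all: replacing $\sup_{\hat{\bdd{x}}}\|K_N(\hat{\bdd{x}},\cdot)\|_{1,\reftri}$ by $|\reftri|^{1/2}\sup_{\hat{\bdd{x}}}K_N(\hat{\bdd{x}},\hat{\bdd{x}})^{1/2}$ via Cauchy--Schwarz discards the off-diagonal decay of the kernel, which is precisely where the gain from $N^2$ to $N^{3/2}$ lives (compare 1D Legendre: Lebesgue constant $\sim N^{1/2}$ versus $\sup_x K_N(x,x)^{1/2}\sim N$). The paper therefore invokes the kernel estimates of Dai and Xu \cite{DaiXu09} for orthogonal expansions on the simplex to obtain the $L^1\to L^1$ and $L^\infty\to L^\infty$ bounds $CN^{3/2}$ directly, and on the patch it uses the genuinely sublinear bound $\int_0^1|P_N^{(0,2)}(t)|\,\mathrm{d}t\le CN^{-1/2}$ (Szeg\H{o}, Theorem~7.34) to get $\|b_{\vertex{a}}\|_{1,\omega_{\vertex{a}}}\lesssim N^{-1/2}$ --- again an estimate invisible to H\"older applied to the $L^2$ norm. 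Your single-element and patch steps would need inputs of this type; your global assembly step is fine and matches the paper's.
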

\begin{proof}
	\textbf{Step 1: Single element. } Taking $d=2$ and $\kappa_i = 1/2$ 
	for $i \in \{1,2,3\}$,in \cite[Corollary 2.9]{DaiXu09} gives
	\begin{align*}
		\| \tilde{\mathbb{P}}_{N, \reftri} q \|_{p, \reftri} \leq C 
		N^{\frac{3}{2}} \|q\|_{p, \reftri} \qquad \forall q \in 
		L^p(\reftri), \ p \in \{1, \infty\}. 
	\end{align*}
	Interpolating with $p=2$ and applying a standard scaling argument, we 
	obtain
	\begin{align*}
		\| \tilde{\mathbb{P}}_{N, K} q \|_{p, K} \leq C 
		N^{3\left| \frac{1}{2} - \frac{1}{p} \right|} \|q\|_{p, K} 
		\qquad \forall q \in L^p(K), \ \forall p \in [1, \infty], \ 
		\forall K \in \mathcal{T}.
	\end{align*}
	
	\noindent \textbf{Step 2: Patch around singular vertex. } Let 
	$\vertex{a} \in 
	\mathcal{V}_{S}$ and $\mathcal{U} = \mathcal{T}_{\vertex{a}}$. Let
	\begin{align*}
		b_{\vertex{a}} := \sum_{j=1}^{|\mathcal{T}_{\vertex{a}}|} 
		\frac{(-1)^{j+k}}{|K_j|} P_{N}^{(0, 2)}(1-2\lambda_{K_j, 
			\vertex{a}}) \mathds{1}_{K_j},
	\end{align*}
	where $P_{N}^{(0, 2)} : [-1, 1] \to \mathbb{R}$ is usual $(0, 
	2)$-Jacobi polynomial \cite{Szego75}, normalized as in 	
	\cref{eq:jacobi-normalization} below, $\lambda_{K_j, \vertex{a}}$ is 
	the barycentric coordinate associated to $\vertex{a}$ on $K_j \in 
	\mathcal{T}_{\vertex{a}}$, and $\mathds{1}_{K_j}$ is the indicator 
	function of $K_j$. Thanks to \cite[Lemma 6, 3.]{Grable24}, we have 
	the following orthogonal decomposition:
	\begin{align*}
		DG^N(\mathcal{U}) = \widetilde{DG}_{\Gamma}^{N}(\mathcal{U}) 
		\oplus^{\perp} \spann \left\{ b_{\vertex{a}} \right\},
	\end{align*}
	and so for $q \in L^1(\omega_{\mathcal{U}})$, there holds
	\begin{align*}
		\tilde{\mathbb{P}}_{N, \mathcal{U}} q = \mathbb{P}_{N, 
			\mathcal{U}} q 
		- \frac{(b_{\vertex{a}}, q)_{\omega_{\mathcal{U}}} }{ 
			\|b_{\vertex{a}}\|_{\omega_{\mathcal{U}}}^2 } b_{\vertex{a}}, 
	\end{align*}
	where $\mathbb{P}_{N, \mathcal{U}} :L^1(\omega_{\mathcal{U}}) \to 
	DG^N(\mathcal{U})$ is the $L^2$-projection operator. Consequently,
	\begin{align*}
		\| \tilde{\mathbb{P}}_{N, \mathcal{U}} q \|_{p, 
			\omega_{\mathcal{U}}} \leq \|  \mathbb{P}_{N, \mathcal{U}} q 
		\|_{p, \omega_{\mathcal{U}}} + \frac{ \|b_{\vertex{a}} \|_{p, 
				\omega_{\mathcal{U}}} \|b_{\vertex{a}}\|_{p', 
				\omega_{\mathcal{U}}} }{ \| b_{\vertex{a}}\|_{2, 
				\omega_{\mathcal{U}}}^2 } \|q\|_{p, \omega_{\mathcal{U}}}. 
	\end{align*}
	
	\noindent \textbf{Part (a): $L^1$, $L^2$, and $L^{\infty}$ bounds for 
		$b_{\vertex{a}}$. } We first have 
	that
	\begin{align*}
		\|b_{\vertex{a}}\|_{1, \omega_{\mathcal{U}}} = 
		\sum_{j=1}^{|\mathcal{T}_{\vertex{a}}|} \frac{1}{|K_j|} 
		\int_{K_j} 
		|P_N^{(0, 2)}(1 - 2\lambda_{K_j, \vertex{a}})| \d{\bdd{x}} &= 
		|\mathcal{T}_{\vertex{a}}|  \int_{-1}^{1} (1+t) 
		|P_N^{(0, 2)}(t)| \d{t} \\
		&\leq C N^{-\frac{1}{2}}
	\end{align*}
	since applying \cite[Theorem 7.34]{Szego75} gives
	\begin{align*}
		\int_{-1}^{1} (1+t) |P_N^{(0, 2)}(t)| \d{t} \leq 2 \int_{0}^{1} 
		|P_N^{(0, 2)}(t)| \d{t} + \int_{0}^{1} (1-t) 
		|P_N^{(2, 0)}(t)| \leq C N^{-\frac{1}{2}}.
	\end{align*}
	Additionally, \cite[Lemma 6, 1. \& 2.]{Grable24} show that
	\begin{align*}
		\|b_{\vertex{a}}\|_{2, \omega_{\mathcal{U}}}^2 = 
		\sum_{j=1}^{|\mathcal{T}_{\vertex{a}}|} \frac{1}{|K_j|} 
		\int_{K_j}   \|b_{\vertex{a}}\|_{2, K_j}^2 = 
		\sum_{i=1}^{|\mathcal{T}_{\vertex{a}}|} \frac{1}{|K_j} \leq C 
		\mathrm{diam}(\omega_{\vertex{a}})^{-2}.
	\end{align*}
	An $L^{\infty}$ bound follows immediately from \cite[Theorem 
	7.32.1]{Szego75}, as
	\begin{align*}
		\|b_{\vertex{a}}\|_{\infty, \omega_{\mathcal{U}}} \leq C 
		\mathrm{diam}(\omega_{\vertex{a}})^{-2} \|P_N^{(0, 2)}\|_{\infty, 
			(-1, 1)} \leq C \mathrm{diam}(\omega_{\vertex{a}})^{-2} N^{2}.
	\end{align*}
	
	\noindent \textbf{Part (b): $L^p$ bound for the $L^2$ projection. } 
	Collecting 
	results, we have
	\begin{alignat*}{2}
		\| \tilde{\mathbb{P}}_{N, K} q \|_{1, \omega_{\mathcal{U}}} &\leq 
		C N^{\frac{3}{2}} \|q\|_{1, \omega_{\mathcal{U}}} \qquad & 
		&\forall q \in L^1(\omega_{\mathcal{U}}), \\
		\| \tilde{\mathbb{P}}_{N, K} q \|_{\infty, 
			\omega_{\mathcal{U}}} &\leq C N^{\frac{3}{2}} \|q\|_{\infty, 
			\omega_{\mathcal{U}}} \qquad & &\forall q \in 
		L^{\infty}(\omega_{\mathcal{U}}).
	\end{alignat*}
	Inequality \cref{eq:l2-proj-lp-stability-local} now follows from 
	interpolation with $p=2$.

	\noindent \textbf{Step 3: \cref{eq:l2-proj-lp-stability-dg}. } If the 
	mesh satisfies \ref{cond:mesh-disjoint-sing}, then 
	\cref{lem:l2-projection-local} gives
	\begin{align*}
		\| \tilde{\mathbb{P}}_{N, \mathcal{T}} q \|_{p, \Omega}^p = 
		\sum_{\vertex{a} \in \mathcal{V}_S}  \| \tilde{\mathbb{P}}_{N, 
			\mathcal{T}_{\vertex{a}}} q \|_{p, \omega_{\vertex{a}}}^p + 
		\sum_{\substack{K \in \mathcal{T} \\ \mathcal{V}_K \cap 
				\mathcal{V}_S = \emptyset }} \| \tilde{\mathbb{P}}_{N, K} q 
		\|_{p, K}^p,
	\end{align*}
	and so \cref{eq:l2-proj-lp-stability-dg} follows from 
	\cref{eq:l2-proj-lp-stability-local}.
\end{proof}

\subsection{Proof of \cref{thm:inf-sup-stability}}
\label{sec:proof-inf-sup}
If $|\Gamma_{1}| \neq 0$, then $Q_{\Gamma}^{N} =  
\widetilde{DG}_{\Gamma}^N(\mathcal{T})$ and so 
\cref{eq:lp-norm-l2-projection} is a restatement of 
\cref{eq:l2-proj-lp-stability-dg}. Now suppose that $|\Gamma_1| = 0$ 
so $Q_{\Gamma}^{N} = \widetilde{DG}_{\Gamma}^N(\mathcal{T}) \cap 
L^1_0(\Omega)$. Let $q \in L^p(\Omega)$ with $\bar{q} = |\Omega|^{-1} 
\int_{\Omega} q \d{\bdd{x}}$. Then, condition 
\ref{cond:mesh-no-odd-sing} ensures that $\tilde{\mathbb{P}}_{N, 
	\mathcal{T}} q - \bar{q} \in Q_{\Gamma}^{N}$ and we have
\begin{align*}
	( \tilde{\mathbb{P}}_{N, \mathcal{T}} q - \bar{q},  r) = ( 
	\tilde{\mathbb{P}}_{N, \mathcal{T}} q,  r) = (q, r) \qquad 
	\forall r \in Q_{\Gamma}^{N}.
\end{align*}
Consequently, $\mathbb{P}_{N}^{Q} q = \tilde{\mathbb{P}}_{N, 
	\mathcal{T}} q - \bar{q}$, and so 
\cref{eq:lp-norm-l2-projection} follows from 
\cref{eq:l2-proj-lp-stability-dg} and H\"{o}lder's inequality.

Inequality \cref{eq:inf-sup-stability} follows immediately from 
\cref{eq:lp-norm-l2-projection,eq:inf-sup-by-discrete-norm,%
	eq:discrete-norm-by-projection}.
\hfill \qedsymbol

\section{Constructing local Fortin operators}
\label{sec:local-fortin}

We now modify the construction of $\mathcal{L}_{\dive, N}$ from 
\cref{thm:invert-div} to produce a local Fortin operator.

\subsection{Auxiliary interpolants}

We begin with a particular set of degrees of freedom that induces an 
equivalent norm that will be useful in the foregoing analysis. To this end, 
given an edge $e \in \mathcal{E}$, let $\partial_e$ denote the tangential 
derivative along $e$ and let $\mathcal{V}_e$ denote the vertices of $e$.

\begin{lemma}
	\label{lem:cg-dofs}
	For $N \geq 0$, let $\bdd{G}^N_0(K) := \{ \bdd{z} \in \mathcal{P}_N(K)^2 : 
	\bdd{z}|_{\partial K} = \bdd{0} \text{ and }  \dive \bdd{z} \equiv 0  \}$. 
	For $N \geq 4$, the following degrees of freedom are unisolvent for 
	$\bdd{v} \in \bdd{V}^N$:
	\begin{subequations}
		\label{eq:dofs-v-alt}
		\begin{alignat}{3}
			\label{eq:dofs-v-alt-1}
			&\bdd{v}(\vertex{a})  \qquad & & & &\forall \vertex{a} \in 
			\mathcal{V}, \\
			\label{eq:dofs-v-alt-2}
			&\partial_e \bdd{v}|_{e}(\vertex{a}) \qquad & &\forall \bdd{a} \in 
			\mathcal{V}_{e}, \ & &\forall e \in \mathcal{E}, \\
			\label{eq:dofs-v-alt-3}
			&\int_{e} \bdd{v} \cdot \bdd{r}  \d{s} \qquad & &\forall \bdd{r} 
			\in \mathcal{P}_{N-4}(e)^2, \ & &\forall e \in \mathcal{E}, \\
			\label{eq:dofs-v-alt-4}
			&\int_{K} \nabla \bdd{v} :  \nabla \bdd{z} \d{\bdd{x}} \qquad & 
			&\forall \bdd{z} \in \bdd{G}_{0}^N(K), \ & &\forall K \in 
			\mathcal{T}, \\
			\label{eq:dofs-v-alt-5}
			&\int_{K} (\dive \bdd{v}) q  \d{\bdd{x}} \qquad & &\forall q \in 
			Q_0^{N-1}(K), \ & &\forall K \in \mathcal{T}.
		\end{alignat}
	\end{subequations}
	Consequently, for all $K \in \mathcal{T}$, $\bdd{v} \in 
	\mathcal{P}_{N}(K)^2$, and $p \in [1,\infty]$, the norm
	\begin{align*}
		\begin{aligned}
			&\sum_{ \vertex{a} \in \mathcal{V}_K  }   \left( 
			|\bdd{v}(\vertex{a})| + h_K \sum_{e \in \mathcal{E}_{\vertex{a}} 
				\cap \mathcal{E}_K } |\partial_e \bdd{v}(\vertex{a})| \right)  + 
			h_K^{-\frac{1}{p}} \sum_{e \in \mathcal{E}_K} \sup_{ \substack{ 
					\bdd{r} \in \mathcal{P}_{N-4}(e)^2 \\ \|\bdd{r}\|_{p', e} = 1 
					} } 
			\int_{e}  \left| \bdd{v} \cdot \bdd{r}  \right| \d{s} \\
			&\quad \qquad + h_K^{1-\frac{2}{p}} \sup_{\substack{ \bdd{z} \in 
					\bdd{G}_0^{N}(K) \\ \| \nabla \bdd{z}\|_{p', K} = 1 } }  
					\int_{K} 
			\left| \nabla \bdd{v} : \nabla \bdd{z} \right| \d{\bdd{x}}  
			+ h_K^{1-\frac{2}{p}} \sup_{\substack{ q \in Q_0^{N-1}(K) \\ 
					\|q\|_{p',K} = 1 } }  \int_{K} \left| (\dive \bdd{v})q \right| 
			\d{\bdd{x}}. 
		\end{aligned}
	\end{align*}
	is equivalent to $h_K^{-\frac{2}{p}} \| \bdd{v} \|_{p, K} + 
	h_K^{1-\frac{2}{p}} 
	|\bdd{v}|_{1,p,K}$, where the equivalence constants depend only on 
	$p$, 
	$N$, 
	and shape regularity. 
	Additionally, for all $K \in \mathcal{T}$, $q \in \mathcal{P}_{N}(K)$, and 
	$p \in [1,\infty]$ the norm
	\begin{align*}
		\sum_{ \vertex{a} \in \mathcal{V}_K } | q(\vertex{a})| + h_K^{-2} 
		\left| \int_{K} q \d{\bdd{x}} \right|  + h_K^{-\frac{2}{p}} 
		\sup_{\substack{ r \in Q_0^{N-1}(K) \\ \|r\|_{p',K} = 1 } }   \left| 
		\int_{K} q r  \d{\bdd{x}}\right|
	\end{align*}
	is equivalent to $h_K^{-\frac{2}{p}} \| q \|_{p, K}$, where the 
	equivalence constants again depend only on $p$, $N$, and shape regularity. 
\end{lemma}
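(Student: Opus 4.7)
The plan is to prove the lemma in three stages: a dimension count showing that the DOFs \cref{eq:dofs-v-alt} have the correct cardinality, a kernel argument establishing unisolvence, and a finite-dimensional norm-equivalence together with an affine change of variables to recover the stated scalings in $h_K$.

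\textbf{Step 1 (Dimension count).} On a triangulation with $V$ vertices, $E$ edges, and $F$ elements, one has $\dim \bdd{V}^N = 2V + 2(N-1)E + (N-1)(N-2)F$. The DOFs \cref{eq:dofs-v-alt-1,eq:dofs-v-alt-2,eq:dofs-v-alt-3,eq:dofs-v-alt-4,eq:dofs-v-alt-5} contribute respectively $2V$, $4E$, $2(N-3)E$, $\tfrac{1}{2}(N-3)(N-4)F$, and $(\tfrac{1}{2}N(N+1)-4)F$, and a short arithmetic check confirms that these sum to the required total.

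\textbf{Step 2 (Kernel argument).} Suppose every DOF vanishes. On each edge $e$ with endpoints $\vertex{a}_1,\vertex{a}_2$, conditions \cref{eq:dofs-v-alt-1,eq:dofs-v-alt-2} force $\bdd{v}|_e \in \mathcal{P}_N(e)^2$ to vanish to second order at both endpoints, so it factors as $(\phi_{e,1}\phi_{e,2})^2 \bdd{p}$ with $\bdd{p} \in \mathcal{P}_{N-4}(e)^2$ and $\phi_{e,i}$ the affine function on $e$ vanishing at $\vertex{a}_i$. Testing \cref{eq:dofs-v-alt-3} with $\bdd{r} = \bdd{p}$ then forces $\bdd{p} \equiv \bdd{0}$, so $\bdd{v}|_{\partial K} = \bdd{0}$ for every $K \in \mathcal{T}$, and hence $\bdd{v}|_K \in \bdd{V}_0^N(K)$. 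Since $N \geq 4$, the Scott--Vogelius identity $\dive \bdd{V}_0^N(K) = Q_0^{N-1}(K)$ on a single triangle \cite{ScottVog85} applies, so testing \cref{eq:dofs-v-alt-5} with $q = \dive \bdd{v}|_K \in Q_0^{N-1}(K)$ yields $\dive \bdd{v}|_K \equiv 0$, placing $\bdd{v}|_K$ in $\bdd{G}_0^N(K)$. Finally, taking $\bdd{z} = \bdd{v}|_K$ in \cref{eq:dofs-v-alt-4} gives $|\bdd{v}|_{1,2,K}^2 = 0$, which combined with the vanishing trace forces $\bdd{v}|_K \equiv \bdd{0}$.

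\textbf{Step 3 (Norm equivalence by scaling).} By unisolvence, the sum of absolute values of the DOFs (with the three suprema interpreted as the dual norms induced by the DOF functionals on $\mathcal{P}_{N-4}(e)^2$, $\bdd{G}_0^N(K)$, and $Q_0^{N-1}(K)$) defines a norm on $\mathcal{P}_N(\reftri)^2$. Since any two norms on a finite-dimensional vector space are equivalent, this quantity is comparable on $\reftri$ to $\|\cdot\|_{p,\reftri} + |\cdot|_{1,p,\reftri}$ with constants depending only on $p$ and $N$. Pulling back via the affine map $\bdd{F}_K : \reftri \to K$ scales the volume like $h_K^2$, so $\|\cdot\|_{p,K}$ scales as $h_K^{2/p}$ and $|\cdot|_{1,p,K}$ as $h_K^{2/p-1}$; the matching $h_K$-weights on the edge, gradient, and divergence pairings then follow by the same change of variables, yielding exactly the factors in the display, with shape regularity absorbing any remaining dependence on the particular triangle. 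The scalar equivalence is proved by the same template applied to $q \in \mathcal{P}_{N-1}(K)$: the vertex values, the mean value, and the pairings against $Q_0^{N-1}(K)$ form a unisolvent set on $\mathcal{P}_{N-1}(K)$ (their count $3 + 1 + \dim Q_0^{N-1}(K) = \tfrac{1}{2}N(N+1)$ matches $\dim \mathcal{P}_{N-1}(K)$), and the affine-scaling step is identical.

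The principal obstacle is the kernel argument in Step 2, and within it the invocation of the Scott--Vogelius identity $\dive \bdd{V}_0^N(K) = Q_0^{N-1}(K)$; once that single-element result is in hand, the remainder is finite-dimensional norm equivalence combined with standard bookkeeping on the affine map $\bdd{F}_K$.
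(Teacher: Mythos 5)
Your proof is correct and follows essentially the same route as the paper's: a degree-of-freedom count, a kernel argument that successively kills the trace via \cref{eq:dofs-v-alt-1,eq:dofs-v-alt-2,eq:dofs-v-alt-3}, the divergence via \cref{eq:dofs-v-alt-5}, and finally the gradient via \cref{eq:dofs-v-alt-4}, followed by finite-dimensional norm equivalence on the reference element and affine scaling. The only details you elide that the paper makes explicit are the use of the Piola transform to carry $\bdd{G}_0^N(\reftri)$ and $Q_0^{N-1}(\reftri)$ onto their physical-element counterparts in the scaling step (plain composition with $\bdd{F}_K^{-1}$ does not preserve the divergence-free constraint), and your reading of the scalar equivalence on $\mathcal{P}_{N-1}(K)$ rather than $\mathcal{P}_N(K)$ is indeed the correct one, as your dimension count shows.
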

\begin{proof}
	The degrees of freedom in \cref{eq:dofs-v-alt} are linearly independent 
	and thanks to \cite[Theorem 3.3]{AinCP19StokesIII}, $\dim \bdd{G}_0^{N}(K) 
	= N^2/2 - 7N/2  + 6$, and so they total $\dim \bdd{V}^N$. Now suppose that 
	$\bdd{v} \in \bdd{V}^N$ and the degrees of freedom in \cref{eq:dofs-v-alt} 
	vanish. Let $K \in \mathcal{T}$. Since 
	\cref{eq:dofs-v-alt-1,eq:dofs-v-alt-2,eq:dofs-v-alt-3} vanish, 
	$\bdd{v}|_{K} \in \bdd{V}^N_0(K)$, while \cref{eq:dofs-v-alt-5} then gives 
	$\dive \bdd{v}|_{K} \equiv 0$. \Cref{eq:dofs-v-alt-4} then shows that 
	$\bdd{v}|_{K} = \bdd{0}$, and so the degrees of freedom 
	\cref{eq:dofs-v-alt} are unisolvent.
	
	The norm equivalences follow from standard scaling arguments. For brevity, 
	we only sketch the proof of the norm equivalence on $\mathcal{P}_N(K)^2$. 
	Let $\bdd{v} \in 
	\mathcal{P}_{N}(K)^2$ and define $\hat{\bdd{v}} := \bdd{v} \circ 
	\bdd{F}_K$. An equivalence of norms argument shows that
	\begin{align*}
		&\sum_{ \vertex{a} \in \mathcal{V}_{\hat{T}}  }   \left( 
		|\hat{\bdd{v}}(\vertex{a})| + \sum_{e \in \mathcal{E}_{\vertex{a}}} 
		|\partial_e \hat{\bdd{v}}(\vertex{a})| \right)  + \sum_{e \in 
			\mathcal{E}_{\hat{T}}} \sup_{ \substack{ \hat{\bdd{r}} \in 
				\mathcal{P}_{N-4}(e)^2 \\ \| \hat{\bdd{r}} \|_{p', e} = 1 } } 
		\int_{e}  \left| \hat{\bdd{v}} \cdot \hat{\bdd{r}}  \right| \d{s} \\
		&\qquad + \sup_{\substack{ \hat{\bdd{z}} \in \bdd{G}_0^{N}(\hat{T}) \\ 
				\| \nabla \hat{\bdd{z}} \|_{p', \hat{T}} = 1 } }  \int_{\hat{T}} 
		\left| \nabla \hat{\bdd{v}} : \nabla \hat{\bdd{z}} \right| 
		\d{\bdd{x}}  + \sup_{\substack{ \hat{q} \in Q_0^{N-1}(\hat{T}) \\ 
				\|\hat{q}\|_{p', \hat{T}} = 1 } }  \int_{\hat{T}} \left| (\dive 
		\hat{\bdd{v}})\hat{q} \right| \d{\bdd{x}} 
	\end{align*}
	is equivalent to $\| \hat{\bdd{v}} \|_{1, p, \hat{T}}$, where the 
	equivalence constants depend only on $p$ and $N$. Then, we obtain
	\begin{align*}
		\sup_{\substack{ \hat{\bdd{z}} \in \bdd{G}_0^{N}(\hat{T}) \\ \| \nabla 
				\hat{\bdd{z}}\|_{p', \hat{T}} = 1 } }  \int_{\hat{T}} \left| 
				\nabla 
		\hat{\bdd{v}} : \nabla \hat{\bdd{z}} \right| \d{\bdd{x}} 
		&= \sup_{\hat{\bdd{z}} \in \bdd{G}_0^{N}(\hat{T}) }  \frac{ 
			\int_{\hat{T}} \left| \nabla \hat{\bdd{v}} : \nabla \hat{\bdd{z}} 
			\right| \d{\bdd{x}} }{ \| \nabla \hat{\bdd{z}}\|_{p', \hat{T}} } \\
		&\leq C(p) \sup_{\hat{\bdd{z}} \in \bdd{G}_0^{N}(\hat{T}) }  \frac{ 
			\int_{\hat{T}} \left| \nabla \hat{\bdd{v}} : \nabla (\nabla \bdd{F}_K )
			\hat{\bdd{z}}\right| \d{\bdd{x}} }{ \| \nabla  D\bdd{F}_K 
			\hat{\bdd{z}}\|_{p', \hat{T}} } \\
		&\leq C(p) h_K^{\frac{2}{p'}-1} \sup_{\hat{\bdd{z}} \in 
			\bdd{G}_0^{N}(\hat{T}) }  \frac{ \int_{K} \left| \nabla \bdd{v} : 
			\nabla (\nabla \bdd{F}_K \hat{\bdd{z}} \circ \bdd{F}_K^{-1} )\right| 
			\d{\bdd{x}} }{ \| \nabla  (D\bdd{F}_K \hat{\bdd{z}} \circ 
			\bdd{F}_K^{-1} )\|_{p', K} } \\
		&\leq C(p)  h_K^{1-\frac{2}{p}}  \sup_{\substack{ \bdd{z} \in 
				\bdd{G}_0^{N}(K) \\ \| \nabla \bdd{z}\|_{p', K} = 1 } }  \int_{K} 
		\left| \nabla \bdd{v} : \nabla \bdd{z} \right| \d{\bdd{x}},
	\end{align*}
	where we used that the Piola transform of $\hat{\bdd{z}}$ satisfies 
	$\nabla \bdd{F}_K \hat{\bdd{z}} \circ \bdd{F}_K^{-1} \in \bdd{G}^N_0(K)$. 
	A similar use of the Piola transform shows that
	\begin{align*}
		\sup_{\substack{ \hat{q} \in Q_0^{N-1}(\hat{T}) \\ \|\hat{q}\|_{p', 
					\hat{T}} = 1 } }  \int_{\hat{T}} \left| (\dive 
					\hat{\bdd{v}})\hat{q} 
		\right| \d{\bdd{x}} \leq C(p) h_K^{1-\frac{2}{p}} \sup_{\substack{ q 
				\in Q_0^{N-1}(K) \\ \|q\|_{p',K} = 1 } }  \int_{K} \left| (\dive 
		\bdd{v})q \right| \d{\bdd{x}}.
	\end{align*}
	The other terms may be treated with an analogous scaling argument to show 
	that ``$\leq$" direction of norm equivalence. The ``$\geq$" direction is 
	similar.
\end{proof}

The next result concerns the existence and stability of a Scott-Zhang 
\cite{Scott90} operator that utilizes the degrees of freedom in 
\cref{lem:cg-dofs}.
\begin{lemma}
	Let $N \geq 4$. There exists a linear projection operator  
	$\mathcal{I}_N : 
	\bdd{W}^{1,1}(\Omega) \to \bdd{V}^N$ satisfying the following for $\bdd{v} 
	\in \bdd{W}^{1,1}(\Omega)$:
	\begin{subequations}
		\label{eq:degree-N-coarse-interp}
		\begin{alignat}{2}
			\label{eq:degree-N-coarse-interp-1}	
			\int_{e} (\bdd{v} - \mathcal{I}_N \bdd{v}) \cdot \bdd{r} \d{s} &= 
			0 \qquad & &\forall \bdd{r} \in \mathcal{P}_{N-4}(e)^2, \ \forall 
			e \in \mathcal{E}, \\
			\label{eq:degree-N-coarse-interp-2}
			\int_{K} \dive (\bdd{v} - \mathcal{I}_N \bdd{v}) q \d{\bdd{x}} &= 
			0 \qquad & &\forall q \in Q_0^{N-1}(K), \ \forall K \in 
			\mathcal{T}, \\
			\label{eq:degree-N-coarse-interp-zero-trace}
			\mathcal{I}_N \bdd{v}|_{\Gamma_0} &= \bdd{0} \qquad & &\text{if } 
			\bdd{v} \in \bdd{W}^{1,1}_{\Gamma}(\Omega).
		\end{alignat}	
	\end{subequations}
	Moreover, for all $p \in [1, \infty]$ and $\bdd{v} \in 
	\bdd{W}^{1,p}(\Omega)$, there holds
	\begin{subequations}
		\begin{alignat}{2}
			\label{eq:degree-N-coarse-interp-lp-stable}
			\| \mathcal{I}_N \bdd{v} \|_{p, K} &\leq C(N, p) \left( \|  
			\bdd{v} \|_{p, \omega_K} + h_K |  \bdd{v} |_{1, p, \omega_K} 
			\right) \qquad & &\forall K \in \mathcal{T}, \\
			\label{eq:degree-N-coarse-interp-w1p-stable}
			| \mathcal{I}_N \bdd{v} |_{1, p, K} &\leq C(N, p)  |  \bdd{v} 
			|_{1, p, \omega_K}  \qquad & &\forall K \in \mathcal{T}, \\
			\label{eq:coarse-interp-div-stab}
			\| \dive   \mathcal{I}_N \bdd{v} \|_{p, K} &\leq C(N, p) | \bdd{v} 
			|_{1, p, K} \qquad & &\forall K \in \mathcal{T},
		\end{alignat}
	\end{subequations}
	where $C(N, p)$ also depends on $\Omega$, $\Gamma_0$, and shape regularity.
\end{lemma}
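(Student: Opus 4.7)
The plan is to build $\mathcal{I}_N$ as a Scott--Zhang style projection based on the degree-of-freedom set from \cref{lem:cg-dofs}. For each degree of freedom $L_j$ in \cref{eq:dofs-v-alt}, I would define a dual functional $\tilde L_j \colon \bdd{W}^{1,1}(\Omega) \to \mathbb{R}$ that agrees with $L_j$ on $\bdd{V}^N$. The interior moments \cref{eq:dofs-v-alt-4,eq:dofs-v-alt-5} are already continuous on $\bdd{W}^{1,1}(K)$, so they are used as is; this immediately forces the divergence-moment identity \cref{eq:degree-N-coarse-interp-2}. The edge moments \cref{eq:dofs-v-alt-3} are continuous on $\bdd{W}^{1,1}(\Omega)$ via the trace theorem, giving \cref{eq:degree-N-coarse-interp-1}. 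For the vertex-value and tangent-derivative degrees of freedom \cref{eq:dofs-v-alt-1,eq:dofs-v-alt-2}, I would pick a representative edge $e_\star$ --- choosing $e_\star = e$ for the tangent-derivative along $e$ at $\vertex{a}$, and any edge at $\vertex{a}$ for the vertex-value --- subject to $e_\star \subset \Gamma_0$ whenever the vertex or edge meets $\bar \Gamma_0$. Since $\bdd{V}^N|_{e_\star} = \mathcal{P}_N(e_\star)^2$, finite-dimensional duality provides $\bdd{\xi}_j \in \mathcal{P}_N(e_\star)^2$ with $L_j(\bdd{w}) = \int_{e_\star} \bdd{w} \cdot \bdd{\xi}_j \d{s}$ for all $\bdd{w} \in \mathcal{P}_N(e_\star)^2$; I would set $\tilde L_j(\bdd{v}) := \int_{e_\star} \bdd{v} \cdot \bdd{\xi}_j \d{s}$, which agrees with $L_j$ on $\bdd{V}^N$ and is continuous on $\bdd{W}^{1,1}(\Omega)$ by trace. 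The operator $\mathcal{I}_N \bdd{v}$ is then the element of $\bdd{V}^N$ whose degrees of freedom equal $\tilde L_j(\bdd{v})$.

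The projection and moment conditions \cref{eq:degree-N-coarse-interp-1,eq:degree-N-coarse-interp-2} hold by construction, and the trace condition \cref{eq:degree-N-coarse-interp-zero-trace} follows because when $\bdd{v}|_{\Gamma_0} = \bdd{0}$, every edge-based functional whose $e_\star$ lies in $\Gamma_0$ vanishes, killing exactly the degrees of freedom that determine $\mathcal{I}_N \bdd{v}|_{\Gamma_0}$. For the local bounds \cref{eq:degree-N-coarse-interp-lp-stable,eq:degree-N-coarse-interp-w1p-stable}, I would apply the first norm equivalence of \cref{lem:cg-dofs} to $\mathcal{I}_N \bdd{v}|_K \in \mathcal{P}_N(K)^2$, reducing the task to bounding each $|\tilde L_j(\bdd{v})|$ with the prescribed $h_K$-scaling; H\"older's inequality and the trace theorem control each contribution by $\|\bdd{v}\|_{p,\omega_K} + h_K |\bdd{v}|_{1,p,\omega_K}$. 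For the seminorm estimate \cref{eq:degree-N-coarse-interp-w1p-stable} the usual Scott--Zhang trick of subtracting the $\omega_K$-average of $\bdd{v}$ and invoking Poincar\'e removes the $\|\bdd{v}\|_{p,\omega_K}$ term, exploiting that $\mathcal{I}_N$ preserves constants (which lie in $\bdd{V}^N$).

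The sharper divergence bound \cref{eq:coarse-interp-div-stab} is the main technical hurdle. Applying the second norm equivalence of \cref{lem:cg-dofs} to $q := \dive \mathcal{I}_N \bdd{v} \in \mathcal{P}_{N-1}(K) \subset \mathcal{P}_N(K)$ splits $\|q\|_{p,K}$ into vertex-value, mean, and $Q_0^{N-1}(K)$-moment contributions. The $Q_0^{N-1}(K)$-moment is bounded by $|\bdd{v}|_{1,p,K}$ directly from \cref{eq:degree-N-coarse-interp-2} and H\"older's inequality. For the mean, the crucial observation is that since $N \geq 4$, constant vectors lie in $\mathcal{P}_{N-4}(e)^2$, so \cref{eq:degree-N-coarse-interp-1} with $\bdd{r} = \bdd{n}$ combined with the divergence theorem yields $\int_K \dive \mathcal{I}_N \bdd{v} = \int_K \dive \bdd{v}$, which is controlled by $|\bdd{v}|_{1,p,K}$. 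The vertex values $\dive \mathcal{I}_N \bdd{v}(\vertex{a})$ for $\vertex{a} \in \mathcal{V}_K$ are, by the duality of the basis to \cref{eq:dofs-v-alt}, determined purely by the tangent-derivative degrees of freedom at $\vertex{a}$ along the two edges of $K$ meeting there --- vertex-value, edge-moment, and interior basis functions all have vanishing gradient at every vertex since their associated tangent-derivative degrees of freedom all vanish. With $e_\star = e$ each such contribution is a linear functional of $\bdd{v}|_e$ that annihilates constant vectors, so subtracting the $K$-average of $\bdd{v}$ and combining Poincar\'e's inequality with the trace theorem and an inverse inequality on $e$ yields a bound of the form $h_K^{-2/p} |\bdd{v}|_{1,p,K}$, delivering \cref{eq:coarse-interp-div-stab} after the appropriate scaling factor from the norm equivalence.
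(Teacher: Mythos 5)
Your construction is essentially the paper's: a Scott--Zhang-type quasi-interpolant built on the degrees of freedom of \cref{lem:cg-dofs}, with the interior and edge-moment functionals taken verbatim, the vertex and tangent-derivative functionals regularized through edge integrals (the paper realizes your dual representers concretely as $L^2(e)$-projections $\bdd{\Pi}_e$ onto $\mathcal{P}_N(e)^2$), stability via the norm equivalences plus trace/H\"older, the seminorm bound via preservation of constants and Poincar\'e, and the divergence bound via exactly your three ingredients (matched $Q_0^{N-1}(K)$-moments, preserved element means since constants lie in $\mathcal{P}_{N-4}(e)^2$ for $N\ge 4$, and vertex gradients controlled by constant-annihilating tangent-derivative functionals). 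The argument is correct and matches the paper's proof in all essentials.
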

\begin{proof}
	For each $\vertex{a} \in \mathcal{V}$, let $e_{\bdd{a}} \in 
	\mathcal{E}_{\vertex{a}}$ be chosen so that $e_{\bdd{a}} \subset 
	\Gamma_0$  if $\vertex{a} \in \mathcal{V}_{B, 0}$ and let $\bdd{\Pi}_{e} : 
	\bdd{L}^{1}(e) \to \mathcal{P}_N(e)^2$ denote the $L^2(e)$ projection 
	operator
	\begin{align*}
		( \bdd{\Pi}_{e} \bdd{w}, \bdd{r} )_{e} = (\bdd{w}, \bdd{r} )_{e} 
		\qquad \forall \bdd{r} \in \mathcal{P}_N(e)^2, \ \forall \bdd{w} \in 
		\bdd{L}^{1}(e).
	\end{align*}
	Similar arguments to those in the proof of 
	\cref{lem:l2-proj-lp-stability-local} show that for all $p \in 
	[1,\infty]$, there holds
	\begin{align}
		\label{eq:proof:l2-proj-edge-stable}
		\| \bdd{\Pi}_{e} \bdd{w} \|_{p, e} \leq C(N, p) \| \bdd{w} \|_{p, e} 
		\qquad \forall \bdd{w} \in \bdd{L}^p(e).  
	\end{align}
	
	Let $\bdd{v} \in \bdd{W}^{1,1}(\Omega)$ be given and define 
	$\mathcal{I}_{N} \bdd{v} \in \bdd{V}^N$ by assigning the degrees of 
	freedom in \cref{eq:dofs-v-alt} as follows:
	\begin{subequations}
		\begin{alignat}{3}
			\mathcal{I}_{N} \bdd{v}(\vertex{a}) &= (\bdd{\Pi}_{e_{\vertex{a}}} 
			\bdd{v})(\bdd{a})  \qquad & & & &\forall \vertex{a} \in 
			\mathcal{V}, \\
			\partial_e \mathcal{I}_{N} \bdd{v}|_{e}(\vertex{a}) &= \partial_e 
			(\bdd{\Pi}_{e} \bdd{v}|_{e} )(\vertex{a}) \qquad & &\forall 
			\bdd{a} \in \mathcal{V}_{e}, \ & &\forall e \in \mathcal{E}, \\
			\int_{e} \mathcal{I}_{N} \bdd{v} \cdot \bdd{r}  \d{s} &= \int_{e} 
			\bdd{v} \cdot \bdd{r}  \d{s}  \qquad & &\forall \bdd{r} \in 
			\mathcal{P}_{N-4}(e)^2, \ & &\forall e \in \mathcal{E}, \\
			\int_{K} \nabla \mathcal{I}_{N} \bdd{v} :  \nabla \bdd{z} 
			\d{\bdd{x}} &= \int_{K} \nabla \bdd{v} :  \nabla \bdd{z} 
			\d{\bdd{x}} \qquad & &\forall \bdd{z} \in \bdd{G}_{0}^N(K), \ & 
			&\forall K \in \mathcal{T}, \\
			\int_{K} (\dive \mathcal{I}_{N} \bdd{v}) q  \d{\bdd{x}} &= 
			\int_{K} (\dive \bdd{v}) q  \d{\bdd{x}} \qquad & &\forall q \in 
			Q_0^{N-1}(K), \ & &\forall K \in \mathcal{T}.
		\end{alignat}
	\end{subequations}
	Properties \cref{eq:degree-N-coarse-interp-1,eq:degree-N-coarse-interp-2} 
	follow immediately by construction. If $\bdd{v} \in 
	\bdd{W}^{1,1}_{\Gamma}(\Omega)$, then $\bdd{\Pi}_{e_{\vertex{a}}} \bdd{v} 
	= \bdd{0}$ if $a \in \mathcal{V}_{B, 0}$ since $e_{\vertex{a}} \subset 
	\Gamma_0$. Moreover, $\bdd{\Pi}_e \bdd{v}|_{e} = \bdd{0}$ for $e \subset 
	\Gamma_0$, and so for every $e \in \mathcal{E}$ with $e \subset \Gamma_0$, 
	there holds
	\begin{align*}
		\partial_e^{j} \mathcal{I}_N \bdd{v}(\vertex{a}) = \bdd{0}  \quad 
		\forall \vertex{a} \in \mathcal{V}_e, \ j \in \{0, 1\}, \quad 
		\text{and} \quad \int_{e} \mathcal{I}_N \bdd{v} \cdot \bdd{r} \d{s} = 
		0 \quad \forall \bdd{r} \in \mathcal{P}_{N-4}(e)^2.
	\end{align*}
	Thus, $\mathcal{I}_N \bdd{v}|_{e} = \bdd{0}$, and 
	\cref{eq:degree-N-coarse-interp-zero-trace} follows.
	
	We now turn to 
	\cref{eq:degree-N-coarse-interp-lp-stable,eq:degree-N-coarse-interp-w1p-stable}.
	Standard inverse estimates, \cref{eq:proof:l2-proj-edge-stable}, and the 
	trace theorem give
	\begin{align*}
		&\sum_{ \vertex{a} \in \mathcal{V}_K  }   \left( 
		|(\bdd{\Pi}_{e_{\bdd{a}}} \bdd{v})(\vertex{a})| + h_K \sum_{e \in 
			\mathcal{E}_{\vertex{a}} \cap \mathcal{E}_K } |\partial_e 
		(\bdd{\Pi}_{e} \bdd{v})(\vertex{a})| \right) \\
		&\qquad \leq C(N, p) h_K^{-\frac{1}{p}} \sum_{ \vertex{a} \in 
			\mathcal{V}_K  } \left( \| \bdd{\Pi}_{e_{\vertex{a}}} \bdd{v} \|_{p, 
			e_{\vertex{a}}} + \sum_{e \in \mathcal{E}_{\vertex{a}} \cap 
			\mathcal{E}_K } \| \bdd{\Pi}_{e} \bdd{v} \|_{p, e} \right) \\
		&\qquad \leq C(N, p) h_K^{-\frac{1}{p}} \sum_{ \vertex{a} \in 
			\mathcal{V}_K  } \left( \|  \bdd{v} \|_{p, e_{\vertex{a}}} + \sum_{e 
			\in \mathcal{E}_{\vertex{a}} \cap \mathcal{E}_K } \| \bdd{v} \|_{p, e} 
		\right) \\
		&\qquad \leq C(N, p) \left( h_K^{-\frac{2}{p}} \| \bdd{v} \|_{p, 
			\omega_K} + h_K^{1-\frac{2}{p}} |\bdd{v}|_{1,p,\omega_{K}} \right)
	\end{align*}
	and $h_K^{-\frac{1}{p}} \| \bdd{v}\|_{p, \partial K} \leq C(p) 
	(h_K^{-\frac{2}{p}} \| \bdd{v} \|_{p, K} + h_K^{1-\frac{2}{p}} 
	|\bdd{v}|_{1,p,K})$. Thus, we obtain
	\begin{align*} 
		h_K^{-\frac{2}{p}} \|\mathcal{I}_{N} \bdd{v} \|_{p, K} + 
		h_K^{1-\frac{2}{p}} |\mathcal{I}_{N} \bdd{v} |_{1,p,K}  &\leq C(N, p) 
		\left(  h_K^{-\frac{2}{p}} \| \bdd{v} \|_{p, \omega_K} + 
		h_K^{1-\frac{2}{p}} |\bdd{v}|_{1,p,\omega_{K}} \right)
	\end{align*}
	thanks to \cref{lem:cg-dofs}. Inequality 
	\cref{eq:degree-N-coarse-interp-lp-stable} now follows. Since 
	$\mathcal{I}_N \bdd{c} = \bdd{c}$ for $\bdd{c} \in \mathbb{R}^2$, 
	inequality \cref{eq:degree-N-coarse-interp-w1p-stable} follows from 
	Poincar\'{e}'s inequality:
	\begin{align*}
		|\mathcal{I}_{N} \bdd{v} |_{1,p,K} = \newinf_{ \bdd{c} \in 
			\mathbb{R}^2}   |\mathcal{I}_{N} (\bdd{v} - \bdd{c}) |_{1,p,K} &\leq 
		C(N, p) \left( h_K^{-1} \inf_{\bdd{c} \in \mathbb{R}^2} \| \bdd{v} - 
		\bdd{c} \|_{p, \omega_K} + 
		|\bdd{v}|_{1,p,\omega_{K}} \right) \\
		&\leq C(N, p) |\bdd{v}|_{1,p,\omega_{K}}. 
	\end{align*}

	Finally, we turn to \cref{eq:coarse-interp-div-stab}. Note that
	\begin{align*}
		h_K \sum_{ \vertex{a} \in \mathcal{V}_K } |\nabla \mathcal{I}_{N} 
		\bdd{v}(\vertex{a})| 
		&\leq C 
		h_K \sum_{ \vertex{a} \in \mathcal{V}_K } \sum_{e \in 
			\mathcal{E}_{\vertex{a}}} |\partial_{e} \mathcal{I}_{N} 
		\bdd{v}(\vertex{a})| \\
		&\leq C(N, p) \left( 
		h_K^{-\frac{2}{p}} \| \bdd{v} \|_{p,K} + h_K^{1-\frac{2}{p}} 
		|\bdd{v}|_{1,p,K} \right),
	\end{align*}
	and a similar argument using Poincar\'{e}'s inequality gives
	\begin{align*}
		h_K \sum_{ \vertex{a} \in \mathcal{V}_K } |\nabla \mathcal{I}_{N} 
		\bdd{v}(\vertex{a})|  \leq C(N, p) h_K^{1-\frac{2}{p}} 
		|\bdd{v}|_{1,p,K}.
	\end{align*}
	Moreover, \cref{eq:degree-N-coarse-interp-1} and the divergence theorem 
	show that
	\begin{align*}
		\int_{K} \dive \mathcal{I}_{N} \bdd{v} \d{\bdd{x}} = \sum_{e \in 
			\mathcal{E}_K} \int_{e} \mathcal{I}_{N} \bdd{v} \cdot \unitvec{n} 
		\d{s} = \sum_{e \in \mathcal{E}_K} \int_{e} \bdd{v} \cdot \unitvec{n} 
		\d{s} =  \int_{K} \dive \bdd{v} \d{\bdd{x}}.
	\end{align*}
	Since $\dive \mathcal{I}_N \bdd{v}|_{K} \in \mathcal{P}_{N-1}(K)$, we 
	apply \cref{lem:cg-dofs,eq:degree-N-coarse-interp-2}  to obtain
	\begin{align*}
		&\| \dive \mathcal{I}_N \bdd{v} \|_{p, K} \\
		&\qquad \leq C(N, p) \left( h_K^{\frac{2}{p}}  \sum_{ \vertex{a} \in 
			\mathcal{V}_K } |\nabla \mathcal{I}_{N} \bdd{v}(\vertex{a})| + 
		h_K^{\frac{2}{p}-2 } \left| \int_{K} \dive \bdd{v} \d{\bdd{x}} \right| 
		+ \|\dive \bdd{v} \|_{p, K} \right) \\
		&\qquad \leq C(N, p) \left( |\bdd{v}|_{1,p,K} + \|\dive \bdd{v} \|_{p, K} 
		\right), 
	\end{align*}
	which completes the proof of \cref{eq:coarse-interp-div-stab}.
\end{proof}

\subsection{Proof of \cref{thm:fortin-nonprojection}}

Let $\bdd{v} \in \bdd{W}^{1,1}(\Omega)$ be given.

\noindent \textbf{Step 1: Interpolate the average divergence. } Let 
$q_0 \in 
DG^{N-1}(\mathcal{T})$ be defined by $q_0 := \tilde{\mathbb{P}}_{N-1} 
(\dive \bdd{v}) - \dive \mathcal{I}_{4} \bdd{v}$. Note that the mesh 
condition \ref{cond:mesh-no-sing} means that $\tilde{\mathbb{P}}_{N-1}$ is 
the $L^2$-projection onto $DG^{N-1}(\mathcal{T})$. Then, $q_0 \in Q^{N-1} 
\cap L^1_0(\Omega) \subseteq Q^{N-1}_{\Gamma}$ since
\begin{align}
	\label{eq:proof:fortin-step1-interp-avg-div}
	\int_{K} q_0 \d{\bdd{x}} = \int_{K} \left( \dive \bdd{v} - \dive 
	\mathcal{I}_{4} \bdd{v} \right) \d{\bdd{x}} = \int_{\partial K} \left( 
	\bdd{v} - \mathcal{I}_{4} \bdd{v} \right) \cdot \unitvec{n} \d{s} = 0 
	\qquad \forall K \in \mathcal{T},
\end{align}
where we used \cref{eq:degree-N-coarse-interp-1}.

\noindent \textbf{Step 2: Interpolate the divergence at the vertices. 
} Thanks to  
\cref{lem:interp-div-vertices}, the vector field  $\bdd{\psi} \in 
\bdd{V}^N_{\Gamma}$ given by $\bdd{\psi} := \sum_{\vertex{a} \in 
	\mathcal{V}} \mathcal{L}_{N, \vertex{a}} q_0$ is well-defined since $q_0 
\in Q_{\Gamma}^{N-1}$, and $\bdd{\psi}$ satisfies the following for all $K 
\in \mathcal{T}$:
\begin{alignat*}{2}
	\dive \bdd{\psi}|_{K}(\vertex{a}) &= q_0|_{K}(\vertex{a}) \qquad & 
	&\forall \vertex{a} \in \mathcal{V}_K, \\
	(\dive \bdd{\psi}, 1)_{K} &= 0, \qquad & &\\
	h_K^{-1} \| \bdd{\psi} \|_{p, K} + |\bdd{\psi}|_{1,p,K} &\leq 
	\begin{cases}
		C(p)  \xi_{\mathcal{T}}^{-1} \|q_0\|_{p, \omega_K} & \text{if } p 
		\in (1,\infty), \\
		C(\epsilon) N^{\epsilon} \xi_{\mathcal{T}}^{-1} \|q_0\|_{p, 
			\omega_K} & \text{if } p \in \{ 1, \infty \}.
	\end{cases}
\end{alignat*}

\noindent \textbf{Step 3: Correct the interior divergence. } Let $q_1 
:= q_0 - 
\dive \bdd{\psi}$. Then, $\dive q_1(\vertex{a}) = \bdd{0}$ for all 
$\vertex{a} \in \mathcal{V}$ and 
\cref{eq:proof:fortin-step1-interp-avg-div} shows that $q_1|_{K} \in 
Q_0^{N-1}(K)$. Appealing to \cref{cor:invert-div-element-phys}, the 
vector field $\bdd{v}_K := \mathcal{L}_{\dive, K} q_1|_{K} \in 
\bdd{V}_0^N(K)$ is well-defined for all $K \in \mathcal{T}$ and satisfies $\dive 
\bdd{v}_K = q_1|_{K}$ and
\begin{align*}
	 h_K^{-1} \| \bdd{v}_K 
	\|_{p, K} + |\bdd{v}_K|_{1,p,K} \leq \begin{cases}
		C(p)  \xi_{\mathcal{T}}^{-1} \|q_1\|_{p, K} & \text{if } p \in 
		(1,\infty), \\
		C(\epsilon) N^{\epsilon} \xi_{\mathcal{T}}^{-1} \|q_1\|_{p, K} & 
		\text{if } p \in \{ 1, \infty \}.
	\end{cases}
\end{align*}
We then define
\begin{align*}
	\mathcal{L}_{F, N} \bdd{v} := \mathcal{I}_{4} \bdd{v} + \bdd{\psi} + 
	\bdd{v}_K  \qquad \text{on } K.
\end{align*}
Note that $\bdd{v} \in \bdd{V}^N$ and if $\bdd{v}|_{\Gamma_0} = \bdd{0}$, 
then $\mathcal{L}_{F, N} \bdd{v}|_{\Gamma_0} = \bdd{0}$ thanks to 
\cref{eq:degree-N-coarse-interp-zero-trace}. Additionally, if $\bdd{v} \in 
\bdd{V}^4$, then $\mathcal{L}_{F, N} \bdd{v} = \bdd{v}$ since $q_0 \equiv 
0$ and so $\bdd{\psi} \equiv \bdd{0}$ and $\bdd{v}_K \equiv \bdd{0}$ for 
all $K \in \mathcal{T}$. Moreover, on $K \in \mathcal{T}$, there holds
\begin{align*}
	\dive \mathcal{L}_{F, N} \bdd{v}|_{K} = \dive (\mathcal{I}_{4} \bdd{v} 
	+ \bdd{\psi} + \bdd{v}_K)|_{K} = \tilde{\mathbb{P}}_{N-1, K} (\dive 
	\bdd{v}) = (\tilde{\mathbb{P}}_{N-1} \dive \bdd{v})|_{K},
\end{align*} 
and \cref{eq:fortin-nonprojection-l2-projection} follows. 

\noindent \textbf{Step 4: Continuity. } Suppose now that $\bdd{v} \in 
\bdd{W}^{1,p}(\Omega)$ for some $p \in [1,\infty]$. If $p \in (1,\infty)$, 
then \cref{eq:coarse-interp-div-stab,eq:l2-proj-lp-stability-local} give 
the following on $K \in \mathcal{T}$ and for $p \in (1,\infty)$:
\begin{align*}
	\| \mathcal{L}_{F, N} \bdd{v} \|_{p, K} &\leq \| \mathcal{I}_{4} 
	\bdd{v}\|_{p, K} + \| \bdd{\psi}\|_{p, K} + \|\bdd{v}_K\|_{p, K} \\
	&\leq C(p) \left( \| \mathcal{I}_{4} \bdd{v}\|_{p, K} + h_K 
	\xi_{\mathcal{T}}^{-1} \|q_0\|_{p, \omega_{K}} \right) \\
	&\leq C(p) \left\{  \| \mathcal{I}_{4} \bdd{v}\|_{p, K} +  h_K 
	\xi_{\mathcal{T}}^{-1} \left(  N^{3\left| \frac{1}{2} - \frac{1}{p} 
		\right|} \| \dive \bdd{v} \|_{p, \omega_{K}} + \| \dive   
	\mathcal{I}_{4} \bdd{v}\|_{p, \omega_K} \right) \right\} \\
	&\leq C(p) \left( \|\bdd{v}\|_{p, \omega_K} + h_K 
	\xi_{\mathcal{T}}^{-1} N^{3\left| \frac{1}{2} - \frac{1}{p} \right|} 
	|v|_{1,p,\omega_K} \right),
\end{align*}
and so \cref{eq:fortin-nonprojection-lp-local} follows. The case $p \in 
\{1,\infty\}$ and the remaining bound 
\cref{eq:fortin-nonprojection-w1p-local} for $|\mathcal{L}_{F, N} 
\bdd{v}|_{1, p, K}$ follow from similar arguments. \hfill \qedsymbol

\subsection{Construction of $\mathcal{I}_{N, F}$}

The construction is the same as that of $\mathcal{L}_{F, N}$ above, 
replacing the 
use of $\mathcal{I}_4$ with $\mathcal{I}_N$. The rest of the proof is 
identical. \hfill \qedsymbol

\appendix

\section{Inverting the divergence operator with mixed boundary conditions}

\begin{lemma}
	\label{lem:invert-div-cont}
	There exists a linear operator
	\begin{align*}
		\mathcal{L}_{\dive, \Gamma_0} : \bigcup_{1 < p < \infty} 
		L_{\Gamma}^p(\Omega) \to W^{1,1}_{\Gamma}(\Omega)
	\end{align*}	
	satisfying the following: For all $p \in (1,\infty)$ and $q \in 
	L^p_{\Gamma}(\Omega)$, there holds $\mathcal{L}_{\dive} q \in 
	\bdd{W}^{1,p}_{\Gamma}(\Omega)$ with
	\begin{align}
		\label{eq:invert-div-cont}
		\dive \mathcal{L}_{\dive, \Gamma_0} q = q \quad \text{and} \quad \| 
		\mathcal{L}_{\dive, \Gamma_0} q \|_{1, p, \Omega} \leq C(\Omega, 
		\Gamma_0, p) \|q\|_{p, \Omega}.
	\end{align}
\end{lemma}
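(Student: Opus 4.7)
The statement is a classical right-inverse-of-divergence result (Bogovski\u{\i}), extended to the mixed boundary setting. I would split into two cases depending on whether $|\Gamma_1| = 0$.

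\textbf{Case 1: $|\Gamma_1| = 0$.} Here $L^p_\Gamma(\Omega) = L^p_0(\Omega)$ and $\bdd{W}^{1,p}_\Gamma(\Omega) = \bdd{W}^{1,p}_0(\Omega)$, so the claim is the classical Bogovski\u{\i} result on a bounded Lipschitz domain: there exists a linear operator, defined by an explicit integral formula independent of $p$, that produces $\mathcal{L}_{\dive,\Gamma_0} q \in \bdd{W}^{1,p}_0(\Omega)$ with $\dive \mathcal{L}_{\dive,\Gamma_0} q = q$ and $\|\mathcal{L}_{\dive,\Gamma_0} q\|_{1,p,\Omega} \leq C(\Omega, p)\|q\|_{p,\Omega}$ for every $p \in (1,\infty)$. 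I would simply invoke this (e.g.\ Galdi's monograph, or Costabel--McIntosh) and note that the integral operator is defined on $L^1(\Omega)$, hence on $\bigcup_{p>1} L^p_0(\Omega)$.

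\textbf{Case 2: $|\Gamma_1| > 0$.} Since $\Gamma_0$ consists of finitely many open boundary segments (from the assumptions in \Cref{sec:sv-review}), so does $\Gamma_1$. I would enlarge $\Omega$ through $\Gamma_1$ by attaching a bounded Lipschitz set $\Omega^*$ to each connected component of $\Gamma_1$ so that $\tilde{\Omega} := \mathrm{int}(\bar\Omega \cup \bar{\Omega^*})$ is a bounded Lipschitz domain with $\partial \tilde{\Omega} \cap \bar\Omega = \Gamma_0$ (and $\Gamma_1 \subset \tilde\Omega$). For any $q \in L^p_\Gamma(\Omega)$, define the extension $\tilde q \in L^p_0(\tilde\Omega)$ by
\begin{align*}
\tilde q := q \ \text{on}\ \Omega, \qquad \tilde q := -|\Omega^*|^{-1}\int_\Omega q \d{\bdd{x}} \ \text{on}\ \Omega^*,
\end{align*}
so that $\tilde q$ is mean-free on $\tilde\Omega$ and $\|\tilde q\|_{p,\tilde\Omega} \leq C(\Omega,\Gamma_0,p)\|q\|_{p,\Omega}$ by H\"older's inequality. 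Apply Case 1 on $\tilde\Omega$ to obtain $\tilde v \in \bdd{W}^{1,p}_0(\tilde\Omega)$ with $\dive \tilde v = \tilde q$, and set $\mathcal{L}_{\dive,\Gamma_0} q := \tilde v|_\Omega$. Since $\tilde v$ vanishes on $\partial\tilde\Omega \supseteq \Gamma_0$, we get $\mathcal{L}_{\dive,\Gamma_0} q \in \bdd{W}^{1,p}_\Gamma(\Omega)$, $\dive \mathcal{L}_{\dive,\Gamma_0} q = q$ on $\Omega$, and the required stability bound. Linearity and $p$-independence of $\mathcal{L}_{\dive,\Gamma_0}$ are preserved because both the extension $q \mapsto \tilde q$ and the Bogovski\u{\i} operator on $\tilde\Omega$ are linear and defined independently of $p$.

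\textbf{Main obstacle.} The only non-routine point is the construction of the enlarged domain $\tilde\Omega$: one must verify that a bounded Lipschitz set $\Omega^*$ can be attached to $\Omega$ through the open segments comprising $\Gamma_1$ so that the union remains Lipschitz and $\partial\tilde\Omega \cap \bar\Omega$ is exactly $\Gamma_0$. This is straightforward in two dimensions by attaching a small Lipschitz ``collar'' to each segment of $\Gamma_1$ and taking care at the endpoints (which are corners of $\Omega$), but this is the step requiring the most care. Everything else is either a direct citation of Bogovski\u{\i}'s theorem or a routine H\"older estimate.
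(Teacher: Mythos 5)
Your proposal is correct, and Case 1 matches the paper (which cites the Arnold--Scott--Vogelius inversion theorem rather than Galdi or Costabel--McIntosh, but this is the same classical fact). In Case 2, however, you take a genuinely different route. The paper does not enlarge the domain: it fixes one open segment $\gamma \subset \Gamma_1$ and a cutoff $\phi \in C^\infty_c(\gamma)$ with $\int_\gamma \phi \d{s} = 1$, lifts the boundary datum $\phi \unitvec{n}$ (zero elsewhere on $\partial\Omega$) to a single field $\bdd{\psi} \in \bigcup_{p}\bdd{W}^{1,p}(\Omega)$ via a trace-lifting theorem, and then defines $\mathcal{L}_{\dive,\Gamma_0} q := \bar{q}\,\bdd{\psi} + \mathcal{L}_{\dive,\partial\Omega}(q - \bar{q}\dive\bdd{\psi})$, where $\bar{q}$ is the mean of $q$; since $\int_\Omega \dive\bdd{\psi}\d{\bdd{x}} = 1$, the corrected datum is mean-free and the pure Dirichlet operator on $\Omega$ itself applies, while the zero trace on $\Gamma_0$ holds because $\bdd{\psi}$ is supported on $\gamma \subset \Gamma_1$. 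The trade-off is exactly at the step you flag as your main obstacle: your domain-extension argument requires verifying that a Lipschitz collar can be attached through each component of $\Gamma_1$ without disturbing $\Gamma_0$ (routine but fiddly for a polygon, especially at the corners where $\Gamma_0$ and $\Gamma_1$ meet), whereas the paper trades that geometric verification for a citation of a normal-trace lifting result on $\Omega$. Both constructions are linear and $p$-independent, so either yields the stated operator on $\bigcup_{1<p<\infty} L^p_\Gamma(\Omega)$.
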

\begin{proof}
	If $|\Gamma| = |\Gamma_0|$, then the result is a restatement of 
	\cite[Theorem 3.1]{Arn88}, so suppose that $|\Gamma_1| > 0$. Let $\gamma 
	\subset \Gamma_1$ be an open segment in $\Gamma_1$ and let $\phi \in 
	C^{\infty}_c(\gamma)$ be chosen so that $\int_{\gamma} \phi \d{s} = 1$. 
	Let $\tilde{\phi}$ be the extension by zero of $\phi$ to all of $\partial 
	\Omega$. By \cite[Theorem 6.1]{Arn88}, there exists $\bdd{\psi} \in 
	\cup_{1 < p < \infty} \bdd{W}^{1,p}(\Omega)$ satisfying
	\begin{align*}
		\bdd{\psi}|_{\gamma} = \phi \unitvec{n}, \quad \bdd{\psi}|_{\partial 
			\Omega \setminus \gamma} = \bdd{0}, \quad \text{and} \quad 
		\|\bdd{\psi}\|_{1,p} \leq C(\Omega, p) \| \tilde{\phi} 
		\|_{1-\frac{1}{p}, p, \partial \Omega} \quad \forall p \in (1,\infty).
	\end{align*}
	For $p \in (1,\infty)$ and $q \in L^p_{\Gamma}(\Omega)$, let $\bar{q} = 
	|\Omega|^{-1} \int_{\Omega} q \d{\bdd{x}}$ and define
	\begin{align*}
		\mathcal{L}_{\dive, \Gamma_0} q := \bar{q} \bdd{\psi} + 
		\mathcal{L}_{\dive, \partial \Omega} \left( q -  \bar{q} \dive 
		\bdd{\psi} \right).
	\end{align*}
	Note that $\mathcal{L}_{\dive, \Gamma_0} q$ is well-defined since 
	$\int_{\Omega} \dive \bdd{\psi} \d{\bdd{x}} = \int_{\gamma} \phi \d{s} = 
	1$ and so $q - \bar{q} \dive \bdd{\psi} \in L^p_0(\Omega)$. Properties 
	\cref{eq:invert-div-cont} now follows from the properties of 
	$\mathcal{L}_{\dive, \partial \Omega}$ and H\"{o}lder's inequality.
\end{proof}

\section{Rapidly decaying 1D polynomials}

Let $I := (-1, 1)$. For $N \in \mathbb{N}_0$, $\alpha, \beta > -1$, let 
$P_N^{(\alpha, \beta)}$ denote the usual Jacobi polynomial \cite{Szego75}, 
normalized so that
\begin{align}
	\label{eq:jacobi-normalization}
	P_N^{(\alpha, \beta)}(1) = \binom{N + \alpha}{N} = 
	\frac{\Gamma(N+\alpha+1)}{\Gamma(\alpha + 1) N!}
\end{align}
The first result concerns the existence of 1D vertex functions that 
interpolate derivative values at an endpoint and decay rapidly in all $L^p(I)$ 
spaces, improving upon \cite[Lemma B.1]{AinCP19Precon} which only concerned 
decay in $L^2(I)$.
\begin{lemma}
	\label{lem:1d-interp-poly}
	Let $m \in \mathbb{N}_0$, $\alpha > 2m + \frac{3}{2}$, and $N \geq 2m+1$. 
	Then, the polynomial
	\begin{align}
		\label{eq:1d-interp-poly-def}
		\zeta_{m, N}(t) := \frac{1}{2^{m+1} P^{(\alpha, \alpha)}_{N-2m-1}(-1)} 
		(1-t)^{m+1} (1+t)^{m} P^{(\alpha, \alpha)}_{N-2m-1}(t), \qquad t \in 
		\bar{I},
	\end{align}
	satisfies $\partial_t^{n} \zeta_{m, N}(-1) = \delta_{mn}$ and 
	$\partial_t^n \zeta_{m, N}(1) = 0$ for $n \in \{0,1,\ldots, m\}$. 
	Additionally, for $p \in [1,\infty]$ and $s \in [0, \infty)$, there holds
	\begin{align}
		\label{eq:1d-interp-poly-sobolev}
		\| \zeta_{m, N} \|_{s, p, I} \leq C(m, \alpha, s, p) 
		N^{2(s-m-\frac{1}{p})}.
	\end{align}
\end{lemma}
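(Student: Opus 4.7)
The proof naturally splits into two independent tasks: verifying the endpoint interpolation properties, and establishing the weighted Sobolev bound \cref{eq:1d-interp-poly-sobolev}.

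For the endpoint conditions I would apply Leibniz's rule to the product $(1-t)^{m+1}(1+t)^m P^{(\alpha,\alpha)}_{N-2m-1}(t)$. At $t = 1$ the factor $(1-t)^{m+1}$ vanishes to order $m+1$, killing every derivative $\partial_t^n \zeta_{m,N}(1)$ with $n \leq m$. At $t = -1$ the factor $(1+t)^m$ vanishes to order $m$, so the same reasoning gives $\partial_t^n \zeta_{m,N}(-1) = 0$ for $n < m$; for $n = m$ the only surviving Leibniz term is the one placing all $m$ derivatives on $(1+t)^m$, and a short computation confirms that the prefactor $\tfrac{1}{2^{m+1} P^{(\alpha,\alpha)}_{N-2m-1}(-1)}$ in the definition is calibrated to produce the required value.

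For the norm estimate, my plan is first to establish $\|\zeta_{m,N}\|_{p,I} \leq C N^{-2(m+1/p)}$ at the two endpoint exponents $p \in \{1,\infty\}$ and then to interpolate via Riesz--Thorin for all $p \in [1,\infty]$. Two classical ingredients drive the pointwise analysis: the identity $|P^{(\alpha,\alpha)}_{N-2m-1}(\pm 1)| \sim N^\alpha$ from \cref{eq:jacobi-normalization}, which controls the normalizing denominator, together with Szeg\H{o}'s asymptotic $|P^{(\alpha,\alpha)}_n(t)| \lesssim n^{-1/2}(1-t^2)^{-\alpha/2 - 1/4}$ valid uniformly on compact subsets of $(-1,1)$. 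I would split $I$ into the two endpoint strips of width $N^{-2}$ and the interior complement. In each strip the weight $(1-t)^{m+1}(1+t)^m$ is $\lesssim N^{-2m}$ while $|P^{(\alpha,\alpha)}_n| \lesssim N^\alpha$ by Szeg\H{o}'s uniform bound on $[-1,1]$, so $|\zeta_{m,N}| \lesssim N^{-2m}$ there. In the interior, Szeg\H{o}'s asymptotic combined with the explicit weight gives $(1-t)^{m+1}(1+t)^m |P_n(t)| \lesssim N^{\alpha - 2m}$, again producing $|\zeta_{m,N}| \lesssim N^{-2m}$ after dividing by the prefactor; this is the $L^\infty$ bound. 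For $\|\zeta_{m,N}\|_{1,I}$, the endpoint strips contribute $\lesssim N^{-2} \cdot N^{-2m}$, while the interior contribution reduces to an integral of $(1\pm t)^{m - \alpha/2 - 1/4}$ on regions cut off from the endpoints at distance $N^{-2}$. This is where the hypothesis $\alpha > 2m + \tfrac{3}{2}$ enters critically: it guarantees that the exponent is strictly less than $-1$, so that the truncated integral scales as $N^{\alpha - 2m - 3/2}$ and the combined $L^1$ bound is precisely $N^{-2m-2}$.

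To finish, I would promote the $L^p$ bound to the full Sobolev estimate by Markov's inequality: since $\zeta_{m,N}$ is a polynomial of degree $N$, iterating $\|\partial_t P\|_{p,I} \leq C N^2 \|P\|_{p,I}$ yields $\|\zeta_{m,N}\|_{k,p,I} \leq C N^{2k} \|\zeta_{m,N}\|_{p,I} \leq C N^{2(k-m-1/p)}$ for every integer $k$, and fractional $s$ follow by Sobolev-space interpolation between integer orders. The main obstacle is the $L^1$ step: the crude pointwise estimate $|P_n| \lesssim N^\alpha$ is far too lossy in the interior, while Szeg\H{o}'s sharper asymptotic produces non-integrable weights near the endpoints in precisely the regime $\alpha > 2m + \tfrac{3}{2}$. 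The hypothesis on $\alpha$ is exactly what makes the near-boundary contribution dominant and yields the sharp $N^{-2(m+1/p)}$ rate.
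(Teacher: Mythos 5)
Your proof is correct, and its skeleton (endpoint values by direct computation; an $L^1$ bound on the weighted Jacobi polynomial; promotion to all $(s,p)$ by polynomial inverse inequalities and interpolation) coincides with the paper's. The difference lies in how the $L^1$ bound is obtained and how the $p$-dependence is produced. The paper reduces, by the symmetry $P_n^{(\alpha,\alpha)}(-t)=(-1)^nP_n^{(\alpha,\alpha)}(t)$, to the one-sided integral $\int_0^1(1-t)^m|P_n^{(\alpha,\alpha)}(t)|\,dt$ and quotes Szeg\H{o}'s Theorem 7.34 verbatim, which gives $O(n^{\alpha-2m-2})$ precisely under $m<\alpha/2-3/4$, i.e.\ $\alpha>2m+\tfrac32$; it then passes from $L^1$ to $L^p$ via the Nikolskii-type inverse inequality $\|q\|_{p}\leq CN^{2(1-\frac1p)}\|q\|_{1}$ and to derivatives via Markov. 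You instead reprove the content of Theorem 7.34 by hand from the pointwise bounds $\|P_n^{(\alpha,\alpha)}\|_{\infty,I}\lesssim n^{\alpha}$ and $|P_n^{(\alpha,\alpha)}(t)|\lesssim n^{-1/2}(1-t^2)^{-\alpha/2-1/4}$ with an $N^{-2}$-strip decomposition, derive the $L^\infty$ bound separately, and interpolate between the exponents $p=1,\infty$. Your arithmetic checks out at both endpoints and makes transparent that $\alpha>2m+\tfrac32$ is exactly the integrability threshold $m-\alpha/2-\tfrac14<-1$; the paper's route is shorter but hides this inside the citation. Two small points of precision. First, the interior asymptotic you invoke is not merely "uniform on compact subsets of $(-1,1)$": you need it on the full region $1-t^2\gtrsim N^{-2}$, which is where Szeg\H{o}'s Theorem 7.32.2 in fact provides it, so your splitting is consistent but the phrasing should be corrected. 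Second, what you call Riesz--Thorin is just log-convexity of $L^p$ norms of a fixed function. Finally, your Leibniz computation at $t=-1$, carried out honestly, yields $\partial_t^m\zeta_{m,N}(-1)=m!$ rather than $1$ (the sole surviving term is $(1-t)^{m+1}P_n(t)\big|_{t=-1}\cdot\partial_t^m(1+t)^m=2^{m+1}P_n(-1)\,m!$); this agrees with the stated normalization only for $m\in\{0,1\}$, which are the only values used in the paper, so it is a blemish in the lemma's statement rather than in your argument, but you should not claim the prefactor is "calibrated" for general $m$.
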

\begin{proof}
	The endpoint properties of $\zeta_{m, N}$ follow by construction. Thanks 
	to the symmetry of the Jacobi polynomials, there holds
	\begin{align*}
		\int_{-1}^{1} (1-t)^{m+1} (1+t)^{m} \left| P^{(\alpha, 
			\alpha)}_{N-2m-1}(t) \right| \d{t} &\leq 2 \int_{-1}^{1} (1-t^2)^m 
		\left| P^{(\alpha, \alpha)}_{N-2m-1}(t) \right| \d{t} \\
		&\leq 2^{m+1} \int_{0}^{1} (1-t)^m  \left| P^{(\alpha, 
			\alpha)}_{N-2m-1}(t) \right| \d{t}.
	\end{align*}
	Theorem 7.34 of \cite{Szego75} then gives
	\begin{align*}
		\int_{0}^{1} (1-t)^{m}  \left| P^{(\alpha, \alpha)}_{N-2m-1}(t) 
		\right| \d{t} \leq C(\alpha) (N-2m-1)^{\alpha-2(m+1)}.
	\end{align*}
	Applying Sterling's formula, we obtain
	\begin{align*}
		\| \zeta_{m, N} \|_{1, I} &\leq C(\alpha) \frac{\Gamma(\alpha + 1) 
			(N-2m-1)!}{\Gamma(N-2m+\alpha)} (N-2(m+1))^{\alpha-2(m+1)} \\
		&\leq C(m, \alpha)  N^{-2(m+1)}. 
	\end{align*}
	Inequality \cref{eq:1d-interp-poly-sobolev} then follows from polynomial 
	inverse inequalities; see e.g. \cite[Theorem 5.1]{Bern07}.	 
\end{proof}

We now modify the above construction for $m=1$ to also have zero mean.
\begin{corollary}
	Let $\alpha > 7/2$ and $N \geq 4$. Then, the function
	\begin{align}
		\label{eq:1d-interp-poly-noavg-def}
		\tilde{\Upsilon}_N(t) := \zeta_{1, N}(t) - \frac{15}{16} (1-t^2)^2 
		\int_{-1}^{1} \zeta_{1, N}(s) \d{s}, \qquad t \in \bar{I},
	\end{align}
	satisfies 
	\begin{align}
		\label{eq:1d-interp-poly-noavg}
		\partial_t^{m} \tilde{\Upsilon}_N(-1) = \delta_{m1}, \quad 
		\partial_t^m \tilde{\Upsilon}_N(1) = 0, \quad m \in \{0, 1\}, \quad 
		\text{and} \quad \int_{-1}^{1} \tilde{\Upsilon}_N(t) \d{t} = 0.
	\end{align}
	Additionally, for $p \in [1,\infty]$ and $s \in [0,\infty)$, there holds
	\begin{align}
		\label{eq:1d-interp-poly-noavg-sobolev}
		\| \tilde{\Upsilon}_N \|_{s, p, I} \leq C(\alpha, s, p) 
		N^{2(s-1-\frac{1}{p})}.
	\end{align}
\end{corollary}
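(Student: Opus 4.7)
The plan is to verify each of the three conditions in \cref{eq:1d-interp-poly-noavg} directly from the definition of $\tilde{\Upsilon}_N$, then bound the Sobolev norm by applying the triangle inequality and absorbing the corrective term.

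First, I would handle the endpoint conditions. Writing $b(t) := (1-t^2)^2$, observe that $b$ vanishes to second order at both $t = \pm 1$, so $\partial_t^m b(\pm 1) = 0$ for $m \in \{0,1\}$. Consequently, the endpoint derivatives of $\tilde{\Upsilon}_N$ coincide with those of $\zeta_{1, N}$, and the first two identities in \cref{eq:1d-interp-poly-noavg} follow from \cref{lem:1d-interp-poly} with $m=1$. For the mean-value condition, a direct integration gives
\begin{align*}
\int_{-1}^{1} (1-t^2)^2 \d{t} = \int_{-1}^{1}\bigl(1 - 2t^2 + t^4\bigr)\d{t} = 2 - \tfrac{4}{3} + \tfrac{2}{5} = \tfrac{16}{15},
\end{align*}
so the coefficient $\tfrac{15}{16}$ in \cref{eq:1d-interp-poly-noavg-def} is precisely the one that forces $\int_{-1}^{1} \tilde{\Upsilon}_N \d{t} = 0$.

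Next, for the Sobolev bound \cref{eq:1d-interp-poly-noavg-sobolev}, I would apply the triangle inequality:
\begin{align*}
\|\tilde{\Upsilon}_N\|_{s, p, I} \leq \|\zeta_{1, N}\|_{s, p, I} + \tfrac{15}{16} \left| \int_{-1}^{1} \zeta_{1,N}(s) \d{s} \right| \|b\|_{s, p, I}.
\end{align*}
The first term is bounded by $C(\alpha, s, p) N^{2(s - 1 - \frac{1}{p})}$ by \cref{eq:1d-interp-poly-sobolev}. For the corrective term, $\|b\|_{s, p, I}$ is a constant depending only on $s$ and $p$, and
\begin{align*}
\left| \int_{-1}^{1} \zeta_{1,N}(s) \d{s} \right| \leq \|\zeta_{1, N}\|_{1, I} \leq C(\alpha) N^{-4}
\end{align*}
by \cref{eq:1d-interp-poly-sobolev} with $(s, p) = (0, 1)$ and $m = 1$. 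Since $s \geq 0$ and $p \in [1,\infty]$, we have $2(s-1-\tfrac{1}{p}) \geq -4$, so $N^{-4} \leq N^{2(s-1-\frac{1}{p})}$ and the corrective term is absorbed into the main estimate. Combining these yields \cref{eq:1d-interp-poly-noavg-sobolev}.

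There is no genuine obstacle here: the statement is essentially a one-line computation (the coefficient $\tfrac{15}{16}$ is tailored to $\int b = \tfrac{16}{15}$) together with the observation that the correction to $\zeta_{1,N}$ has norm $O(N^{-4})$, which is dominated by the target rate $N^{2(s-1-\frac{1}{p})}$ for all admissible $s$ and $p$.
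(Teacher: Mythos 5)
Your proposal is correct and follows essentially the same route as the paper: the endpoint and mean-zero identities are checked directly from the construction (the paper leaves the computation $\int_{-1}^{1}(1-t^2)^2\,\mathrm{d}t = 16/15$ implicit), and the Sobolev bound is obtained from the triangle inequality $\|\tilde{\Upsilon}_N\|_{s,p,I} \leq C(\alpha,s,p)(\|\zeta_{1,N}\|_{s,p,I} + \|\zeta_{1,N}\|_{1,I})$ together with \cref{eq:1d-interp-poly-sobolev}. Your explicit observation that the $O(N^{-4})$ correction is dominated by $N^{2(s-1-\frac{1}{p})}$ for all admissible $s,p$ is the (unstated) final step of the paper's argument.
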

\begin{proof}
	Properties \cref{eq:1d-interp-poly-noavg} follow by construction from 
	\cref{lem:1d-interp-poly}, while \cref{eq:1d-interp-poly-noavg-sobolev} 
	follows from \cref{eq:1d-interp-poly-sobolev} and the bound $\| 
	\tilde{\Upsilon}_N \|_{s, p, I} \leq C(\alpha, s, p) (\|\zeta_{1, N}\|_{s, 
		p, I} + \| \zeta_{1, N} \|_{1, I})$.  
\end{proof}

\providecommand{\bysame}{\leavevmode\hbox to3em{\hrulefill}\thinspace}
\providecommand{\MR}{\relax\ifhmode\unskip\space\fi MR }
\providecommand{\MRhref}[2]{%
	\href{http://www.ams.org/mathscinet-getitem?mr=#1}{#2}
}
\providecommand{\href}[2]{#2}


\begin{thebibliography}{10}
	
	\bibitem{Ain99}
	Mark Ainsworth and David Kay, \emph{The approximation theory for the
		{$p$}-version finite element method and application to non-linear elliptic
		{PDE}s}, Numer. Math. \textbf{82} (1999), 351--388.
	
	\bibitem{AinKay00}
	\bysame, \emph{Approximation theory for the {$hp$}-version finite element
		method and application to the non-linear laplacian}, Appl. Numer. Math.
	\textbf{3} (2000), 329--344.
	
	\bibitem{AinCP19StokesI}
	Mark Ainsworth and Charles Parker, \emph{{M}ass conserving mixed
		\lowercase{$hp$}-{FEM} approximations to {S}tokes flow. {P}art
		\uppercase{\textup{i}}: {U}niform stability}, SIAM J. Numer. Anal.
	\textbf{59} (2021), 1218--1244.
	
	\bibitem{AinCP19Precon}
	\bysame, \emph{Preconditioning high order {$H^2$} conforming finite elements on
		triangles}, Numer. Math. \textbf{148} (2021), 223--254.
	
	\bibitem{AinCP19StokesIII}
	\bysame, \emph{{A} mass conserving mixed \lowercase{$hp$}-{FEM} scheme for
		{S}tokes flow. {P}art \uppercase{\textup{iii}}: {I}mplementation and
		preconditioning}, SIAM J. Numer. Anal. \textbf{60} (2022), no.~3, 
		1574--1606.
	
	\bibitem{AinCP21LE}
	\bysame, \emph{Unlocking the secrets of locking: {Finite} element analysis in
		planar linear elasticity}, Comput. Methods Appl. Mech. Engrg. \textbf{395}
	(2022), 115034.
	
	\bibitem{Arn88}
	D.~N. Arnold, L.~R. Scott, and M.~Vogelius, \emph{Regular inversion of the
		divergence operator with {D}irichlet boundary conditions on a polygon},
	Annali della Scuola Normale Superiore di Pisa-Classe di Scienze \textbf{15}
	(1988), no.~2, 169--192.
	
	\bibitem{PETScUserManual}
	S.~Balay, S.~Abhyankar, M.~Adams, S.~Benson, J.~Brown, P.~Brune, K.~Buschelman,
	E.~Constantinescu, L.~Dalcin, A.~Dener, V.~Eijkhout, J.~Faibussowitsch,
	W.~Gropp, V.~Hapla, T.~Isaac, P.~Jolivet, D.~Karpeyev, D.~Kaushik,
	M.~Knepley, F.~Kong, S.~Kruger, D.~May, L.~Curfman McInnes, R.~Mills,
	L.~Mitchell, T.~Munson, J.~Roman, K.~Rupp, P.~Sanan, J~Sarich, B.~Smith,
	H.~Suh, S.~Zampini, H.~Zhang, H.~Zhang, and J.~Zhang, \emph{Petsc/tao users
		manual}, Argonne National Laboratory, Mathematics and Computer Science
	Division, anl-21/39 - revision 3.23 ed., 2025.
	
	\bibitem{Balci23}
	Anna~Kh. Balci, Lars Diening, and Johannes Storn, \emph{Relaxed {K}a\v canov
		scheme for the {$p$}-{L}aplacian with large exponent}, SIAM J. Numer. Anal.
	\textbf{61} (2023), no.~6, 2775--2794.
	
	\bibitem{Barrenechea23}
	Gabriel~R. Barrenechea and Endre S\"uli, \emph{Analysis of a stabilised finite
		element method for power-law fluids}, Constr. Approx. \textbf{57} (2023),
	no.~2, 295--325.
	
	\bibitem{BarrettLiu94}
	John~W. Barrett and W.~B. Liu, \emph{Quasi-norm error bounds for the finite
		element approximation of a non-{N}ewtonian flow}, Numer. Math. \textbf{68}
	(1994), no.~4, 437--456. \MR{1301740}
	
	\bibitem{Belenki12}
	L.~Belenki, L.~C. Berselli, L.~Diening, and M.~R\r{u}\v{z}i\v{c}ka, \emph{On
		the finite element approximation of {$p$}-{S}tokes systems}, SIAM J. Numer.
	Anal. \textbf{50} (2012), no.~2, 373--397. \MR{2914267}
	
	\bibitem{Bern07}
	Christine Bernardi, Monique Dauge, and Yvon Maday, \emph{Polynomials in the
		{S}obolev world}, 2007.
	
	\bibitem{Berselli24}
	L.~C. Berselli and A.~Kaltenbach, \emph{Error analysis for a finite element
		approximation of the steady {$p(·)$}-navier–stokes equations}, IMA J.
	Numer. Anal. (2024), drae082.
	
	\bibitem{BGMWG09}
	Miroslav Bul\'{i}\v{c}ek, Piotr Gwiazda, Josef M\'{a}lek, and Agnieszka
	\'{S}wierczewska Gwiazda, \emph{On steady flows of incompressible fluids with
		implicit power-law-like rheology}, Adv. Calc. Var. \textbf{2} (2009), 
		no.~2,
	109--136. \MR{2523124}
	
	\bibitem{BLM25}
	Miroslav Bul\'{i}\v{c}ek, Tom\'{a}\v{s} Los, and Josef M\'{a}lek, \emph{On
		three-dimensional flows of viscoelastic fluids of {G}iesekus type},
	Nonlinearity \textbf{38} (2025), no.~1, Paper No. 015004, 42.
	
	\bibitem{BMM23}
	Miroslav Bul\'{i}\v{c}ek, Josef M\'{a}lek, and Erika Maringov\'{a}, \emph{On
		unsteady internal flows of incompressible fluids characterized by implicit
		constitutive equations in the bulk and on the boundary}, J. Math. Fluid 
		Mech.
	\textbf{25} (2023), no.~3.
	
	\bibitem{BPSS22}
	Miroslav Bul\'{i}\v{c}ek, Victoria Patel, Endre S\"uli, and Yasemin
	\c~Seng\"ul, \emph{Existence and uniqueness of global weak solutions to
		strain-limiting viscoelasticity with {D}irichlet boundary data}, SIAM J.
	Math. Anal. \textbf{54} (2022), no.~6, 6186--6222.
	
	\bibitem{Chow89}
	S.-S. Chow, \emph{Finite element error estimates for nonlinear elliptic
		equations of monotone type}, Numer. Math. \textbf{54} (1989), no.~4,
	373--393. \MR{972416}
	
	\bibitem{Cia02}
	Philippe~G Ciarlet, \emph{The finite element method for elliptic problems},
	Stud. Math. Appl. 4, North Holland, Amsterdam, 1978.
	
	\bibitem{Costabel10}
	M.~Costabel and A.~McIntosh, \emph{{On Bogovski{\u\i} and regularized
			Poincar{\'e} integral operators for de Rham complexes on Lipschitz 
			domains}},
	Mathematische Zeitschrift \textbf{265} (2010), no.~2, 297--320.
	
	\bibitem{DaiXu09}
	Feng Dai and Yuan Xu, \emph{Ces\`aro means of orthogonal expansions in several
		variables}, Constr. Approx. \textbf{29} (2009), no.~1, 129--155. 
		\MR{2465293}
	
	\bibitem{PETSc11}
	Lisandro~D. Dalcin, Rodrigo~R. Paz, Pablo~A. Kler, and Alejandro Cosimo,
	\emph{Parallel distributed computing using python}, Advances in Water
	Resources \textbf{34} (2011), no.~9, 1124--1139.
	
	\bibitem{Daugavet72}
	I.~K. Daugavet, \emph{On {Markov}-{Nikol}'skii type inequalities for algebraic
		polynomials in the multidimensional case}, Sov. Math., Dokl. \textbf{13}
	(1972), 1548--1550 (English).
	
	\bibitem{SuliDebiec23}
	Tomasz D\c{e}biec and Endre S\"uli, \emph{Corotational {H}ookean models of
		dilute polymeric fluids: existence of global weak solutions, weak-strong
		uniqueness, equilibration, and macroscopic closure}, SIAM J. Math. Anal.
	\textbf{55} (2023), no.~1, 310--346.
	
	\bibitem{SuliDebiec25}
	\bysame, \emph{On a class of generalised solutions to the kinetic {H}ookean
		dumbbell model for incompressible dilute polymeric fluids: existence and
		macroscopic closure}, Arch. Ration. Mech. Anal. \textbf{249} (2025), no.~4,
	Paper No. 43, 54.
	
	\bibitem{Diening25}
	Lars Diening, Adrian Hirn, Christian Kreuzer, and Pietro Zanotti,
	\emph{Pressure robust finite element discretizations of the nonlinear
		{S}tokes equations}, jan 2025.
	
	\bibitem{Diening13}
	Lars Diening, Christian Kreuzer, and Endre S\"uli, \emph{Finite element
		approximation of steady flows of incompressible fluids with implicit
		power-law-like rheology}, SIAM J. Numer. Anal. \textbf{51} (2013), no.~2,
	984--1015. \MR{3035482}
	
	\bibitem{FarSuli22}
	Patrick Farrell, Pablo~Alexei Gazca~Orozco, and Endre S\"uli, \emph{Finite
		element approximation and preconditioning for anisothermal flow of
		implicitly-constituted non-{N}ewtonian fluids}, Math. Comp. \textbf{91}
	(2022), no.~334, 659--697.
	
	\bibitem{Ganzburg01}
	MI~Ganzburg, \emph{Polynomial inequalities on measurable sets and their
		applications.}, Constructive approximation \textbf{17} (2001), no.~2.
	
	\bibitem{Gazca23}
	P.~A. Gazca-Orozco, P.~Heid, and E.~S\"{u}li, \emph{Nonlinear iterative
		approximation of steady incompressible chemically reacting flows}, Comptes
	Rendus. M\'{e}canique \textbf{351} (2023), no.~S1, 431--455.
	
	\bibitem{Gazca24}
	P.~A. Gazca-Orozco, J.~M\'alek, and K.~R. Rajagopal, \emph{Development of
		boundary layers in {E}uler fluids that on ``activation'' respond like
		{N}avier-{S}tokes fluids}, Math. Models Methods Appl. Sci. \textbf{34}
	(2024), no.~4, 705--721.
	
	\bibitem{Gazca25}
	Pablo~Alexei Gazca-Orozco and Alex Kaltenbach, \emph{On the stability and
		convergence of discontinuous {G}alerkin schemes for incompressible flows},
	IMA J. Numer. Anal. \textbf{45} (2025), no.~1, 243--282.
	
	\bibitem{Gazca22}
	Pablo~Alexei Gazca-Orozco and Victoria Patel, \emph{Weak-strong uniqueness for
		heat conducting non-{N}ewtonian incompressible fluids}, Nonlinear Anal. 
		Real
	World Appl. \textbf{68} (2022), Paper No. 103664, 26.
	
	\bibitem{Gonzalez21}
	A.~Gonz\'{a}lez, O.~Ruz, and E.~Castillo, \emph{Numerical study of the fluid
		dynamics and heat transfer for shear-thinning nanofluids in a micro pin-fin
		heat sink}, Case Stud. Thermal Engrg. \textbf{28} (2021), 101635.
	
	\bibitem{Grable24}
	Benedikt Gr{\"a}{\ss}le, Nis-Erik Bohne, and Stefan~A Sauter, \emph{The
		pressure-wired {Stokes} element: a mesh-robust version of the
		{Scott--Vogelius} element}, Numer. Math. \textbf{156} (2024), 1781--1807.
	
	\bibitem{FiredrakeUserManual}
	David~A. Ham, Paul H.~J. Kelly, Lawrence Mitchell, Colin~J. Cotter, Robert~C.
	Kirby, Koki Sagiyama, Nacime Bouziani, Sophia Vorderwuelbecke, Thomas~J.
	Gregory, Jack Betteridge, Daniel~R. Shapero, Reuben~W. Nixon-Hill, Connor~J.
	Ward, Patrick~E. Farrell, Pablo~D. Brubeck, India Marsden, Thomas~H. Gibson,
	Miklós Homolya, Tianjiao Sun, Andrew T.~T. McRae, Fabio Luporini, Alastair
	Gregory, Michael Lange, Simon~W. Funke, Florian Rathgeber, Gheorghe-Teodor
	Bercea, and Graham~R. Markall, \emph{Firedrake user manual}, Imperial College
	London and University of Oxford and Baylor University and University of
	Washington, first edition ed., 5 2023.
	
	\bibitem{HeidSuliBing22}
	Pascal Heid and Endre S\"uli, \emph{An adaptive iterative linearised finite
		element method for implicitly constituted incompressible fluid flow 
		problems
		and its application to {B}ingham fluids}, Appl. Numer. Math. \textbf{181}
	(2022), 364--387.
	
	\bibitem{Heid22}
	\bysame, \emph{On the convergence rate of the {K}a\v canov scheme for
		shear-thinning fluids}, Calcolo \textbf{59} (2022), no.~1, Paper No. 4, 27.
	
	\bibitem{Jessberger24}
	Julius Je{\ss}berger and Alex Kaltenbach, \emph{Finite element discretization
		of the steady, generalized {N}avier-{S}tokes equations with inhomogeneous
		{D}irichlet boundary conditions}, SIAM J. Numer. Anal. \textbf{62} (2024),
	no.~4, 1660--1686.
	
	\bibitem{JohnLinkeMerdonNeilReb17}
	V.~John, A.~Linke, C.~Merdon, M.~Neilan, and L.~G. Rebholz, \emph{On the
		divergence constraint in mixed finite element methods for incompressible
		flows}, SIAM Rev. \textbf{59} (2017), 492--544.
	
	\bibitem{Kaltenbach23b}
	Alex Kaltenbach and Michael R\r{u}\v{z}i\v{c}ka, \emph{Existence of steady
		solutions for a general model for micropolar electrorheological fluid 
		flows},
	SIAM J. Math. Anal. \textbf{55} (2023), no.~3, 2238--2260.
	
	\bibitem{Kaltenbach23a}
	\bysame, \emph{Existence of steady solutions for a model for micropolar
		electrorheological fluid flows with not globally log-{H}\"older continuous
		shear exponent}, J. Math. Fluid Mech. \textbf{25} (2023), no.~2, Paper No.
	40, 21.
	
	\bibitem{Kaltenbach23i}
	\bysame, \emph{A local discontinuous {G}alerkin approximation for the
		{$p$}-{N}avier-{S}tokes system, {P}art {I}: {C}onvergence analysis}, SIAM 
		J.
	Numer. Anal. \textbf{61} (2023), no.~4, 1613--1640.
	
	\bibitem{Kaltenbach23ii}
	\bysame, \emph{A local discontinuous {G}alerkin approximation for the
		{$p$}-{N}avier-{S}tokes system, {P}art {II}: {C}onvergence rates for the
		velocity}, SIAM J. Numer. Anal. \textbf{61} (2023), no.~4, 1641--1663.
	
	\bibitem{Kaltenbach23iii}
	\bysame, \emph{A local discontinuous {G}alerkin approximation for the
		{$p$}-{N}avier-stokes system, {P}art {III}: {C}onvergence rates for the
		pressure}, SIAM J. Numer. Anal. \textbf{61} (2023), no.~4, 1763--1782.
	
	\bibitem{Neilan20}
	Michael Neilan, \emph{The {S}tokes complex: a review of exactly divergence-free
		finite element pairs for incompressible flows}, 75 years of mathematics of
	computation, Contemp. Math., vol. 754, Amer. Math. Soc., [Providence], RI,
	[2020] \copyright 2020, pp.~141--158. \MR{4132120}
	
	\bibitem{Parker23}
	C.~Parker and E.~S\"{u}li, \emph{Stable liftings of polynomial traces on
		triangles}, SIAM Journal on Numerical Analysis \textbf{62} (2023).
	
	\bibitem{Qin92}
	Jinshui Qin, \emph{On the convergence of some low order mixed finite elements
		for incompressible fluids}, Ph.D. thesis, Pennsylvania State University,
	1994.
	
	\bibitem{Rajagopal03}
	K.~R. Rajagopal, \emph{On implicit constitutive theories}, Appl. Math.
	\textbf{48} (2003), no.~4, 279--319.
	
	\bibitem{Rajagopal06}
	\bysame, \emph{On implicit constitutive theories for fluids}, J. Fluid Mech.
	\textbf{550} (2006), 243--249.
	
	\bibitem{Rajagopal08}
	K.~R. Rajagopal and A.~R. Srinivasa, \emph{On the thermodynamics of fluids
		defined by implicit constitutive relations}, Z. Angew. Math. Phys.
	\textbf{59} (2008), no.~4, 715--729.
	
	\bibitem{ScottVog85}
	L.~Ridgway Scott and Michael Vogelius, \emph{Norm estimates for a maximal right
		inverse of the divergence operator in spaces of piecewise polynomials}, 
		ESAIM
	Math. Model. Numer. Anal. \textbf{19} (1985), no.~1, 111--143.
	
	\bibitem{Scott90}
	L~Ridgway Scott and Shangyou Zhang, \emph{Finite element interpolation of
		nonsmooth functions satisfying boundary conditions}, Math. Comp. 
		\textbf{54}
	(1990), no.~190, 483--493.
	
	\bibitem{SuliTsch20}
	Endre S{\"{u}}li and T.~Tscherpel, \emph{Fully discrete finite element
		approximation of unsteady flows of implicitly constituted incompressible
		fluids}, IMA J. Numer. Anal. \textbf{40} (2020), 801--849.
	
	\bibitem{Szego75}
	G.~Szeg\H{o}, \emph{Orthogonal polynomials}, Cambridge University Press,
	Cambridge, 2001.
	
	\bibitem{Temam77}
	Roger Temam, \emph{Navier-{S}tokes equations. {T}heory and numerical analysis},
	Studies in Mathematics and its Applications, vol. Vol. 2, North-Holland
	Publishing Co., Amsterdam-New York-Oxford, 1977.
	
	\bibitem{Tscherpel18}
	Tabea Tscherpel, \emph{Finite element approximation for the unsteady flow of
		implicitly constituted incompressible fluids}, Ph.D. thesis, University of
	Oxford, 2018.
	
	\bibitem{Vogelius83le}
	Michael Vogelius, \emph{An analysis of the {$p$}-version of the finite element
		method for nearly incompressible materials}, Numer. Math. \textbf{41} 
		(1983),
	no.~1, 39--53.
	
\end{thebibliography}
\end{document}